\newcommand{\g}{{\mathfrak{g}}}
\newcommand{\sll}{{\mathfrak{sl}_{n+1}}}
\newcommand{\sln}{{\mathfrak{sl}_{n}}}
\newcommand{\so}{{\mathfrak{so}_{2n}}}
\newcommand{\soo}{{\mathfrak{so}_{2n+1}}}
\newcommand{\spp}{{\mathfrak{sp}_{2n}}}
\newcommand{\und}[1]{{\underline{#1}}}
\newcommand{\llambda}{{\bar{\lambda}}}
\newcommand{\lmu}{{\bar{\mu}}}
\newcommand{\brak}[1]{\langle #1\rangle}
\newcommand{\n}{\noindent}
\DeclareMathOperator{\amod}{\mbox{-}mod}
\DeclareMathOperator{\prmod}{\mbox{-}pmod}
\DeclareMathOperator{\ext}{ext}
\DeclareMathOperator{\gdim}{gdim}
\DeclareMathOperator{\seq}{Seq}
\newcommand{\figins}[3] 
{\raisebox{#1pt}{\includegraphics[height=#2 in]{figs/#3}}}
\newtheorem{thm}{Theorem}[section]
\newtheorem{lem}[thm]{Lemma}
\newtheorem{cor}[thm]{Corollary}
\newtheorem{prop}[thm]{Proposition}
\newtheorem{conj}[thm]{Conjecture}
\theoremstyle{definition}
\newtheorem{defn}[thm]{Definition}
\newcommand{\bZ}{\mathbb{Z}}
\newcommand{\bQ}{\mathbb{Q}}
\newcommand{\cD}{\mathcal{D}}
\newcommand{\cS}{\mathcal{S}}
\DeclareMathOperator{\Hom}{Hom}
\DeclareMathOperator{\Ind}{Ind}
\DeclareMathOperator{\Res}{Res}
\DeclareMathOperator{\res}{res}
\newcommand{\xra}[1]{\xrightarrow{#1}}
\newcommand{\ket}[1]{|\,#1\,\rangle}
\long\def\@makecaption#1#2{%
    \vskip 10pt
    \setbox\@tempboxa\hbox{%
\small{#1: }\ignorespaces #2}%
    \ifdim \wd\@tempboxa >\captionwidth {%
        \rightskip=\@captionmargin\leftskip=\@captionmargin
        \unhbox\@tempboxa\par}%
      \else
        \hbox to\hsize{\hfil\box\@tempboxa\hfil}%
    \fi}
\newdimen\@captionmargin\@captionmargin=2\parindent
\newdimen\captionwidth\captionwidth=\hsize
\title{KLR algebras and the branching rule II: The categorical Gelfand-Tsetlin basis for the classical Lie algebras}
\author{Pedro Vaz}
\address{Institut de Recherche en Math\'ematique et Physique\\
Universit\'e Catholique de Louvain\\ 
Chemin du Cyclotron 2\\ 
1348 Louvain-la-Neuve\\ 
Belgium}
\email{pedro.vaz@uclouvain.be}
\begin{document}
%
%
% this must be defined locally (to center captions)
\newdimen\captionwidth\captionwidth=\hsize
%
%%%%%%%%%%%%%%%%%%%%%%%%%%%%%%%%%%%%%%%%
%%%                                  %%%
%%%            Abstract              %%%
%%%                                  %%%
%%%%%%%%%%%%%%%%%%%%%%%%%%%%%%%%%%%%%%%%
%
\begin{abstract}
We construct functors categorifying the 
branching rules for $U_q(\g)$ for
$\g$ of type $B_n$ $C_n$, and $D_n$ for 
the embeddings  
$\soo\supset\mathfrak{so}_{2n-1}$,
$\spp\supset\mathfrak{sp}_{2n-2}$, and
$\so\supset\mathfrak{so}_{2n-2}$. 
We give the correspon\-ding categorical Gelfand-Tsetlin basis. 
\end{abstract}
\maketitle

%\tableofcontents
%
%
\pagestyle{myheadings}
%\markright{Pedro Vaz}
\markboth{\em\small Pedro Vaz}{\em\small KLR algebras and the branching rule II}%: Gelfand-Tsetlin basis in types $B$, $C$ and $D$}
%
%
%
%%%%%%%%%%%%%%%%%%%%%%%%%%%%%%%%%%%%%%%%
%%%                                  %%%
%%%        Introduction              %%%
%%%                                  %%%
%%%%%%%%%%%%%%%%%%%%%%%%%%%%%%%%%%%%%%%%
\section{Introduction}\label{sec:intro}

In this sequel to~\cite{vaz2} we continue the program of studying 
the categorical branching rules 
and the correponding categorical Gelfand-Tsetlin basis for $U_q(\g)$ with $\g$ 
a classical Lie algebra. 
Recall that a \emph{branching rule} for (quantum) Kac-Moody algebras 
is a procedure to write irreducibles of $\g$ as direct sums of irreducibles of $\g'$  
whenever we have an embedding $\g'\hookrightarrow\g$. 

Let $\Gamma_{n-1}$ be a Dynkin diagram of classical type with $n-1$ nodes. 
There is only one way of adding a node and the corresponding edge to $\Gamma_{n-1}$ 
to obtain a new Dynkin diagram $\Gamma_{n}$ \emph{of the same type} if 
$\Gamma_{n-1}$ is of type $B_n$, $C_n$ or $D_n$, and two equivalent ways if it is of type $A_n$.
This defines embeddings of corresponding root systems and consequently 
of the respective quantum Kac-Moody algebras.   
In this paper we only consider the branching rule for this kind of embedding.

We denote by $\g_n$ the quantum Kac-Moody algebra of $\Gamma_n$.
Any such $\g_n$ can be constructed from the empty Dynkin diagram 
by adjusting nodes one at a time in such a way that respects its type. 
Repeated application of the branching rule one decompose an irreducible $\g_n$-module 
$M^\lambda$
as a direct sum of 1-dimensional vector spaces. 
The vector space spanned by the collections of all paths in the weight space in going from 
$M^\lambda$ to such a collection of 1-dimensional vector spaces gives a distinguished basis 
of $M^\lambda$ called the Gelfand-Tsetlin basis. 

\smallskip

The purpose of the program described in~\cite{vaz} (see also~\cite{VW}) is to study categorical versions 
of the above and its applications to topology. 
To this end we make use of the program of categorification of quantum quantum groups 
developed by Khovanov, Lauda and Rouquier in~\cite{KL1,Rouq1}. 

\smallskip

In~\cite{vaz2} the author obtained the categorical branching rule and the corresponding 
catego\-rical Gelfand-Tsetlin basis for $\sll\supset\sln$. 
In a nutshel the categorical branching rule takes the form of a certain functor 
between categories of (graded, finite dimensional) 
modules over certain cyclo\-tomic Khovanov-Lauda-Rouquier algebras  
commuting with the catego\-rical $\sln$-action. 

\smallskip

The reader should be aware that this paper relies heavily 
in its predecessor~\cite{vaz2} and we do assume acquaintance with it.  
Most of the arguments given in~\cite{vaz2} for type $A_n$ 
can be transposed without difficulty to the types treated here. 
To avoid this paper from becoming oversized we often just state the result and refer to the reasoning 
followed in~\cite{vaz2}.    

\smallskip

The outline of the paper is the following.
In Section~\ref{sec:basics} we recall some aspects of the quantum algebras we deal with 
together with its representations, branching rules and Gelfand-Tsetlin basis.
In Section~\ref{sec:KLRalgebras} we describe the Khovanonv-Lauda-Rouquier algebras and state some 
of its properties that will be used in this sequel.
In Section~\ref{sec:catBR} we give the categorical branching rule by constructing 
appropriate surjections between KLR algebras. 
In Section~\ref{sec:catGT} we use the results of the previous section to define a collection of modules and prove 
their image in the Grothendieck group coincides with the Gelfand-Tsetlin basis.

%%%%%%%%%%%%%%%%%%%%%%%%%%%%%%%%%%%%%%%%%%%%%%%
\section{Quantum groups, the branching rule and the Gelfand-Tsetlin basis}\label{sec:basics}

Let $\g$ be a Lie algebra with Dynkin diagram $\Gamma$.
We denote the weight lattice by $\Lambda(\g)$ and the root lattice by $X(\g)$. 
Let $\{\alpha_i\}_{i\in\Gamma}$ be the simple roots and 
$\{\alpha_1^\vee\}_{i\in\Gamma}$ the simple coroots. 
For $\llambda\in\Lambda(\g)$ we define  
$\llambda_i= \alpha_i^\vee(\llambda)$. 
Denote the set of dominant integral weights by 
\begin{equation*}
\Lambda_+(\g) = \bigl\{ \llambda\in\Lambda(\g)\vert \alpha_i^\vee(\llambda)\in\bZ_{\geq 0}\text{ for all }i\in\Gamma \bigr\}. 
\end{equation*}
Denote by $\brak{\ ,\,}$ is the usual (symmetrized) inner product on 
$\Lambda(\g)$
and write $q_i= q^{d_i}$ where
$d_i=\tfrac{1}{2}\brak{\alpha_i,\alpha_i}$. 
Let also $[a]_i = q_i^{a-1} + q_i^{a-3} + \dotsm + q_i^{-a+1}$.
Moreover, let 
$c_{ij}=\alpha_i^\vee(\alpha_j)$
be the entries of the cartan matrix of $\g$. 

\medskip

%\begin{defn} 
The {\em quantum universal enveloping linear algebra} $U_q(\g)$ of $\g$ 
is 
the associative unital $\bQ(q)$-algebra generated by the Chevalley generators $F_i$, $E_i$ and 
$K_\eta$, for $i\in\Gamma$ and $\eta\in\Lambda(\g)$, 
subject to the relations
\begin{gather*}
K_0 = 1, \mspace{40mu} K_\eta K_{\eta'} = K_{\eta + \eta'},\ \text{ for all } \eta,\eta'\in\Lambda(\g),
\\
K_\eta F_i = q^{-\alpha_i^\vee(\eta)}F_iK_\eta, \mspace{40mu} 
K_\eta E_i = q^{\alpha_i^\vee(\eta)}E_iK_\eta,\ \ \text{ for all } \eta\in\Lambda(\g),
\\
E_iF_j - F_jE_i = \delta_{i,j}\dfrac{K_{d_i\alpha_i} - K_{-d_i\alpha_i} }{q_i - q_i^{-1}}.
\intertext{Moreover, for all $i\neq j$,}
\sum\limits_{a+b=-c_{ij}+1}(-1)^aF_i^{(a)}F_jF_i^{(b)} = 0
\mspace{40mu}\text{and}\mspace{40mu}
%\qquad\text{ for all }\quad i\neq j
%\\ 
\sum\limits_{a+b=-c_{ij}+1}(-1)^aE_i^{(a)}E_jE_i^{(b)} = 0
%\qquad\text{ for all }\quad i\neq j
\end{gather*} 
where $E_i^{(a)}=\frac{E_i^a}{[a]_i!}$.  
%\end{defn}

\smallskip 

For $\und{i}=(i_1,\dotsc , i_k)$ we define $F_{\und{i}} = F_{i_1}\dotsm F_{i_k}$ 
and $E_{\und{i}} = E_{i_1}\dotsm E_{i_k}$. 
%The reason for this convention will be clear later when we introduce the diagrammatics. 

\smallskip

\n The \emph{lower half} $U_q^-(\g)\subset U_q(\g)$ quantum algebra is the subalgebra generated by the $F$s 
(analogously for the \emph{upper half} $U_q^+(\g)$).

\medskip

Recall that a subspace $V_\lmu$  of a 
finite dimensional $U_q(\g)$-module $V$ is called a \emph{weight space} 
of weight $\mu(\lambda)$ if
\begin{equation*}
K_\mu v = q^{\mu(\lambda)}v
\end{equation*}
for all $v\in V_\lmu$ and that $V$ is called a \emph{weight module} if
\begin{equation*}
V = \bigoplus\limits_{\lmu\in\Lambda(\g)}V_{\lmu} . 
\end{equation*}

A weight module $V$ is called a \emph{highest weight module with highest weight $\llambda$} if 
there exists a nonzero weight vector vector $v_\llambda\in V_\llambda$ such that $E_iv_\llambda=0$ for all $i\in\Lambda$. 
Recall that for each $\llambda\in\Lambda_+(\g)$ there exists a unique irreducible highest 
weight module with highest weight $\llambda$. 

\smallskip

Let $\phi$ be the $q$-antilinear involution on $U_q(\g)$ defined by
\begin{equation*}
\phi(K_{\eta})=K_{-\eta}
\mspace{20mu}
\phi(F_i)=q_i^{-1}K_{d_i\alpha_i}E_i
\mspace{20mu}
\phi(E_i)=q_i^{-1}K_{-d_i\alpha_i}F_i .
\end{equation*}
The \emph{$q$-Shapovalov form} $\langle\ \,,\ \rangle$ is the unique non-degenerate symmetric bilinear form 
on the highest weight module $V(\llambda)$ satisfying 
\begin{align*}
\langle v_\llambda,  v_\llambda \rangle &= 1,
\\
\langle uv,v'\rangle &=  \langle v,\phi(u)v'\rangle \text{\quad for all $u\in U_q(\g)$ and $v,v'\in V(\lambda)$},
\\
f\langle v,v'\rangle &=  \langle\bar{f}v,v'\rangle = \langle v,fv'\rangle 
\text{\quad for any $f\in\bQ(q)$ and $v,v'\in V(\lambda)$}.
\end{align*}

%%%%%%%%%%%%%%%%%%%%%%%%%%%%%%%%%%%%%%%%%%%%%%%
\subsection{Branching rules and the Gelfand-Tsetlin basis}\label{ssec:brulesGT}

Let $\g_m$ be a Lie algebra with $\vert\Gamma\vert=m$. 
We denote by $V^{\g_m}_{\lambda}$ the irreducible finite dimensional representation of $U_q(\g_m)$ of 
highest weight $\lambda$.  
For the embedding $\g_{n-1}\hookrightarrow\g_n$ corresponding to 
adding one vertex to the Dynkin diagram of $\g_{n-1}$ 
the branching rule from~\cite{molev} says that   
\begin{equation}
\label{eq:brulsalg}
\bigl(V^{\g_n}_{\lambda}\bigr)^{\g_{n-1}} 
\cong 
\bigoplus\limits_{\mu} c(\mu) V_{\mu}^{\g_{n-1}} 
\end{equation}
is an isomorphism of $\g_{n-1}$-modules, where 
%$\tau(\lambda)$ is a set of $\g_{n-1}$-highest weights and 
the multiplicity $c(\mu)$ 
of $V_{\mu}^{\g_{n-1}}$ as a direct summand of $V_{\mu}^{\g_n}$  
is  well-determined 
for all $\g$ of classical type (see~\cite{molev}). 
To make easier dealing with the multiplicities we first interpret $c(\mu)$ as a 
the cardinality of a certain set $C(\mu)$ and then define $\tau(\lambda)=\tau_{\g}(\lambda)$ 
as the set of ordered pairs 
$(\nu,\mu )$ such that $\mu$ occurs as a wighest weight in the decomposition~\eqref{eq:brulsalg} 
and $\nu\in C(\mu)$.  
We can then re-write formula~\eqref{eq:brulsalg} as  
\begin{equation}
\label{eq:brulsalgg}
\bigl(V^{\g_n}_{\lambda}\bigr)^{\g_{n-1}} 
\cong 
\bigoplus\limits_{(\nu,\mu)\,\in\,\tau(\lambda)} V_{\mu}^{\g_{n-1}} 
.
\end{equation}

We can then
apply the branching rule recursively until 
we end up with a direct sum of 1-dimensio\-nal spaces 
corresponding to a final decomposition of each irreducible of $\mathfrak{sl}_2$ 
into 1-dimensional $\bQ(q)$-vector spaces.  

\medskip

Let $\mu^{(1)}$  be a $\g_n$-highest weight. 
We say a sequence $(\mu^{(1)},\nu^{(1)},\mu^{(2)},\nu^{(2)},\mu^{(3)}, \dotsc ,\xi^{(n)})$, 
where $\xi^{(n)}=\nu^{(n)}$ for $\g_n$ of type $B_n$ or $C_n$ or $\xi^{(n)}=\mu^{(n)}$ for 
$\g_n$ of type $D_n$, 
is a \emph{Gelfand-Tsetlin pattern for $\mu^{(1)}$} 
if for each $j$ we have $(\nu^{(j)},\mu^{(j+1)})\in\tau(\mu^{(j)})$. 
We denote by $\cS(\lambda)$ the set of all the Gelfand-Tsetlin patterns for a
$\g_n$-highest weight $\lambda$.  

\medskip

There is a 1:1 correspondence 
between $\cS(\lambda)$ and the set of all the 1-dimensional spaces that we obtain using recursive 
application of the branching rules.
Let $V_{\cS(\lambda)}$ be the $\bQ(q)$-linear space spanned by $\cS(\lambda)$. 
We write $\ket{s}$ for an Gelfand-Tsetlin pattern $s$ seen as an element of $V_{\cS(\lambda)}$. 
It turns out that $V_{\cS(\lambda)}$ is isomorphic to  $V_\lambda^{\g_n}$ not only as vector spaces by as $\g_n$-modules. 

The $\g_n$-action on $V_{\cS(\lambda)}$ can be obtained through a procedure which, in some sense, 
is the reverse of the direct sum decomposition~\eqref{eq:brulsalgg} using the branching rule.   
While the generators $\{E_i,F_i\}_{i\in\{2,\dotsc ,n\}}$ preserve the weight spaces 
$V_\mu^{\g_{n-1}}$
on the right-hand side of~\eqref{eq:brulsalgg},   
the generators $E_1$ and $F_1$ move between the different $V_\mu^{\g_{n-1}}$ 
(two copies of $V_\mu^{\g_{n-1}}$ for the same $\mu$ and different $\nu$ count as distinct): 
Let $\phi$ be the isomorphism $V_\lambda^{\g_n}\to\oplus_{(\nu,\mu)\in\tau(\lambda)} V_{\mu}^{\g_{n-1}}$ 
in~\eqref{eq:brulsalgg}. 
Then the $\g_{n-1}$-action on $\oplus_{\mu\in\tau(\lambda)} V_{\mu}^{\g_{n-1}}$ extends to an $\g_n$-action 
if we define 
\begin{equation*}
E_1v = \phi E_1\phi^{-1} v
\mspace{25mu}\text{and}\mspace{25mu}
F_1v = \phi F_1\phi^{-1} v
\end{equation*}
for $v\in\oplus_{(\nu,\mu)\in\tau(\lambda)} V_{\mu}^{\g_{n-1}}$. 
This is a consequence of $\phi E_i\phi^{-1} v = E_i v$ and $\phi F_i\phi^{-1} v = F_i v$ 
for all $v\in\oplus_{(\nu,\mu)\in\tau(\lambda)} V_{\mu}^{\g_{n-1}}$. 
We can continue and intertwine this procedure with the branching rule procedure until we get the desired 1-dimensional spaces 
and regard the $\g_n$-action on them as an action on $V_{\cS(\lambda)}$. 
The basis of $V_{\cS(\lambda)}$ given by the Gelfand-Tsetlin patterns (i.e. elements of $\cS(\lambda)$) 
is called the Gelfand-Tsetlin basis for $V_\lambda^{\g_n}$.  
The explicit form of action of the generators of the Lie algebra 
$\g_n$ on the Gelfand-Tsetlin basis can be found for example in~\cite{molev} or~\cite{zelobenko}.

%%%%%%%%%%%%%%%%%%%%%%
%%%                %%%
%%%   so(2n+1)     %%%
%%%                %%%
%%%%%%%%%%%%%%%%%%%%%%
%%%%%%%%%%%%%%%%%%%%%%%%%%%%%%%%%%%%%%%%%%%%%%%
\subsection{Representations of quantum $\soo$}\label{ssec:qalgB}
Let 
$\epsilon_i=(0,\ldots,1,\ldots,0)\in \bZ^n$, with $1$ being on the $i$-th 
coordinate, for 
$i=1,\ldots,n$. We also denote usual inner product on $\bZ^n$ by  
$(\epsilon_i,\epsilon_j)=\delta_{i,j}$. 

\smallskip

Recall that the quantum enveloping algebra (QEA) of type $B_n$ corresponds to the Dynkin diagram
\begin{equation*}
\labellist
\pinlabel $\dotsc$  at 188 60.5    
\tiny \hair 2pt 
\pinlabel $1$   at   7 36  
\pinlabel $2$   at  78 36 
\pinlabel $n-2$ at 282 35 
\pinlabel $n-1$ at 358 35 
\pinlabel $n$   at 433 35 
\endlabellist
\figins{-19}{0.65}{dynkinnB} 
\end{equation*}
The simple root system is 
\begin{align*}
\alpha_1 = \varepsilon_1-\varepsilon_2,\ \dotsc ,\ \alpha_{n-1} = \varepsilon_{n-1}-\varepsilon_n,\ \alpha_n = \varepsilon_n . 
\end{align*}
Irreducible representations of $\soo$ correspond to n-tuples 
$\lambda_1, \dotsc ,\lambda_n$ which are all non-negative integers (for vector representations) 
or all non-negative half-integers (for spin representations) and they satisfy
\begin{equation*}
\lambda_1 \geq \lambda_2 \geq \dotsm \geq \lambda_{n-1} \geq \lambda_n \geq 0 .
\end{equation*}

The correspondence with the set of non-negative integers identifying $\lambda$ (Dynkin labels) is
\begin{equation*}
\begin{cases}
\widebar{\lambda}_j = \lambda_j - \lambda_{j+1} & \text{ for } j=1,\dotsc ,n-1, 
\\[2ex]
\widebar{\lambda}_n = 2 \lambda_{n}.
\end{cases}
\end{equation*}

%%%%%%%%%%%%%%%%%%%%%%%%%%%%%%%%%%%%%%%%%%%%%%%
\subsubsection{Branching rules}\label{ssec:brulesB}

Let $\Gamma_{n}$ and $\Gamma_{{n-1}}$ be the Dynkin diagrams associated to the types 
$B_n$ and $B_{n-1}$ 
with the vertices of $\Gamma_{n}$ labeled from 1 to $n$ and the vertices from $\Gamma_{{n-1}}$ 
labeled from 2 to $n$. 
Consider the inclusion $\Gamma_{{n-1}}\hookrightarrow\Gamma_{n}$ 
that adds a vertex labeled 1 at the beginning of $\Gamma_{{n-1}}$ and the corresponding edge, \emph{i.e.}   
\begin{equation}\label{eq:dynkinplusB}
\labellist
\pinlabel $\dotsc$  at 118 60.5    
\tiny \hair 2pt 
\pinlabel $2$   at   7 36  
\pinlabel $n-2$ at 215 35 
\pinlabel $n-1$ at 288 35 
\pinlabel $n$   at 364 35 
\endlabellist
\figins{-19}{0.65}{dynkinB}
\mspace{35mu}\hookrightarrow\mspace{35mu}  
\labellist
\pinlabel $\dotsc$  at 188 60.5    
\tiny \hair 2pt 
\pinlabel $1$   at   7 36  
\pinlabel $2$   at  78 36 
\pinlabel $n-2$ at 287 35 
\pinlabel $n-1$ at 364 35 
\pinlabel $n$   at 435 35 
\endlabellist
\figins{-19}{0.65}{dynkinnB} 
\end{equation}

We have isomorphisms of quantum $\mathfrak{so}_{2n-1}$-representations,
\begin{equation}\label{eq:brBB}
\bigl(V_\lambda^{\soo}\bigr)^{\mathfrak{so}_{2n-1}} \cong \bigoplus\limits_{\mu} c(\mu) V_\mu^{\mathfrak{so}_{2n-1}} ,
\end{equation}
where $\mu=(\mu_2,\dotsc ,\mu_n)$ is in $\Lambda_+^{\mathfrak{so}_{2n-1}}$
and
the multiplicity $c(\mu)$ is the number of 
$\nu = (\nu_1,\dotsc ,\nu_n)$ (all simultaneously integers or half-integers together with the $\lambda_i$s) satisfying 
\begin{align*}
\lambda_1 \geq \nu_1 \geq \lambda_2 \geq \dotsm \geq \nu_{n-1} \geq \lambda_n \geq \nu_n \geq -\lambda_n
\\
\nu_1 \geq \mu_2 \geq \nu_2\geq \dotsm \geq \nu_{n-1}\geq \mu_{n} \geq \nu_n \geq -\mu_n
\end{align*}
This can be depicted schematically as the \emph{Gelfand-Tsetlin pattern} 
(GT-pattern)
of type $B_n$
\begin{equation}\label{eq:GTpattB}
%\bket{
\left\vert
\begin{array}{ccccccccccc}
\lambda_1 & & \lambda _2 & & \lambda_3 & & \dotsm & & \lambda_n & & -\lambda_n
\\
& \nu_1 & & \nu_2 & & \dotsm & & & & \nu_n &
\\
 & & \mu _2 & & \mu_3 & & \dotsm & & \mu_n & & -\mu_n
\end{array}%}
\right.\xy/r0.62pt/:
(0,37)*{}; (12,0)*{};  **\dir{-};(0,37)*{}; (12,0)*{};  **\dir{-};(0,37)*{}; (12,0)*{};  **\dir{-};
(12,0)*{}; (0,-37); **\dir{-}; (12,0)*{}; (0,-37); **\dir{-}; (12,0)*{}; (0,-37); **\dir{-}
\endxy  \ ,
\end{equation}
where, for each triple of symbols $a_{i-1}$, $a_i$ and $a_{i+1}$ placed respectively in column 
$i-1$, $i$ and $i+1$, we have the inbetweenness condition
\begin{equation}\label{eq:inbetB}
a_{i-1} \geq a_i \geq a_{i+1} .
\end{equation}
The set $\tau_{B_n}(\lambda)$ denotes the set of all such $(\nu,\mu)$.

Notice that $\nu_n$ can be negative and that $\nu$ satisfies 
\begin{equation*}
\nu_1\geq\nu_2\geq \dotsc \geq\nu_{n-1}\geq \vert \nu_n\vert .
\end{equation*}

Let $\xi_{-i}\colon \bZ^n\to \bZ^n$  and  $\xi_{+i}\colon \bZ^n\to \bZ^{n-1}$ be the lattice maps 
defined by 
\begin{align*}
\xi_{-i}(\lambda) &= (\lambda_1, \dotsc ,\lambda_{i}-1,\dotsc,\lambda_n) 
\\
\xi_{+i}(\lambda) &= (\lambda_2, \dotsc ,\lambda_{i+1}+1,\dotsc,\lambda_n) 
\end{align*}
where $-\lambda_n\leq \bigl(\xi_{-n}(\lambda)\bigr)_n \leq \lambda_n$ (it can be negative)
and
\begin{equation*}
 \xi_{-\varnothing}(\lambda) = \lambda ,
\mspace{40mu}
\xi_{+\varnothing}(\lambda) = (\lambda_2,\dotsc ,\dotsc,\lambda_n) .
\end{equation*}

For nonempty sequences $\und{t}^\ell = (t_1,\dotsc ,t_\ell)$ and $\und{i}^m=(i_1,\dotsc ,i_m)$  
we write  
$$\xi_{-\und{t}^{\ell}} = \xi_{-t_\ell}\circ \dotsm\circ\xi_{-t_2}\circ\xi_{-t_1}, \qquad
\xi_{+\und{i}^m} = \xi_{+i_m}\circ \dotsm\circ\xi_{+i_2}\circ\xi_{+i_1}$$
and
$$\xi_{-\und{t}^{\ell},+\und{i}^m} = \xi_{+\und{i}^m}\circ\xi_{-\und{t}_\ell} .$$

\begin{defn}\label{def:admB}
The map $\xi_{-\und{t}^\ell,+\und{i}^m}$ is \emph{$\lambda$-admissible} 
if 
$t_1\leq t_2 \leq \dotsc \leq t_\ell$ and 
$i_{m}\leq i_{m - 1} \leq \dotsc \leq i_1$  
and if
$\bigl(\xi_{-\und{t}^\ell}(\lambda),\xi^+_{-\und{t}^\ell,+\und{i}^m}(\lambda)\bigr)$ 
is in $\tau_{B_n}(\lambda)$.  
For fixed $k=m+\ell$ we denote by $\cD_{B_n,\lambda}^{k}$ the set of all such maps. 
\end{defn}
The set $\tau_{B_n}(\lambda)$ can be seen as the set 
$$\cD_{B_n,\lambda} = \bigsqcup_{k\geq 0}\cD_{B_n,\lambda}^k.$$ 
Hence, we can write $\xi_{(\mu,\nu)}$ for the corresponding $\xi_{-\und{t}^\ell,+\und{i}^m}$
without ambiguity.

%%%%%%%%%%%%%%%%%%%%%%%%%%%%%%%%%%%%%%%%%%%%%%%%%%%%%
\subsubsection{The Gelfand-Tsetlin basis}\label{ssec:GTB}

The complete Gelfand-Tsetlin pattern for $V_\lambda^{\soo}$
%the QEA of type $B_n$ 
is
\begin{equation*}
\ket{\lambda^{(1)}, \nu^{(1)},\lambda^{(2)},\nu^{(2)}, \dotsc ,\lambda^{(n)},\nu^{(n)}}
\end{equation*}
where 
\begin{align*}
\lambda^{(i)} &= \bigl\{(\lambda_i^{(i)},\dotsc ,\lambda_n^{(i)})\;\vert\; 
\lambda_i^{(i)}\geq \lambda_{i+1}^{(i)}\geq \dotsm \geq \lambda_{n-1}^{(i)}\geq \lambda_n^{(i)}\geq 0 \bigr\}
\\[1ex]
\nu^{(i)} &= \bigl\{(\nu_i^{(i)},\dotsc ,\nu_n^{(i)})\;\vert\; 
\nu_i^{(i)}\geq \nu_{i+1}^{(i)}\geq \dotsm \geq \nu_{n-1}^{(i)}\geq \vert \nu_n^{(i)}\vert \; \bigr\}
\end{align*}
and where each triple $(\lambda^{(i)}, \nu^{(i)}, \lambda^{(i+1)})$ 
satisfies the inbetweenness condition~\eqref{eq:inbetB}.

Schematically we depict a complete GT-pattern as
\begin{equation*}
%\bket{
\left\vert 
\begin{array}{cccccccccc}
\lambda_1^{(1)} & & \lambda_2^{(1)} & & \lambda_3^{(1)} &   & & \lambda_n^{(1)} & & -\lambda_n^{(1)}
\\ 
& \nu_1^{(1)} & & \nu_2^{(1)} & & \dotsm  & & & \nu_n^{(1)} &
\\
 & & \lambda _2^{(2)} & & \lambda_3^{(2)} &   & & \lambda_n^{(2)} & & -\lambda_n^{(2)}
\\ 
&  & & \nu_2^{(2)} & & \dotsm &  & & \nu_n^{(2)} &
%\\ 
%& &  & & \lambda_3^{(3)} & & \dotsm & & \lambda_n^{(3)} & & -\lambda_n^{(3)}
%\\ 
%&  & & \nu_2^{(2)} & & \dotsm & & & & \nu_n^{(2)} &
\\[1ex] 
 & & & & &  & & & \vdots  & 
\\[1ex]  
& &  & &  & &  & \lambda_n^{(n)} & & -\lambda_n^{(n)} 
\\ 
&  & &  & &  & & & \nu_n^{(n)} &
\end{array}%}
\right.\xy/r1.78pt/:
(0,37)*{}; (12,0)*{};  **\dir{-};(0,37)*{}; (12,0)*{};  **\dir{-};(0,37)*{}; (12,0)*{};  **\dir{-};
(12,0)*{}; (0,-37); **\dir{-}; (12,0)*{}; (0,-37); **\dir{-}; (12,0)*{}; (0,-37); **\dir{-}
\endxy  \ ,
\end{equation*}
with entries elements satisfying the inbetweenness relation~\eqref{eq:inbetB}.

Denote by $\cS(\lambda)$ the set of all complete Gelfand-Tsetlin patterns for  $V_\lambda^{\soo}$ 
and by $V_{\cS(\lambda)}$ the $\bQ(q)$-vector space spanned by $\cS(\lambda)$. 
There is an action of $\soo$ on  $V_{\cS(\lambda)}$ that makes it 
isomorphic to  $V_\lambda^{\soo}$ as $\soo$-modules. 
Explicit formulas for the $\soo$-action can be found for example in~\cite{molev}.

%%%%%%%%%%%%%%%%%%%%%%
%%%                %%%
%%%     so(2n)     %%%
%%%                %%%
%%%%%%%%%%%%%%%%%%%%%%
%%%%%%%%%%%%%%%%%%%%%%%%%%%%%%%%%%%%%%%%%%%%%%%
\subsection{Representations of quantum $\so$}\label{ssec:qalgD}

Type $D_n$ is very similar to type $B_n$. 
The QEA of type $D_n$ corresponds to the Dynkin diagram
\begin{equation*}
\labellist
\pinlabel $\dotsc$  at 188 60.5    
\tiny \hair 2pt 
\pinlabel $1$   at   7 36  
\pinlabel $2$   at  78 36 
\pinlabel $n-3$ at 282 35 
\pinlabel $n-2$ at 354 35 
\pinlabel $n-1$ at 468 98 
\pinlabel $n$   at 452 22 
\endlabellist
\figins{-19}{0.65}{dynkinnD} 
\end{equation*}
and has simple root system given by  
\begin{align*}
\alpha_1 = \varepsilon_1-\varepsilon_2,\ \dotsc ,\ \alpha_{n-1} = \varepsilon_{n-1}-\varepsilon_n,\ \alpha_n = \varepsilon_{n-1}+\varepsilon_n .
\end{align*}
Irreducible representations of $\so$ correspond to n-tuples 
$\lambda_1, \dotsc ,\lambda_n$ which are all integers (for vector representations) 
or all half-integers (for spin representations) and they satisfy
\begin{equation*}
\lambda_1 \geq \lambda_2 \geq \dotsm \geq \lambda_{n-1} \geq \vert\lambda_n\vert .
\end{equation*}

The correspondence with the set Dynkin labels is 
\begin{equation*}
\begin{cases}
\widebar{\lambda}_j = \lambda_j - \lambda_{j+1} & \text{ for } j=1,\dotsc ,n-1 , 
\\[2ex]
\widebar{\lambda}_n = \lambda_{n-1} + \lambda_{n} .
\end{cases}
\end{equation*}

%%%%%%%%%%%%%%%%%%%%%%%%%%%%%%%%%%%%%%%%%%%%%%%
\subsubsection{Branching rules}\label{ssec:brulesD}

As in the previous cases we let $\Gamma_n$ and $\Gamma_{n-1}$ be the Dynkin diagrams associated to types 
$D_n$ and $D_{n-1}$ 
with the vertices of $\Gamma_n$ labeled from 1 to $n$ and the vertices from $\Gamma_{n-1}$ 
labeled from 2 to $n$. 
Consider the inclusion $\Gamma_{n-1}\hookrightarrow\Gamma_n$ 
that adds a vertex labeled 1 at the beginning of $\Gamma_{n-1}$ and the corresponding edge, \emph{i.e.}   
\begin{equation}\label{eq:dynkinplusD}
\labellist
\pinlabel $\dotsc$  at 188 60.5    
\tiny \hair 2pt 
\pinlabel $2$   at   7 36  
\pinlabel $n-3$ at 208 35 
\pinlabel $n-2$ at 282 35 
\pinlabel $n-1$ at 396 98 
\pinlabel $n$   at 380 22 
\endlabellist
\figins{-19}{0.65}{dynkinD}
\mspace{35mu}\hookrightarrow\mspace{35mu}  
\labellist
\pinlabel $\dotsc$  at 188 60.5    
\tiny \hair 2pt 
\pinlabel $1$   at   7 36  
\pinlabel $2$   at  78 36 
\pinlabel $n-3$ at 282 35 
\pinlabel $n-2$ at 354 35 
\pinlabel $n-1$ at 468 98 
\pinlabel $n$   at 452 22 
\endlabellist
\figins{-19}{0.65}{dynkinnD} 
\end{equation}

As a quantum $\mathfrak{so}_{2n-2}$-representation we have an isomorphism,
\begin{equation}\label{eq:brDD}
\bigl(V_\lambda^{\so}\bigr)^{\mathfrak{so}_{2n-2}} \cong \bigoplus\limits_{\mu} c(\mu) V_\mu^{\mathfrak{so}_{2n-2}} ,
\end{equation}
where $\mu=(\mu_2,\dotsc ,\mu_n)$ is in $\Lambda_+^{\mathfrak{so}_{2n-1}}$
and
the multiplicity $c(\mu)$ is the number of 
$\nu = (\nu_2, \dotsc ,\nu_n)$ (all simultaneously integers or half-integers together with the $\lambda_i$s) satisfying 
\begin{align*}
\lambda_1 \geq \nu_2 \geq \lambda_2 \geq \dotsm \geq \lambda_{n-1} \geq \nu_n \geq \vert\lambda_n\vert
\\
\nu_2 \geq \mu_2 \geq \dotsm \geq \mu_{n-1} \geq \nu_n \geq \vert\mu_n\vert
\end{align*}
This can be depicted schematically as the \emph{Gelfand-Tsetlin pattern} 
of type $D_n$
\begin{equation}\label{eq:GTpattD}
%\bket{
\left\vert 
\begin{array}{ccccccccccc}
\lambda_1 & & \lambda _2 && \lambda_3 && \dotsm & && \lambda_n  
\\
& \nu_2 & & \nu_3 & &  \dotsm & & \dotsm  & \nu_n & 
\\
 & & \mu _2 & & \mu_3 & &  \dotsm & &  & \mu_n  
\end{array}%}
\right.\xy/r0.62pt/:
(0,37)*{}; (12,0)*{};  **\dir{-};(0,37)*{}; (12,0)*{};  **\dir{-};(0,37)*{}; (12,0)*{};  **\dir{-};
(12,0)*{}; (0,-37); **\dir{-}; (12,0)*{}; (0,-37); **\dir{-}; (12,0)*{}; (0,-37); **\dir{-}
\endxy \ ,
\end{equation}
where, for each triple of symbols $a_{i-1}$, $a_i$ and $a_{i+1}$ placed respectively in column 
$i-1$, $i$ and $i+1$, we have the inbetweenness condition
\begin{equation}\label{eq:inbetD}
a_{i-1} \geq a_i \geq \vert a_{i+1}\vert  .
\end{equation}
The set $\tau_{D_n}(\lambda)$ denotes the set of all such $(\nu,\mu)$.

Define the lattice maps 
$\xi_{+i}\colon \bZ^n\to \bZ^{n-1}$ and
$\xi_{-i}\colon \bZ^{n-1}\to \bZ^{n-1}$
by  
\begin{align*}
\xi_{+i}(\lambda) &= (\lambda_2,\dotsc ,\lambda_{i+1}+1,\dotsc,\lambda_n) 
\\
\xi_{-i}(\nu) &= (\nu_2,\dotsc ,\nu_{i}-1,\dotsc,\nu_n) 
\end{align*}
and
\begin{equation*}
 \xi_{-\varnothing}(\nu) = \nu ,
\mspace{40mu}
\xi_{+\varnothing}(\lambda) = (\lambda_2,\dotsc ,\lambda_n) .
\end{equation*}

For nonempty sequences $\und{t}^\ell = (t_1,\dotsc ,t_\ell)$ and $\und{i}^m=(i_1,\dotsc ,i_m)$  
we write  
$$\xi_{-\und{t}^{\ell}} = \xi_{-t_\ell}\circ \dotsm\circ\xi_{-t_2}\circ\xi_{-t_1}, \qquad
\xi_{+\und{i}^m} = \xi_{+i_m}\circ \dotsm\circ\xi_{+i_2}\circ\xi_{+i_1}$$
and
$$\xi_{-\und{t}^{\ell},+\und{i}^m} = \xi_{+\und{i}^m}\circ\xi_{-\und{t}_\ell} .$$
We use the same notational principle for the idempotents $p_{-\und{t}^\ell,+\und{i}^m}$.

\begin{defn}\label{def:admD}
The map $\xi_{+\und{i}^m,-\und{t}^\ell}$ is \emph{$\lambda$-admissible} 
if 
$i_1\leq i_2 \leq \dotsc \leq i_m$ and 
$t_{\ell}\leq t_{\ell - 1} \leq \dotsc \leq t_1$  
and if
$\bigl(\xi_{+\und{i}^m}(\lambda),\xi_{+\und{i}^m,-\und{t}^\ell}(\lambda)\bigr)$ 
is in $\tau_{D_n}(\lambda)$.  
For fixed $k=m+\ell$ we denote by $\cD_{D_n,\lambda}^{k}$ the set of all such maps. 
\end{defn}
The set $\tau_{D_n}(\lambda)$ can be seen as 
$$\cD_{D_n}=\bigsqcup_{k\geq 0}\cD_{D_n,\lambda}^k.$$ 
Hence, we can write $\xi_{(\mu,\nu)}$ for the corresponding $\xi_{+\und{}^m,-\und{t}^\ell}$ 
Without ambiguity.

%%%%%%%%%%%%%%%%%%%%%%%%%%%%%%%%%%%%%%%%%%%%%%%%%%%%%
\subsubsection{The Gelfand-Tsetlin basis}\label{ssec:GTD}

The complete Gelfand-Tsetlin pattern for the QEA of type $D_n$ is
\begin{equation*}
\ket{\lambda^{(1)}, \nu^{(2)},\lambda^{(2)},\nu^{(3)}, \dotsc ,\nu^{(n)}, \lambda^{(n)} }
\end{equation*}
where 
\begin{align*}
\lambda^{(i)} &= \bigl\{(\lambda_i^{(i)},\dotsc ,\lambda_n^{(i)})\;\vert\; 
\lambda_i^{(i)}\geq \lambda_{i+1}^{(i)}\geq \dotsm \geq \lambda_{n-1}^{(i)}\geq \vert\lambda_n^{(i)}\vert \; \bigr\}
\\[1ex]
\nu^{(i)} &= \bigl\{(\nu_i^{(i)},\dotsc ,\nu_n^{(i)})\;\vert\; 
\nu_i^{(i)}\geq \nu_{i+1}^{(i)}\geq \dotsm \geq \nu_{n-1}^{(i)}\geq \nu_n^{(i)}\geq 0\;  \bigr\} 
\end{align*}
and where each triple $(\lambda^{(i)}, \nu^{(i)}, \lambda^{(i+1)})$ 
satisfies the inbetweenness condition~\eqref{eq:inbetD}.

Schematically we depict a complete GT-pattern as
\begin{equation*}
%\bket{
\left\vert 
\begin{array}{cccccccccc}
\lambda_1^{(1)} & & \lambda_2^{(1)} & & \lambda_3^{(1)} &   & && \lambda_n^{(1)}  
\\ 
& \nu_2^{(2)} & & \nu_3^{(2)} & & \dotsm  & & \nu_n^{(2)}  &
\\
 & & \lambda _2^{(2)} & & \lambda_3^{(2)} &   && & \lambda_n^{(2)} 
\\ 
&  & & \nu_3^{(3)} & & \dotsm &  &  \nu_n^{(3)} &
\\[1ex] 
 & & & & &  & &  & \vdots &
\\[1ex]  
& & & & & & & \nu_n^{(n)} &
\\
& &  & &  & &  && \lambda_n^{(n)}  
\end{array}%}
\right.\xy/r1.78pt/:
(0,37)*{}; (12,0)*{};  **\dir{-};(0,37)*{}; (12,0)*{};  **\dir{-};(0,37)*{}; (12,0)*{};  **\dir{-};
(12,0)*{}; (0,-37); **\dir{-}; (12,0)*{}; (0,-37); **\dir{-}; (12,0)*{}; (0,-37); **\dir{-}
\endxy \ ,
\end{equation*}
with entries satisfying the inbetweeness relation~\eqref{eq:inbetD}. 

Denote $\cS(\lambda)$ the set of all complete Gelfand-Tsetlin patterns for  $V_\lambda^{\so}$ 
and by $V_{\cS(\lambda)}$ the $\bQ(q)$-vector space spanned by $\cS(\lambda)$. 
There is an action of $\so$ on  $V_{\cS(\lambda)}$
such that $V_{\cS(\lambda)}$ and $V_\lambda^{\so}$ are isomorphic 
as $\so$-modules. 
Explicit formulas for the $\so$-action can be found in~\cite{molev}.

%%%%%%%%%%%%%%%%%%%%%%
%%%                %%%
%%%     sp(2n)     %%%
%%%                %%%
%%%%%%%%%%%%%%%%%%%%%%
%%%%%%%%%%%%%%%%%%%%%%%%%%%%%%%%%%%%%%%%%%%%%%%
\subsection{Representations of quantum $\spp$}\label{ssec:qalgC}

The QEA of type $C_n$ corresponds to the Dynkin diagram
\begin{equation*}
\labellist
\pinlabel $\dotsc$  at 188 60.5    
\tiny \hair 2pt 
\pinlabel $1$   at   7 36  
\pinlabel $2$   at  78 36 
\pinlabel $n-2$ at 282 35 
\pinlabel $n-1$ at 358 35 
\pinlabel $n$   at 433 35 
\endlabellist
\figins{-19}{0.65}{dynkinnC} 
\end{equation*}
and has simple root system given by 
\begin{align*}
\alpha_1 = \varepsilon_1-\varepsilon_2,\ \dotsc ,\ \alpha_{n-1} = \varepsilon_{n-1}-\varepsilon_n,\ \alpha_n = 2\varepsilon_n .
\end{align*}
Irreducible representations of $\spp$ correspond to n-tuples 
$\lambda_1, \dotsc ,\lambda_n$ of integers 
satisfying 
\begin{equation*}
\lambda_1 \geq \lambda_2 \geq \dotsm \geq \lambda_{n-1} \geq \lambda_n \geq 0 ,
\end{equation*}
which correspond with the Dynkin labels as follows  
\begin{equation*}
\begin{cases}
\widebar{\lambda}_j = \lambda_j - \lambda_{j+1} & \text{ for } j=1,\dotsc ,n-1 ,
\\[2ex]
\widebar{\lambda}_n = \lambda_{n} .
\end{cases}
\end{equation*}

%%%%%%%%%%%%%%%%%%%%%%%%%%%%%%%%%%%%%%%%%%%%%%%
\subsubsection{Branching rules}\label{ssec:brulesC}

As in the previous case we let $\Gamma_n$ and $\Gamma_{n-1}$ be the Dynkin diagrams associated to types 
$C_n$ and $C_{n-1}$ 
with the vertices of $\Gamma_n$ labeled from 1 to $n$ and the vertices from $\Gamma_{n-1}$ 
labeled from 2 to $n$. 
Consider the inclusion $\Gamma_{n-1}\hookrightarrow\Gamma_n$ 
that adds a vertex labeled 1 at the beginning of $\Gamma_{n-1}$ and the corresponding edge, \emph{i.e.}   
\begin{equation}\label{eq:dynkinplusC}
\labellist
\pinlabel $\dotsc$  at 118 60.5    
\tiny \hair 2pt 
\pinlabel $2$   at   7 36  
\pinlabel $n-2$ at 215 35 
\pinlabel $n-1$ at 288 35 
\pinlabel $n$   at 364 35 
\endlabellist
\figins{-19}{0.65}{dynkinC}
\mspace{35mu}\hookrightarrow\mspace{35mu}  
\labellist
\pinlabel $\dotsc$  at 188 60.5    
\tiny \hair 2pt 
\pinlabel $1$   at   7 36  
\pinlabel $2$   at  78 36 
\pinlabel $n-2$ at 287 35 
\pinlabel $n-1$ at 364 35 
\pinlabel $n$   at 435 35 
\endlabellist
\figins{-19}{0.65}{dynkinnC} 
\end{equation}

As a quantum $\mathfrak{sp}_{2n-2}$-representation,
\begin{equation}\label{eq:brCC}
\bigl(V_\lambda^{\spp}\bigr)^{\mathfrak{sp}_{2n-2}} \cong \bigoplus\limits_{\mu} c(\mu) V_\mu^{\mathfrak{sp}_{2n-2}} ,
\end{equation}
where $\mu=(\mu_2,\dotsc ,\mu_n)$ is in $\Lambda_+^{\mathfrak{sp}_{2n-2}}$
and
the multiplicity $c(\mu)$ is the number of 
$\nu = (\nu_1,\dotsc ,\nu_n)$ satisfying 
\begin{align*}
\lambda_1 \geq \nu_1 \geq \lambda_2 \geq \dotsm \geq \nu_{n-1} \geq \lambda_n \geq \nu_n \geq 0
\\
\nu_1 \geq \mu_2 \geq \nu_2\geq \dotsm \geq\nu_{n-1}\geq \mu_{n} \geq \nu_n \geq 0
\end{align*}
This can be depicted schematically as the \emph{Gelfand-Tsetlin pattern} 
of type $C_n$
\begin{equation}\label{eq:GTpattC}
%\bket{
\left\vert
\begin{array}{cccccccccc}
\lambda_1 & & \lambda _2 & & \lambda_3 & & \dotsm & & \lambda_n &
\\
& \nu_1 & & \nu_2 & &\dotsm && \dotsm   & & \nu_n 
\\
 & & \mu _2 & & \mu_3 & & \dotsm & & \mu_n & 
\end{array}%}
\right.\xy/r0.62pt/:
(0,37)*{}; (12,0)*{};  **\dir{-};(0,37)*{}; (12,0)*{};  **\dir{-};(0,37)*{}; (12,0)*{};  **\dir{-};
(12,0)*{}; (0,-37); **\dir{-}; (12,0)*{}; (0,-37); **\dir{-}; (12,0)*{}; (0,-37); **\dir{-}
\endxy \ ,
\end{equation}
where, for each triple of symbols $a_{i-1}$, $a_i$ and $a_{i+1}$ placed respectively in column 
$i-1$, $i$ and $i+1$, we have the inbetweenness condition
\begin{equation}\label{eq:inbetC}
a_{i-1} \geq a_i \geq a_{i+1} .
\end{equation}
The set $\tau_{C_n}(\lambda)$ denotes the set of all such $(\nu,\mu)$. 
We stress that the $\nu$s appearing above are not to be be interpreted as  
$\spp$-highest weights.

\smallskip

In the sequel it will be useful to interpret the GT-pattern in~\eqref{eq:GTpattC} as 
a sequence of maps of 
$\mathfrak{sp}_{2n-2}$-representations  as follows:  
\begin{equation*}
V^{\spp}_\lambda 
\xra{\phi_{\lambda_1-\nu_1}} V^{\spp}_{(\nu_1,\lambda_2,\dotsc,\lambda_n)} 
\xra{\phi_{\nu}} V^{\spp}_{(\nu_1,\nu_2,\dotsc,\nu_n)}  
\xra{\phi_{\mu}} V^{\mathfrak{sp}_{2n-2}}_{(\mu_2,\dotsc,\mu_n)}  .
\end{equation*}

Define the lattice maps 
$\xi_{-1}\colon \bZ^n\to \bZ^n$, 
$\xi_{-i}\colon \bZ^n\to \bZ^{n-1}$ ($i>1$),
and 
 $\xi_{-i}\colon \bZ^{n-1}\to \bZ^{n-1}$
by
\begin{align*}
\xi_{-i}(\lambda) &= 
\begin{cases} 
(\lambda_1-1, \lambda_2, \dotsc ,\lambda_n) 
\\[0.5ex] 
(\lambda_2, \dotsc ,\lambda_{i}-1,\dotsc,\lambda_n) 
\end{cases} 
& \begin{matrix}  i = 1 \\[1.2ex] 1<i\leq n \end{matrix}
\\[1ex]
\ \ \xi_{+i}(\nu) & =   (\nu_2, \dotsc ,\nu_{i+1}+1,\dotsc,\nu_n)    & 1 \leq i < n
\end{align*}
and
\begin{equation*}
 \xi_{-\varnothing}(\lambda) = (\lambda_2,\dotsc ,\dotsc,\lambda_n) ,
\mspace{40mu}
\xi_{+\varnothing}(\nu) = \nu .
\end{equation*}

For nonempty sequences $\und{i}^m=(i_1,\dotsc ,i_m)$  and
$\und{t}^\ell = (t_1,\dotsc ,t_\ell)$ with $t_j\neq 1$ for $j=1,\dotsc ,\ell$ 
we write  
\begin{equation*} 
\xi_{+\und{i}^m} = \xi_{+i_m}\circ \dotsm\circ\xi_{+i_2}\circ\xi_{+i_1}, 
\qquad
\xi_{-\und{t}^{\ell}} = \xi_{-t_\ell}\circ \dotsm\circ\xi_{-t_2}\circ\xi_{-t_1}
\end{equation*}
and
$$\xi_{-\und{t}^{\ell},+\und{i}^m} = \xi_{+\und{i}^m}\circ\xi_{-\und{t}_\ell} .$$

For $0 \leq c\leq \lambda_1-\lambda_2$ we write
$\tau_{C_n}(\lambda,c) = \tau_{C_n}(\lambda)\vert_{\nu_1 =\lambda_1-c}$ 

\begin{defn}\label{def:admC}
The map  $\xi_{-\und{t}^\ell,+\und{i}^m}$ is said to be \emph{$\lambda c$-admissible} 
%if $t_{j}\neq 1$ for  $j=1,\dotsc ,\ell$ and 
$1<t_1\leq t_2 \leq \dotsc \leq t_\ell$ and 
$i_{m}\leq i_{m - 1} \leq \dotsc \leq i_1$  
and if
$\bigl( (\lambda_1-c) \times \xi_{-\und{t}^\ell}(\lambda),\xi_{-\und{t}^\ell,+\und{i}^m}(\lambda)\bigr)$ 
is in $\tau_{C_n}(\lambda,c)$.  
For fixed $k=m+\ell$ and $c$ we denote by $\cD_{C_n,\lambda,c}^{k}$ the set of all such maps. 
\end{defn}
Here $(a) \times (b_1,\dotsc ,b_n)=(a, b_1,\dotsc ,b_n)$ is the concatenation of sequences. 
The set $\tau_{C_n}(\lambda)$ can then be seen as the set 
$$\cD_{C_n}=\bigsqcup\limits_{k\geq 0}\bigsqcup\limits_{c=0}^{\lambda_1-\lambda_2}\cD_{C_n,\lambda,c}^k.$$ 
Hence we can write $\xi_{(\mu,\nu)}$ for the corresponding $\xi_{-\und{t}^{\ell-c},+\und{i}^m}$
without ambiguity.

Each  $\xi_{-\und{t}^\ell+\und{i}^m}$ in $\cD_{\lambda}$ ($\lambda$-admissible) 
is of the form $\xi_{-\und{t}^{\ell-c},+\und{i}^m}\circ\xi_{-1}^c$
where $\xi_{-1}^c$ is $\xi_{-1}\circ\dotsm\circ \xi_{-1}$ ($c$ times) 
and $\xi_{-\und{t}^{\ell-c},+\und{i}^m}$ is in $\cD_{\lambda,c}$.

%%%%%%%%%%%%%%%%%%%%%%%%%%%%%%%%%%%%%%%%%%%%%%%%%%%%%
\subsubsection{The Gelfand-Tsetlin basis}\label{ssec:GTC}

The complete Gelfand-Tsetlin pattern for the QEA of type $C_n$ is
\begin{equation*}
\ket{\lambda^{(1)}, \nu^{(1)},\lambda^{(2)},\nu^{(2)}, \dotsc ,\lambda^{(n)},\nu^{(n)}}
\end{equation*}
where 
\begin{align*}
\lambda^{(i)} &= \bigl\{(\lambda_i^{(i)},\dotsc ,\lambda_n^{(i)})\;\vert\; 
\lambda_i^{(i)}\geq \lambda_{i+1}^{(i)}\geq \dotsm \geq \lambda_{n-1}^{(i)}\geq \lambda_n^{(i)}\geq 0\; \bigr\}
\\[1ex]
\nu^{(i)} &= \bigl\{(\nu_i^{(i)},\dotsc ,\nu_n^{(i)})\;\vert\; 
\nu_i^{(i)}\geq \nu_{i+1}^{(i)}\geq \dotsm \geq \nu_{n-1}^{(i)}\geq \nu_n^{(i)}\geq 0\;  \bigr\}
\end{align*}
and where each triple $(\lambda^{(i)}, \nu^{(i)}, \lambda^{(i+1)})$ 
satisfies the inbetweenness condition~\eqref{eq:inbetC}.

Schematically we depict a complete GT-pattern as
\begin{equation*}
%\bket{
\left\vert 
\begin{array}{ccccccccc}
\lambda_1^{(1)} & & \lambda_2^{(1)} & & \lambda_3^{(1)} &   & & \lambda_n^{(1)} & 
\\ 
& \nu_1^{(1)} & & \nu_2^{(1)} & & \dotsm  & & & \nu_n^{(1)} 
\\
 & & \lambda _2^{(2)} & & \lambda_3^{(2)} &   & & \lambda_n^{(2)} & 
\\ 
&  & & \nu_2^{(2)} & & \dotsm &  & & \nu_n^{(2)} 
%\\ 
%& &  & & \lambda_3^{(3)} & & \dotsm & & \lambda_n^{(3)} & & -\lambda_n^{(3)}
%\\ 
%&  & & \nu_2^{(2)} & & \dotsm & & & & \nu_n^{(2)} &
\\[1ex] 
 & & & & &  & & & \vdots   
\\[1ex]  
& &  & &  & &  & \lambda_n^{(n)} & 
\\ 
&  & &  & &  & & & \nu_n^{(n)} 
\end{array}%}
\right.\xy/r1.78pt/:
(0,37)*{}; (12,0)*{};  **\dir{-};(0,37)*{}; (12,0)*{};  **\dir{-};(0,37)*{}; (12,0)*{};  **\dir{-};
(12,0)*{}; (0,-37); **\dir{-}; (12,0)*{}; (0,-37); **\dir{-}; (12,0)*{}; (0,-37); **\dir{-}
\endxy  \ , 
\end{equation*}
with entries satisfying the inbetweenness relation~\eqref{eq:inbetC}.

As before we denote by $\cS(\lambda)$ the set of all complete Gelfand-Tsetlin patterns for  $V_\lambda^{\spp}$ 
and by $V_{\cS(\lambda)}$ the $\bQ(q)$-vector space spanned by $\cS(\lambda)$. 
There is an actio of $\spp$ on $V_{\cS(\lambda)}$ such that 
the $\spp$-modules $V_{\cS(\lambda)}$ and $V_\lambda^{\spp}$ are isomorphic. 
Explicit formulas for the $\spp$-action can be found in~\cite{molev}.

\section{KLR algebras and their cyclotomic quotients}\label{sec:KLRalgebras}

In this section we describe the quiver Hecke algebras which were introduced by Khovanov and Lauda in~\cite{KL1,KL2} 
and independently by Rouquier in~\cite{Rouq1}. We concentrate on the particular case of types $B_n$, 
$C_n$ and $D_n$.    
The KLR algebra $R_{\Gamma}$ associated to the quiver $\Gamma$ 
is the algebra generated by $\Bbbk$-linear combinations of isotopy classes of 
braid-like planar diagrams where each strand is labeled by a simple root of $\mathfrak{g}$. 
Strands can intersect transversely to form crossings and they can also carry dots. 
Multiplication is given by concatenation of diagrams and the collection of such diagrams is subject 
to relations~\eqref{eq:R2}-\eqref{eq:dotslide} below  
(for the sake of simplicity we write $i$ instead $\alpha_i$ when labeling a strand). 
We read, by convention, diagrams from bottom to top (and from left to right) and therefore the diagram 
for the product $a.b$ is the diagram obtained by stacking the diagram for $a$ 
on the top of the one for $b$. 
%%%%%%%%%% R2 move %%%%%%%%%%%%%
\begin{equation}\label{eq:R2}
\labellist
\tiny \hair 2pt
\pinlabel $i$ at -5 0
\pinlabel $j$ at 61 -2
\endlabellist
\figins{-23}{0.7}{R2}\ \
=\ \
\begin{cases}
\allowdisplaybreaks
\mspace{35mu}0 
&\mspace{20mu}\text{ if }\ \ i = j\ ,  
\\[1ex]
\mspace{20mu}
\labellist
\tiny \hair 2pt
\pinlabel $i$ at -5 0
\pinlabel $j$ at 61 -2
\endlabellist 
\figins{-23}{0.7}{id2-dl}
\quad +\quad 
\labellist
\tiny \hair 2pt
\pinlabel $i$ at -5 0
\pinlabel $j$ at 61 -2
\endlabellist 
\figins{-23}{0.7}{id2-dr}
&\mspace{20mu}\text{if }\ \ 
\labellist
\tiny \hair 2pt
\pinlabel $i$  at  4 22
\pinlabel $j$  at 70 20
\endlabellist
\figins{-16}{0.55}{spl-edge}\ ,
\\[6ex]
\mspace{20mu}
\labellist
\tiny \hair 2pt
\pinlabel $i$ at -5 0
\pinlabel $j$ at 61 -2
\endlabellist 
\figins{-23}{0.7}{id2-ddl}
\quad +\quad 
\labellist
\tiny \hair 2pt
\pinlabel $i$ at -5 0
\pinlabel $j$ at 61 -2
\endlabellist 
\figins{-23}{0.7}{id2-dr}
&\mspace{20mu}\text{if }\ \ 
\labellist
\tiny \hair 2pt
\pinlabel $i$  at  4 22
\pinlabel $j$  at 70 20
\endlabellist
\figins{-16}{0.55}{db-edgel}\ ,
\\[6ex]
\mspace{20mu}
\labellist
\tiny \hair 2pt
\pinlabel $i$ at -5 0
\pinlabel $j$ at 61 -2
\endlabellist 
\figins{-23}{0.7}{id2}
&\mspace{20mu}\text{else}\ ,
\end{cases} 
\end{equation}

\medskip

%%%%%%%%%% dot moves %%%%%%%%%%%%%
%
\begin{align}
\label{eq:dotslide}
\labellist
\tiny \hair 2pt
\pinlabel $i$ at -5   0
\pinlabel $j$ at 97  -2
\endlabellist
\figins{-20}{0.60}{dotslide-lu}
\ \ - \ \
\labellist
\tiny \hair 2pt
\pinlabel $i$ at -5   0
\pinlabel $j$ at 97  -2
\endlabellist
\figins{-20}{0.60}{dotslide-rd}
\ \ = \ \ 
\delta_{ij}\ \ \   
\labellist
\tiny \hair 2pt
\pinlabel $i$ at -5   0
\pinlabel $j$ at 97  -2
\endlabellist
\figins{-20}{0.60}{id2larger}
\quad = \quad  
\labellist
\tiny \hair 2pt
\pinlabel $i$ at -5   0
\pinlabel $j$ at 97  -2
\endlabellist
\figins{-20}{0.60}{dotslide-ld}
\ \ - \ \
\labellist
\tiny \hair 2pt
\pinlabel $i$ at -5   0
\pinlabel $j$ at 97  -2
\endlabellist
\figins{-20}{0.60}{dotslide-ru}
\vspace*{4ex}
\end{align}
%%%%%%%%%% dots slides

\medskip
%%%%%%%%%% R3 moves %%%%%%%%%%%%%
%
\begin{equation}\label{eq:R3}
\labellist
\tiny \hair 2pt
\pinlabel $i$ at  -5 0
\pinlabel $j$ at  72 -2
\pinlabel $k$ at 132 -2
\endlabellist
\figins{-23}{0.7}{R3-l}
\ \ - \ \
\labellist
\tiny \hair 2pt
\pinlabel $i$ at  -5 0
\pinlabel $j$ at  52 -2
\pinlabel $k$ at 132 -2
\endlabellist
\figins{-23}{0.7}{R3-r}
\ \ = \ \ \ 
\begin{cases}
\mspace{20mu} 
\labellist
\tiny \hair 2pt
\pinlabel $i$ at  -5 0
\pinlabel $j$ at  68 -2
\pinlabel $k$ at 124 -2
\endlabellist
\figins{-23}{0.7}{id3} & \text{if }\ \ i=k\text{ and }\ \ \  
\substack{
\labellist
\tiny \hair 2pt
\pinlabel $i$  at  4 22
\pinlabel $j$  at 70 20
\endlabellist
\figins{-16}{0.55}{spl-edge}\ ,
\\
\labellist
\pinlabel $\text{or}$ at 39 88
\tiny \hair 2pt
\pinlabel $i$  at  4 22
\pinlabel $j$  at 70 20
\endlabellist
\figins{-16}{0.55}{db-edger}\ ,
}
\\[8ex]
\mspace{20mu} 
\labellist
\tiny \hair 2pt
\pinlabel $i$ at  -5 0
\pinlabel $j$ at  68 -2
\pinlabel $k$ at 124 -2
\endlabellist
\figins{-23}{0.7}{id3-doo}
\ \ + \ \  
\labellist
\tiny \hair 2pt
\pinlabel $i$ at  -5 0
\pinlabel $j$ at  68 -2
\pinlabel $k$ at 124 -2
\endlabellist
\figins{-23}{0.7}{id3-ood}
& \text{if }\ \ i=k\text{ and }\ \ 
\labellist
\tiny \hair 2pt
\pinlabel $i$  at  4 22
\pinlabel $j$  at 70 20
\endlabellist
\figins{-16}{0.55}{db-edgel}\ ,
\\[8ex]
\mspace{55mu}
0 & \text{else}\ ,
\end{cases}
\end{equation}

\medskip

Algebra $R_{\Gamma}$ is graded with the degrees given by 
\begin{align}
\deg\biggl(\  
\labellist
\tiny \hair 2pt
\pinlabel $i$ at -5   0
\pinlabel $j$ at 97  -2
\endlabellist
\figins{-10}{0.40}{Xing}\ 
\biggr) = -d_ic_{ij}
\mspace{60mu}
\deg\biggl(\  
\labellist
\tiny \hair 2pt
\pinlabel $i$ at -5   0
\endlabellist
\figins{-10}{0.40}{dot}\ 
\biggr) = 2d_{i} .
\vspace*{4ex}
\end{align}

\medskip

\n The following useful relation follows from~\eqref{eq:dotslide} and will be used in the sequel.
\begin{align}
\label{eq:dotslides}
\labellist
\tiny \hair 2pt
\pinlabel $i$ at -5 0  \pinlabel $i$ at 97 0 \pinlabel $d$ at 0 90
\endlabellist
\figins{-20}{0.60}{dotslide-lu}
\ \ - \ \
\labellist
\tiny \hair 2pt
\pinlabel $i$ at -5 0 \pinlabel $i$ at 97 0 \pinlabel $d$ at 90 35
\endlabellist
\figins{-20}{0.60}{dotslide-rd}
\ \ = \ \ 
\sum_{\ell_1+\ell_2 = d-1}\ \ \   
\labellist
\tiny \hair 2pt
\pinlabel $i$ at -5 0 \pinlabel $i$ at 97 0 
\pinlabel $\ell_1$ at 32 70 \pinlabel $\ell_2$ at 103 70
\endlabellist
\figins{-20}{0.60}{id2largerd}
\quad = \quad  
\labellist
\tiny \hair 2pt
\pinlabel $i$ at -5 0 \pinlabel $i$ at 97 0 \pinlabel $d$ at 0 35
\endlabellist
\figins{-20}{0.60}{dotslide-ld}
\ \ - \ \
\labellist
\tiny \hair 2pt
\pinlabel $i$ at -5 0 \pinlabel $i$ at 97 0 \pinlabel $d$ at 90 90
\endlabellist
\figins{-20}{0.60}{dotslide-ru}
\vspace{4ex}
\end{align}

\medskip

For $\beta = \sum\limits_{j\in\Gamma}\beta_i \alpha_i$ let $R_{\Gamma}(\beta)$ be the subalgebra generated by all 
elements 
of $R_{\Gamma}$ containing exactly $\beta_i$ strands labeled $i$.
We have  
\begin{equation*}
R_{\Gamma} = \sum\limits_{\beta\in\Lambda_+(\g)}R_\Gamma(\beta) .
\end{equation*}
We also denote by
\begin{equation*}
R_{\Gamma}(k\alpha_1) = \bigoplus\limits_{\beta'\in\Lambda_+(\g)/\bZ\alpha_1} R_{\Gamma}(\beta' + k\alpha_1) 
\end{equation*}
the subalgebra of $R_{\Gamma}$ containing exactly $k$ strands labeled $1$. 
With this notation we have 
\begin{equation}\label{eq:decKLR1}
R_{\Gamma} = \bigoplus\limits_{k\geq 0}R_{\Gamma}(k\alpha_1)  .
\end{equation}
%Sometimes its convenient to use an algebraic notation. 
For a sequence $\und{i}=(i_1,\dotsc , i_k)$ with $i_j$ corresponding to the simple root $\alpha_{i_j}$ we write 
$1_{\und{i}}$ for the idempotent formed by $k$ vertical strands with labels 
in the order given by $\und{i}$, that is 
\begin{equation*}
1_{\und{i}}\ = 
\labellist
\pinlabel $\dotsc$ at  145 65   
\tiny \hair 2pt
\pinlabel $i_1$   at   2 12
\pinlabel $i_2$   at  46 12
\pinlabel $i_3$   at  88 12
\pinlabel $i_k$   at 198 12
\endlabellist
\mspace{15mu}
\figins{-34}{0.9}{idan} 
\end{equation*}
%
%We use the notation $1_{i_1\dotsm i_k}$ for the case when $i_{r+1}=i_r+1$ for $r=1,\dotsc, k-1$ in the $1_{\und{i}}$ as above.  
We write $1_{*\ell *}$ for $1_{\und{i}}$ if the sequence of 
labels $\und{i}=\und{j}\und{\ell}\und{j}'$ can be written as a concatenation of certain sequences and  
we are only interested in the $\und{\ell}$ part and the $*$ can be arbitrary.
We also write $x_{r,\und{i}}$ for the diagram consisting of a dot on the $r$-th strand of $1_{\und{i}}$,
\begin{equation*}
x_{r,\und{i}}\ = 
\labellist
\pinlabel $\dotsc$ at   45 50   
\pinlabel $\dotsc$ at  135 50   
\tiny \hair 2pt
\pinlabel $i_1$   at   7 -10
\pinlabel $i_r$   at  92 -10
\pinlabel $i_k$   at 178 -10
\endlabellist
\mspace{15mu}
\figins{-19}{0.6}{idandot}
\end{equation*}

\medskip

For $\beta$ as above we denote by $\seq(\beta)$ the set of all sequences $\und{i}$ of simple roots 
in which $i_j$ appears exactly $\beta_j$ times. 
The identity of $R_{\Gamma}(\beta)$ is then given by 
\begin{equation*}
1_{R_{\Gamma}(\beta)} =
\sum_{\und{i}\:\in\:\seq(\beta)}
\labellist
\pinlabel $\dotsc$ at  145 65   
\tiny \hair 2pt
\pinlabel $i_1$   at   2 12
\pinlabel $i_2$   at  46 12
\pinlabel $i_3$   at  88 12
\pinlabel $i_k$   at 198 12
\endlabellist
\mspace{15mu}
\figins{-34}{0.9}{idan} 
\end{equation*}

\medskip

For an idempotent $e\in R_{\Gamma}$ there is a (right) projective module $_{e}P=eR_{\Gamma}$. 
For $e=1_{\und{i}}$ this is the projective spanned by all diagrams whose labels  
end up in the sequence $\und{i}$.  The definition of the left projective $P_e$ is similar. 

Denote by $R_{\Gamma}\amod$ and $R_{\Gamma}\prmod$ the categories of finitely generated, graded, right $R_{\Gamma}$-modules
and of finitely generated, graded, projective right $R_{\Gamma}$-modules, respectively. 
For idempotents $e$, $e'$ we have 
\begin{equation*}
\Hom_{R_{\Gamma}\amod}\bigl({}_{e}P, ~{}_{e'\!}P \bigr) = e'R_{\Gamma}e  . 
\end{equation*}

\medskip

For a graded algebra $A$ we denote by $K'_0(A)$ the Grothendieck group of finitely generated, graded, projective 
$A$-modules and write $K_0(A)$ for $\bQ(q)\otimes_{\bZ[q,q^{-1}]} K'_0(A)$.  

\smallskip

\begin{thm}[Khovanov-Lauda~\cite{KL1}, Rouquier~\cite{Rouq1}] 
The Grothendieck $K_0(R_{\Gamma})$ is isomorphic to the lower half 
$U_q^-(\g)$ through the map that takes $[{}_{\und{i}}P]$ to $F_{\und{i}}$. 
\end{thm}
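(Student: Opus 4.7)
The plan is to define a $\bQ(q)$-algebra map $\gamma\colon U_q^-(\g)\to K_0(R_\Gamma)$ on generators by $F_i\mapsto [{}_{i}P]$ and then prove it is an isomorphism. First I would equip $K_0(R_\Gamma)=\bigoplus_{\beta}K_0(R_\Gamma(\beta))$ with a $\bQ(q)$-algebra structure in which $q$ acts by grading shift of projectives, and the product of classes $[P]\in K_0(R_\Gamma(\beta))$, $[P']\in K_0(R_\Gamma(\beta'))$ is the class of the induced module $R_\Gamma(\beta+\beta')\otimes_{R_\Gamma(\beta)\otimes R_\Gamma(\beta')}(P\boxtimes P')$. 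Horizontal concatenation of KLR diagrams makes this well defined and associative, and under this structure one has $\gamma(F_{\und{i}})=[{}_{\und{i}}P]$, so the map is determined on generators.

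Next I would verify that $\gamma$ respects the quantum Serre relations (the only non-trivial defining relations of $U_q^-(\g)$). Using \eqref{eq:R2}, \eqref{eq:dotslide} and \eqref{eq:R3} the projective ${}_{i^a}P$ decomposes as $[a]_i!$ copies (with appropriate grading shifts) of an indecomposable ${}_{i^{(a)}}P$, reflecting the idempotent decomposition of the nilHecke subalgebra on $a$ strands labelled $i$. A parallel bookkeeping of ${}_{i^a j i^b}P$, keeping track of the degree shifts $-d_i c_{ij}$ on crossings and $2d_i$ on dots, yields
\begin{equation*}
\sum_{a+b=-c_{ij}+1}(-1)^a\,[{}_{i^{(a)} j i^{(b)}}P]=0
\end{equation*}
in $K_0(R_\Gamma)$, which is precisely the image of the Serre relation. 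Hence $\gamma$ is a well-defined algebra homomorphism.

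Surjectivity is then almost immediate: the decomposition $1_{R_\Gamma(\beta)}=\sum_{\und{i}\in\seq(\beta)} 1_{\und{i}}$ expresses every indecomposable projective over $R_\Gamma(\beta)$ as a direct summand of some ${}_{\und{i}}P$, so the classes $\gamma(F_{\und{i}})=[{}_{\und{i}}P]$ span $K_0(R_\Gamma)$ over $\bQ(q)$.

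The principal obstacle is injectivity, and this is where the genuine work lies. For this I would establish a basis theorem for $R_\Gamma(\beta)$ by constructing a faithful polynomial representation on $\bigoplus_{\und{i}\in\seq(\beta)}\Bbbk[x_1,\dotsc,x_k]\cdot 1_{\und{i}}$, in which crossings act by Demazure-type divided difference operators (twisted according to the quiver data) and dots act by multiplication. Faithfulness, combined with a straightening/reduced-word argument, yields a PBW-style $\bZ$-basis of $R_\Gamma(\beta)$ indexed by pairs (reduced expression, multi-index of dot exponents). Extracting the graded dimension of each indecomposable projective from this basis produces a Hilbert series that matches, weight by weight, the graded character of $U_q^-(\g)_\beta$ computed from the Kostant partition function / PBW basis on the algebra side. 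Once these Hilbert series coincide in every weight, the already surjective map $\gamma$ is forced to be injective, completing the proof.
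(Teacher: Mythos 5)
The theorem you are asked to prove is not proved in this paper at all; it is a cited result, attributed to Khovanov--Lauda and Rouquier, and the paper simply invokes it as a black box (it plays the same role for $K_0(R_\Gamma)$ that Theorem~\ref{thm:KKW} plays for $K_0(R_\Gamma^\lambda)$). So there is no in-paper proof to compare yours against, and a reviewer can only judge whether your sketch correctly reconstructs the strategy of~\cite{KL1}.

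Your outline does track the Khovanov--Lauda strategy: the product on $K_0(R_\Gamma)$ by induction of projectives, $q$ acting by grading shift, the Serre relation categorified via the nilHecke decomposition ${}_{i^a}P\cong [a]_i!\,{}_{i^{(a)}}P$ and a parallel analysis of ${}_{i^aji^b}P$, and surjectivity from $1_{R_\Gamma(\beta)}=\sum_{\und{i}}1_{\und{i}}$. One mild imprecision: the map $\gamma$ is not a homomorphism of associative algebras in the naive sense but a map of twisted bialgebras (Lusztig's $\mathbf{f}$ has a twisted multiplication on the tensor square), though this does not affect the argument you outline.

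The place where your sketch is genuinely muddled is the injectivity step, which is the real content. You write that the faithful polynomial representation gives a PBW-style basis of $R_\Gamma(\beta)$, and that "the Hilbert series of each indecomposable projective matches, weight by weight, the graded character of $U_q^-(\g)_\beta$." That comparison is not the one that closes the argument: the Hilbert series of a single projective has no direct relation to the dimension of a weight space of $U_q^-$, and a surjection $U_q^-(\g)_\beta\twoheadrightarrow K_0(R_\Gamma(\beta))$ of finite-dimensional $\bQ(q)$-spaces would only be injective after one independently knows that the rank of $K_0(R_\Gamma(\beta))$, i.e.\ the number of graded simples up to shift, equals $\dim U^-(\g)_\beta$ — which the basis theorem alone does not give. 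What~\cite{KL1} actually does is show, using the basis theorem, that
$$\gdim\bigl(1_{\und{j}}R_\Gamma(\beta)1_{\und{i}}\bigr)=\langle F_{\und{i}},F_{\und{j}}\rangle,$$
the right side being Lusztig's bilinear form on $\mathbf{f}$. This makes $\gamma$ an isometry onto its image for the form $\langle [P],[P']\rangle=\gdim\Hom(P,P')$, and injectivity then follows because Lusztig's form is nondegenerate on $U_q^-(\g)$. You should replace the "Hilbert series of each projective matches the graded character" claim with this bilinear-form comparison; as written, that sentence is a gap, not a proof.
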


%%%%%%%%%%%%%%%%%%%%%%%%%%%%%%%%%%%%%%%%%%%%%%%%%%%%%%%%%%%%
\subsection{Categorical inclusion and projection for KLR algebras}

Le $\Gamma_n$ and $\Gamma_{n-1}$ the Dynkin diagrams associated to $\g_n$ and $\g_{n-1}$ 
respectively and consider the inclusion $\Gamma_{n-1}\hookrightarrow\Gamma_n$ 
that adds a vertex and the corresponding edge at the beginning of $\Gamma_{n-1}$, 
like the ones in Equations~\eqref{eq:dynkinplusB},~\eqref{eq:dynkinplusD} and~\eqref{eq:dynkinplusC}.   
This induces an inclusion of KLR algebras 
\begin{equation*}
\imath\colon R_{\g_{n-1}}\hookrightarrow R_{\g_{n}}
\mspace{50mu}
x\mapsto x
\end{equation*}
which coincides with the obvious map coming from the decomposition
\begin{equation}\label{eq:decKLR2}
R_{\g_{n}} = \bigoplus\limits_{k\geq 0}R_{\g_{n}}(k\alpha_1) 
\cong 
R_{\g_{n-1}} + \bigoplus\limits_{k\geq 1}R_{\g_{n}}(k\alpha_1). 
\end{equation} 
As in the case of $\g=\sll$ (see~\cite{vaz2}) the functors of inclusion and restriction induced by $\imath$ 
\begin{align*}
\Ind_\imath \colon R_{\g_{n-1}}\amod \to R_{\g_n}\amod
\mspace{60mu}%\\[1ex]
\Res_\imath \colon R_{\g_n}\amod \to R_{\g_{n-1}}\amod
\end{align*}
are biadjoint, take projectives to projectives 
and descend to the natural inclusion and projection maps between the Grothendieck groups.

%%%%%%%%%%%%%%%%%%%%%%%%%%%%%%%%%%
%%%                            %%%
%%%       Cyclotomic KLR       %%%
%%%                            %%%
%%%%%%%%%%%%%%%%%%%%%%%%%%%%%%%%%%

%%%%%%%%%%%%%%%%%%%%%%%%%%%%%%%%%%%%%%%%%%%%%%%%%%%%%%%%%%%%
\subsection{Cyclotomic KLR-algebras}\label{ssec:cycKLR}

Fix a $\g$-highest weight 
$\lambda$ from now on and 
let $I^{\lambda}$ the two-sided ideal of $R_{\g}$ genera\-ted by $x^{\llambda_{i_1}}_{1,\und{i}}$ 
for all sequences $\und{i}$. 

\begin{defn}
The cyclotomic KLR algebra $R_{\g}^\lambda$ is the quotient of 
$R_{\g}$ by the two-sided ideal $I^{\lambda}$. 
\end{defn}
In terms of diagrams $R_{\g}^\lambda$ is the quotient by the two-sided ideal generated by all the diagrams 
of the form 
\begin{equation*}
\labellist
\pinlabel $\lambda$ at  -64 74  
\pinlabel $\dotsc$  at  134 63   
\tiny \hair 2pt
\pinlabel $\llambda_{j_1}$  at  -12 66
\pinlabel $j_1$  at   2 10  \pinlabel $j_2$   at  46 10
\pinlabel $j_3$  at  90 10  \pinlabel $j_k$   at 180 10
\endlabellist
\figins{-19}{0.70}{cyclid} 
\end{equation*}
where the leftmost strand has $\llambda_{j_1}$ dots on it. 
We always label the leftmost region of a diagram with the partition $\lambda$ 
to indicate it is in $R_{\g}^\lambda$. 
The following was proved in~\cite{W1}.
\begin{lem}\label{lem:frobKLR}
The cyclotomic KLR algebra $R_{\g}^\llambda$ is Frobenius. 
\end{lem}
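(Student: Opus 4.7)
The plan is to follow Webster's approach in~\cite{W1}. The argument breaks naturally into three parts: finite-dimensionality, construction of a trace, and non-degeneracy of the induced bilinear form.

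First, I would verify that $R_{\g}^\llambda$ is finite-dimensional. The standard KLR spanning result writes any element of $1_{\und{j}}R_\g 1_{\und{i}}$ as a $\Bbbk$-linear combination of products $\psi_w \cdot x^\alpha \cdot 1_{\und{i}}$, where $\psi_w$ is a minimal diagram realising a permutation $w\in S_k$ and $x^\alpha$ is a monomial in the dots. Combining this with the cyclotomic relation --- which bounds the exponent of the dot on the leftmost strand by $\llambda_{i_1}$ --- and using the dot-sliding identities~\eqref{eq:dotslide} and~\eqref{eq:dotslides} to transport dots from interior strands to the leftmost one modulo correction terms of strictly lower complexity, one concludes by induction that every component $1_{\und{j}}R_\g^\llambda 1_{\und{i}}$ has finite dimension.

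Second, I would construct a Frobenius trace $\tau\colon R_\g^\llambda\to\Bbbk$ diagrammatically. The idea is to cap off the strands of a diagram from the top using the counits of the biadjunctions between the induction and restriction functors that implement the categorified Chevalley generators $F_i$ and $E_i$ on the tower $\{R_\g^\llambda\amod\}$. Iterating these counits reduces any element to the weight-zero component, which is canonically isomorphic to $\Bbbk$, up to a well-defined overall grading shift depending only on $\llambda$. The resulting bilinear form $(x,y)_\tau:=\tau(xy)$ is associative because of the compatibility axioms of the biadjunction data, and one checks that it has the correct symmetry to be Frobenius.

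Third, I would prove non-degeneracy of $(\cdot,\cdot)_\tau$. Under the categorification isomorphism $K_0(R_\g^\llambda)\cong V^{\g}_\llambda$, the form $(\cdot,\cdot)_\tau$ descends to the $q$-Shapovalov form on the irreducible highest weight module $V^{\g}_\llambda$; the factor coming from the biadjunction counits matches the twist used in the definition of $\phi$ earlier in the paper. Since the Shapovalov form is non-degenerate on $V^{\g}_\llambda$ for dominant $\llambda$, one can lift this to the algebra level by pairing each indecomposable projective module with an explicit dual object obtained by reversing the capping procedure used to define $\tau$.

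The main obstacle is precisely the non-degeneracy step. Finite-dimensionality and the very construction of $\tau$ are of a bookkeeping nature; excluding a non-trivial left annihilator of $\tau$ inside the algebra, however, requires either an explicit diagrammatic calculation pairing a chosen basis against a dual basis, or a careful application of the categorification theorem to transfer non-degeneracy of the Shapovalov form on $V^{\g}_\llambda$ back to $R_\g^\llambda$ itself. The route taken in~\cite{W1} is the former, and it transposes verbatim to the present setting since the local KLR relations in types $B_n$, $C_n$ and $D_n$ are covered by Webster's framework.
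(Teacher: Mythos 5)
The paper does not actually prove this lemma: it simply states it and cites Webster~\cite{W1}, so there is no internal proof to compare against. Your proposal to follow Webster's argument is therefore the intended route, and your outline (finite-dimensionality from the basis theorem plus the cyclotomic relation; a trace built from the adjunction data for $E_i$, $F_i$; non-degeneracy) does capture the right ingredients. A few cautions if you were to flesh this out. In Part~2, the counit of the biadjunction $F_iE_i\dashv\mathrm{id}$ is a bimodule map, not a linear functional on $R_{\g}^\llambda$, so ``iterating counits to land in the weight-zero component'' does not directly define a trace $\tau\colon R_{\g}^\llambda\to\Bbbk$; what the biadjunctions actually give you is that the graded-dual functor $M\mapsto M^\circledast$ sends projectives to projectives (equivalently, that the Serre functor of $R_{\g}^\llambda\!\prmod$ is a grading shift), and \emph{that} is the mechanism by which Webster concludes Frobenius-ness, with the trace appearing only a posteriori. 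In Part~3, descending to the $q$-Shapovalov form on $K_0$ cannot by itself give non-degeneracy at the algebra level (a radical element could still be in the kernel of the form while vanishing in $K_0$); the non-degeneracy has to be established directly, e.g.\ by exhibiting a dual basis or by the projectivity-of-duals argument above. These are exactly the points where the real content of~\cite{W1} sits, and you correctly flagged the last one as the principal obstacle.
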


\smallskip

Projective modules over $R_{\g}^\lambda$ are defined the same way as for $R_{\g}$. 
Denote by $R^{\lambda}_{\g}\amod$ and by $R^{\lambda}_{\g}\prmod$ the categories of finitely generated, graded 
$R^{\lambda}_{\g}$-modules and finitely generated, graded projective $R^{\lambda}_{\g}$-modules
respectively.  
The module category structure in the $R^{\lambda}_{\g}\amod$ was studied in much more detail in \emph{e.g.}~\cite{KK} 
or~\cite{W1}. 
Here we describe the necessary to proceed through this paper.  
Let $\imath_i\colon R_{\g}^{\lambda}(\nu)\to R_{\g}^{\lambda}(\nu+\alpha_i)$ be the map obtained by adding 
a vertical strand labeled $i$ on the right of a diagram from $R^{\lambda}_{\g}$. 
The categorical $\g$-action on $R_{\g}^{\lambda}$ is obtained by 
the pair of biadjoint exact functors defined by 
\begin{equation}\label{eq:Uaction}
\begin{split}
F_i^{\lambda}   & 
\colon\mspace{22mu} R^{\lambda}_{\g}(\nu)\amod\mspace{16mu} 
\xra{\mspace{63mu}\Ind_i\mspace{63mu}}  R^{\lambda}_{\g}(\nu+\alpha_i)\amod 
\\
E_i^{\lambda}  
&\colon R^{\lambda}_{\g}(\nu+\alpha_i)\amod \xra{\ \Res_i\{\alpha_i^\vee(\lambda-\nu)-1\}\ }  
\mspace{20mu}R^{\lambda}_{\g} (\nu)\amod  
\end{split}
\end{equation}

The following is a particular case of a more general result proved by Kang and Kashiwara in~\cite{KK} 
and independently by Webster in~\cite{W1} . 
\begin{thm}[Kang-Kashiwara, Webster]\label{thm:KKW}
There is an isomorphism of $\g$-representations
\begin{equation*}
V_{\lambda}^{\g} \cong K_0\bigl( R_{\g}^{\lambda}\bigr) .
\end{equation*}
Moreover 
\begin{equation*}
\gdim\Hom_{R^\lambda_{\g}\amod}(P,P') = \langle [P],[P'] \rangle ,
\end{equation*}
where $\langle~,~\rangle$ is the $q$-Shapovalov form.  
\end{thm}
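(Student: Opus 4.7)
The plan is to build a $U_q(\g)$-action on $K_0\bigl(R_\g^\lambda\bigr)$ using the functors $F_i^\lambda, E_i^\lambda$ of~\eqref{eq:Uaction}, exhibit a natural surjection $V_\lambda^\g \twoheadrightarrow K_0\bigl(R_\g^\lambda\bigr)$ out of the highest weight vector $[1_\varnothing R_\g^\lambda]$, prove it is an isomorphism, and then derive the Shapovalov identity from biadjunction.

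First I would decompose $R_\g^\lambda = \bigoplus_\nu R_\g^\lambda(\nu)$ by the content of the strand labels and let $K_\eta$ act on $K_0\bigl(R_\g^\lambda(\nu)\bigr)$ by the scalar corresponding to the weight $\lambda-\nu$. The operators $[F_i^\lambda]$ and $[E_i^\lambda]$ from~\eqref{eq:Uaction} descend to Grothendieck groups since both functors are exact --- $\Ind$ as a left adjoint, and $\Res$ because $R_\g^\lambda$ is Frobenius by Lemma~\ref{lem:frobKLR}. The quantum Serre relations transfer from the uncyclotomized Khovanov-Lauda/Rouquier theorem recalled above, as they are witnessed by functor isomorphisms built from~\eqref{eq:R2}--\eqref{eq:R3} that descend to the cyclotomic quotient. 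The crucial point is the commutation relation $[E_i^\lambda, F_j^\lambda] = \delta_{ij}(K_{d_i\alpha_i}- K_{-d_i\alpha_i})/(q_i - q_i^{-1})$, which I would verify via the standard Mackey-type decomposition of $\Res_i \circ \Ind_j$ into $\Ind_j \circ \Res_i$ plus an ``identity'' correction: the grading shift $\{\alpha_i^\vee(\lambda-\nu)-1\}$ built into $E_i^\lambda$ is arranged precisely so that, combined with the dot-counting on the leftmost strand controlled by the cyclotomic relation $x_1^{\llambda_i}1_{i\und{i}}=0$, the net contribution is the quantum integer $[\alpha_i^\vee(\lambda-\nu)]_i$, matching the right-hand side on the nose.

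With this structure in place, $[1_\varnothing R_\g^\lambda]$ is nonzero, generates $K_0$ under the $F_i^\lambda$ action, and is annihilated by every $E_i^\lambda$ (since $R_\g^\lambda(-\alpha_i)=0$), so the universal property of $V_\lambda^\g$ supplies a surjection $V_\lambda^\g \twoheadrightarrow K_0\bigl(R_\g^\lambda\bigr)$. The main obstacle is proving injectivity, as a priori the cyclotomic ideal might collapse too much. My approach is to invoke the strong categorical $\mathfrak{sl}_2$ machinery of Chuang-Rouquier applied to each pair $(E_i^\lambda, F_i^\lambda)$: together with the cyclotomic bound this forces integrability of $K_0\bigl(R_\g^\lambda\bigr)$, and an induction on weight height then matches the graded characters on both sides, upgrading the surjection to an isomorphism.

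Finally, for the Shapovalov identity, the Frobenius structure (Lemma~\ref{lem:frobKLR}) yields a nondegenerate graded bilinear pairing on $K_0$ via $\gdim \Hom$. The biadjointness of $(F_i^\lambda, E_i^\lambda)$ with the precise shift $\{\alpha_i^\vee(\lambda-\nu)-1\}$ in~\eqref{eq:Uaction} translates into the graded adjunction $\gdim\Hom(F_i^\lambda P, P') = \gdim\Hom(P, q_i^{-1} K_{d_i\alpha_i} E_i^\lambda P')$, which is exactly the defining property $\langle F_i v, v'\rangle = \langle v, \phi(F_i) v'\rangle$ once one recalls that $\phi(F_i) = q_i^{-1} K_{d_i\alpha_i} E_i$. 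Combined with the normalization $\gdim\Hom(1_\varnothing R_\g^\lambda, 1_\varnothing R_\g^\lambda) = 1$, the uniqueness clause in the definition of the $q$-Shapovalov form gives the stated equality.
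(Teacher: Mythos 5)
The paper does not prove Theorem~\ref{thm:KKW}: it is quoted as a particular case of results of Kang--Kashiwara~\cite{KK} and Webster~\cite{W1}, so there is no in-paper argument to compare against (Subsection~\ref{ssec:easyBK} gives an independent branching-rule proof of the isomorphism $K_0(R_{\g}^\lambda)\cong V_\lambda^{\g}$, resting on the arguments of~\cite{vaz2}, but it does not follow your route and does not address the Shapovalov statement). Judged on its own terms your outline has roughly the right shape, but it buries the genuinely hard step in an incorrect justification. You claim that $F_i^\lambda$ and $E_i^\lambda$ descend to $K_0$ because ``both functors are exact --- $\Ind$ as a left adjoint, and $\Res$ because $R_\g^\lambda$ is Frobenius.'' Neither reason is valid: a left adjoint is only right exact in general, and restriction along any ring map is exact with no Frobenius input whatsoever. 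The exactness of $\Ind_i$ in the cyclotomic quotient is precisely the core technical theorem of~\cite{KK}: one must show that $R_\g^\lambda(\nu+\alpha_i)$ is projective (indeed free) as a one-sided $R_\g^\lambda(\nu)$-module. That freeness is also the structural input behind the Mackey-type bimodule decomposition and the dot-counting you invoke to get $[E_i^\lambda,F_j^\lambda]$; as written, that step restates what must be proved rather than proving it.

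A smaller logical wrinkle: a highest weight vector of weight $\lambda$ killed by all $E_i$ gives a map out of the Verma module, not directly out of $V_\lambda^\g$; the map only factors through the irreducible quotient after integrability is established, which in your plan is the Chuang--Rouquier step you place afterwards, so the order of that portion needs to be reversed. The Shapovalov discussion is essentially right once the biadjunction with its precise grading shift is granted, and matches how~\cite{KK,W1} deduce that part.
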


This isomorphism sends the isomorphism class of an indecomposable ${}_{j_1\dotsm j_r}P^\lambda$ to the 
 weight vector $F_{j_r}\dotsm F_{j_1}v_{\lambda}$. 
\smallskip

%The following surjection of cyclotomic KLR algebras was used explicitly for the first 
%time in~\cite{W2}. 
For two cyclotomic KLR algebras $R^\lambda$ and $R^{\lambda'}$ 
with $\lambda$ and $\lambda'$ highest weights we have the obvious surjection 
\begin{equation*}
\varphi_{\lambda'}^{\lambda}\colon R_{\g}^{\lambda}\to R_{\g}^{\lambda'}
\end{equation*}
which is compatible from the projections $R_{\g}\to R_{\g}^{\lambda}$ and $R_{\g}\to R_{\g}^{\lambda'}$. 
We have the functors
\begin{equation*}
\ext_{\lambda'}^\lambda: R_{\g}^{\lambda}\amod\to R_{\g}^{\lambda'}
\mspace{45mu}\text{and}\mspace{45mu}
\res_{\lambda'}^\lambda: R_{\g}^{\lambda'}\amod\to R_{\g}^{\lambda}
\end{equation*} 
of extension of scalars and restriction of scalars by $\varphi_{\lambda'}^\lambda$. 
These functors do not commute with the categorical $\g$-action in~\eqref{eq:Uaction} 
but they define maps corresponding with the inclusion/projection between 
the underlying vector spaces $K_0(R_{\g}^{\lambda})\rightleftarrows K_0(R_{\g}^{\lambda'})$. 

Since $\varphi_{\lambda'}^\lambda$ is compatible with the map $\imath_i$ of adding a strand labeled 
$i$ on the right 
we have isomorphisms $\ext_{\lambda'}^\lambda F_i^\lambda\cong F_i^{\lambda'}\ext_{\lambda'}^{\lambda}$ 
and $E_i^{\lambda}\res_{\lambda'}^{\lambda}\cong \res_{\lambda'}^\lambda E_i^{\lambda'}$.  
Note that $\ext_{\lambda'}^\lambda E_i^\lambda\ncong E_i^{\lambda'}\ext_{\lambda'}^{\lambda}$ 
and $F_i^{\lambda}\res_{\lambda'}^{\lambda}\ncong \res_{\lambda'}^\lambda F_i^{\lambda'}$. 
 
\begin{lem}\label{lem:cycproj}
In the case of 
$\lambda' = (\lambda_1-c,\lambda_2,\dotsc ,\lambda_n)$  
we have isomorphisms 
$\ext_{\lambda'}^\lambda E_i^\lambda\cong E_i^{\lambda'}\ext_{\lambda'}^{\lambda}$ 
and $F_i^{\lambda}\res_{\lambda'}^{\lambda}\cong \res_{\lambda'}^\lambda F_i^{\lambda'}$ 
for $i\neq 1$, and the maps $K_0(\ext_{\lambda'}^\lambda)$ and $K_0(\res_{\lambda'}^\lambda)$ 
are maps of $\g_{n-1}$-representations. 
\end{lem}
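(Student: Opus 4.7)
The plan is to reduce both functorial isomorphisms to a single diagrammatic claim about the kernel $\tilde J=\ker(\varphi_{\lambda'}^\lambda)\subset R_\g^\lambda$. Because $\lambda$ and $\lambda'$ agree in coordinates $\geq 2$, in all three types the Dynkin labels satisfy $\widebar\lambda_j=\widebar{\lambda'}_j$ for $j\geq 2$, so $\tilde J$ is generated as a two-sided ideal of $R_\g^\lambda$ by the elements $x_{1,\und{i}}^{\widebar\lambda_1-c}$ for sequences $\und{i}$ starting with the label $1$. Moreover $\alpha_i^\vee(\lambda)=\alpha_i^\vee(\lambda')$ for $i\neq 1$, so the grading shifts appearing in the definitions of $E_i^\lambda$ and $E_i^{\lambda'}$ coincide and no shift mismatch interferes with the comparison.

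Unwinding the functors at the level of tensor products, for a right $R_\g^\lambda(\nu+\alpha_i)$-module $M$ the natural map from $\ext_{\lambda'}^\lambda E_i^\lambda M\cong M/M\imath_i(\tilde J(\nu))$ to $E_i^{\lambda'}\ext_{\lambda'}^\lambda M\cong M/M\tilde J(\nu+\alpha_i)$ is an isomorphism as soon as the inclusion $\imath_i(\tilde J(\nu))\subseteq \tilde J(\nu+\alpha_i)$ generates the latter as a one-sided ideal of $R_\g^\lambda(\nu+\alpha_i)$; the same reduction (via the identification $R_\g^{\lambda'}(\nu)\otimes_{R_\g^\lambda(\nu)}R_\g^\lambda(\nu+\alpha_i)=R_\g^\lambda(\nu+\alpha_i)/\imath_i(\tilde J(\nu))R_\g^\lambda(\nu+\alpha_i)$) applies to the statement $F_i^\lambda\res_{\lambda'}^\lambda\cong \res_{\lambda'}^\lambda F_i^{\lambda'}$. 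So both isomorphisms follow from the key claim: for $i\neq 1$ and every generator $x_{1,\und{k}}^{\widebar\lambda_1-c}$ of $\tilde J(\nu+\alpha_i)$, with $\und{k}=(1,k_2,\dotsc,k_r)$ of type $\nu+\alpha_i$, this generator can be expressed as a one-sided product involving $\imath_i$ applied to a generator of $\tilde J(\nu)$. I would prove this by locating a position $p\in\{2,\dotsc,r\}$ with $k_p=i$: if $p=r$ the generator already lies in $\imath_i(\tilde J(\nu))$, while for $p<r$ I would transport the $i$-labeled strand rightward by successive crossings and invoke the KLR relations~\eqref{eq:R2}--\eqref{eq:R3} to handle corrections. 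Since $i\neq 1$, every correction term either adds dots to the leftmost strand (labeled $1$), keeping it in $\tilde J$, or produces a diagram of strictly smaller crossing/distance complexity that induction absorbs.

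The Grothendieck group statement is then formal: $\g_{n-1}$ is generated by $E_i,F_i$ for $i\in\{2,\dotsc,n\}$, and combining the isomorphisms above with the general compatibilities $\ext_{\lambda'}^\lambda F_i^\lambda\cong F_i^{\lambda'}\ext_{\lambda'}^\lambda$ and $E_i^\lambda\res_{\lambda'}^\lambda\cong\res_{\lambda'}^\lambda E_i^{\lambda'}$ recalled just before the lemma, the maps $K_0(\ext_{\lambda'}^\lambda)$ and $K_0(\res_{\lambda'}^\lambda)$ intertwine the entire $\g_{n-1}$-action. The main obstacle is the inductive diagrammatic bookkeeping when $i=2$ is joined to $1$ by a double edge (types $B_n$ and $C_n$): the relevant case of~\eqref{eq:R2} contributes two correction terms with distinct dot placements, and one must check carefully that both feed back into the induction. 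This is the direct analogue of the argument for type $A_n$ in~\cite{vaz2}.
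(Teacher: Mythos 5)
The paper states Lemma~\ref{lem:cycproj} without proof, so your blind attempt cannot be compared against an argument in the text; it must be judged on its own. Your overall strategy (reduce the functorial isomorphisms to a single statement about how the kernel ideal $\tilde J=\ker(\varphi_{\lambda'}^\lambda)$ interacts with the map $\imath_i$ of adding a strand on the right, then prove that statement diagrammatically) is a sensible one and is in the spirit of how such compatibilities are established for cyclotomic KLR algebras. Your identification of $\tilde J$ as the two-sided ideal of $R_\g^\lambda$ generated by the $x_{1,\und j}^{\widebar\lambda_1-c}$ with $\und j$ starting in $1$, and your observation that the grading shifts in $E_i^\lambda$ and $E_i^{\lambda'}$ agree for $i\neq 1$ because $\alpha_i^\vee(\lambda)=\alpha_i^\vee(\lambda')$, are both correct. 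The final step (deducing $\g_{n-1}$-equivariance of $K_0(\ext_{\lambda'}^\lambda)$ and $K_0(\res_{\lambda'}^\lambda)$ from the four functorial isomorphisms) is also fine.

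However, there is a genuine gap in the reduction step, and it invalidates the precise claim you say you would prove. For a right $R_\g^\lambda(\nu+\alpha_i)$-module $M$, the functor $E_i^\lambda=\Res_i$ produces $Me_\nu$ (with the $R_\g^\lambda(\nu)$-action via $\imath_i$), where $e_\nu=\imath_i(1_{R_\g^\lambda(\nu)})$ is the idempotent picking out sequences ending in $i$; it does not produce $M$ itself. Your displayed identification $\ext_{\lambda'}^\lambda E_i^\lambda M\cong M/M\imath_i(\tilde J(\nu))$ silently drops $e_\nu$. Once it is restored, what actually has to be proved is the equality $M e_\nu\cap M\tilde J(\nu+\alpha_i)=M\imath_i(\tilde J(\nu))$, equivalently (on the free module, whence for all modules by right-exactness) $\tilde J(\nu+\alpha_i)\,e_\nu=R_\g^\lambda(\nu+\alpha_i)\,\imath_i(\tilde J(\nu))$. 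Your claim that $\imath_i(\tilde J(\nu))$ generates the whole of $\tilde J(\nu+\alpha_i)$ as a one-sided ideal is \emph{false}, whether one reads one-sided as left or as right: $\tilde J(\nu+\alpha_i)$ contains $x_{1,\und j}^{\widebar\lambda_1-c}$ for sequences $\und j$ not ending in $i$, and any one-sided multiple of an element of $\imath_i(\tilde J(\nu))$ is supported on the idempotent $e_\nu$, hence is annihilated on the appropriate side by $1_{\und j}$ for such $\und j$. The corrected claim $\tilde J(\nu+\alpha_i)e_\nu=R_\g^\lambda(\nu+\alpha_i)\imath_i(\tilde J(\nu))$ is what you actually need, and proving it is not just a matter of transporting a single $i$-strand to the rightmost position: you must handle $x_{1,\und j}^{\widebar\lambda_1-c}\cdot b$ for \emph{arbitrary} $b\in 1_{\und j}R_\g^\lambda(\nu+\alpha_i)e_\nu$, including diagrams in which the strand starting at position $1$ crosses other strands labeled $1$, producing dot-slide corrections via~\eqref{eq:dotslides}. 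Moreover one cannot recover $x_{1,\und j}^{\widebar\lambda_1-c}$ from $x_{1,\und j}^{\widebar\lambda_1-c}\tau$ by inverting the crossing $\tau$, since crossings are not invertible in $R_\g^\lambda$. A complete proof would need an inductive argument on the number of strands (or an appeal to the Kang--Kashiwara freeness results for $e_\nu R_\g^\lambda(\nu+\alpha_i)$ over $R_\g^\lambda(\nu)$) in place of the informal strand-transport sketch; as written, the reduction and the diagrammatic step each contain a gap.
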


%%%%%%%%%%%%%%%%%%%%%%%%%%%%%%%%%%%%%%%%
%%%                                  %%%
%%%    Categorical BRules            %%%
%%%                                  %%%
%%%%%%%%%%%%%%%%%%%%%%%%%%%%%%%%%%%%%%%%
\section{Categorical branching rules}\label{sec:catBR}

In this section we obtain the categorical version of the branching rule. 
The approach follows closely the one in~\cite{vaz2} for the case of type $A_n$. 
From now on $\g_n$ denotes $\soo$, $\spp$ or $\so$. 
The first result is the following. 
\begin{thm}\label{thm:cycproj}
We have a surjection of cyclotomic KLR algebras 
\begin{equation*}%\label{eq:cycinc}
\pi^\lambda\colon R^\lambda_{\g_n} \to%\xra{\ \ \pi^\lambda \ \ }  
\bigoplus\limits_{(\mu,\nu)\in\tau_{\g_n}(\lambda)}R_{\g_{n-1}}^{\xi_{(\mu,\nu)}(\lambda)} .
\end{equation*} 
\end{thm}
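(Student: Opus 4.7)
The plan is to adapt the strategy of~\cite{vaz2} for type $A$, where the analogous surjection was constructed by classifying diagrams according to the positions of the strands labeled by the new node. The novel feature in the present setting is the two-step combinatorial structure of the admissible maps in Definitions~\ref{def:admB},~\ref{def:admD} and~\ref{def:admC}: they consist of a sequence of lowerings followed by a sequence of raisings (or vice-versa in type $D$), reflecting the fact that in classical types the branching rule passes through an intermediate parameter $\nu$.

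First I would use the decomposition $R^\lambda_{\g_n}=\bigoplus_{k\geq 0} R^\lambda_{\g_n}(k\alpha_1)$ induced by~\eqref{eq:decKLR1} and, within each summand, classify the idempotents $1_{\und{i}}$ by the pattern of positions of the strands labeled $1$ relative to the strands labeled $2,\dotsc ,n$. Since the edge between nodes $1$ and $2$ is simple in all three types, the reduction of diagrams to canonical form using~\eqref{eq:R2}-\eqref{eq:R3} proceeds as in~\cite{vaz2}: each idempotent normalizes to one of a set of standard idempotents labeled by $\lambda$-admissible maps $\xi_{-\und{t}^\ell,+\und{i}^m}$. These standard idempotents play the role of orthogonal projectors cutting $R^\lambda_{\g_n}$ into the blocks indexed by $\tau_{\g_n}(\lambda)$, with the additional refinement by the parameter $c$ of Definition~\ref{def:admC} in type $C$.

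The map $\pi^\lambda$ is then defined block by block: on a standard idempotent one erases the strands labeled $1$, transferring their dots and crossings into the $\g_{n-1}$-part via~\eqref{eq:R2}-\eqref{eq:R3} and the dot-slide relation~\eqref{eq:dotslides}. The result lives in $R_{\g_{n-1}}^{\xi_{(\mu,\nu)}(\lambda)}$, the Dynkin label at node $2$ having been shifted by $\xi_{(\mu,\nu)}$ precisely to accommodate the dots inherited from the erased $1$-strands.

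The main obstacle is checking that $\pi^\lambda$ is well-defined, i.e.\ that all relations of $R^\lambda_{\g_n}$, and in particular the cyclotomic relation $x_{1,\und{i}}^{\llambda_{i_1}}=0$, map to $0$ in the target. The required bookkeeping is parallel to that in~\cite{vaz2} but must be carried out for each admissible pattern $(\und{t}^\ell,\und{i}^m)$, and in type $C$ for each allowable value of $c$; the shift prescribed by $\xi_{(\mu,\nu)}$ is exactly what is needed for the cyclotomic relation at node $2$ of $R_{\g_{n-1}}^{\xi_{(\mu,\nu)}(\lambda)}$ to absorb the image of $x_{1,\und{i}}^{\llambda_{i_1}}$ after the strands labeled $1$ are erased. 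Surjectivity is then immediate, since any generating diagram in a summand $R_{\g_{n-1}}^{\xi_{(\mu,\nu)}(\lambda)}$ lifts to one in $R^\lambda_{\g_n}$ by adjoining the prescribed configuration of $1$-labeled strands.
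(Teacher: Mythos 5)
Your overall strategy matches the paper's: both decompose $R^\lambda_{\g_n}$ by the number of $1$-labeled strands, isolate special idempotents attached to $\lambda$-admissible maps $\xi_{-\und{t}^\ell,+\und{i}^m}$, and define the surjection by stripping off the block carrying those strands and tracking how dots migrate under the relations~\eqref{eq:R2}--\eqref{eq:R3} and~\eqref{eq:dotslides}. However, two points in your description would derail the actual verification. First, the map does not merely ``erase the strands labeled $1$'': the special idempotent $p_{\pm i}$ is an entire chain such as $p_{i,i+1,\dotsc,n,n,n-1,\dotsc,2,1}$ (type $B$) or $p_{i,i+1,\dotsc,n-2,n-1,n,n-2,\dotsc,2,1}$ (type $D$), containing just one $1$-strand together with a large number of strands labeled $2,\dotsc,n$. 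The surjection $\pi_{-\und{t}^\ell,+\und{i}^m}$ sends $p_{-\und{t}^\ell,+\und{i}^m}X\mapsto X$, i.e.\ it deletes the \emph{whole} block. Deleting only the $1$-strands leaves behind strands that have no counterpart in $R_{\g_{n-1}}^{\xi_{(\mu,\nu)}(\lambda)}$, so the map would not land in the claimed target.

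Second, the cyclotomic bookkeeping is not localized at node $2$ and is not reducible to the type-$A$ case just because the edge $1$--$2$ is simply laced. The dot-slide computation in the paper ($X_{\pm i}(j,r_j)=Z_{\pm i}(j,\widetilde{r}_j)+\text{terms in }\widetilde{B}^\bot$) produces shifts at the nodes $j\in\{i-1,i\}$ (respectively $\{i,i+1\}$), and crucially a shift of $\pm 2$ at the node $n$ in types $B_n$ and $C_n$, coming precisely from the double bond at $n{-}1$--$n$ that the block $p_{\pm i}$ traverses; type $D_n$ similarly behaves specially around the trivalent vertex. This is the content of the displayed case analyses for $X_{-n}$ and $X_{+(n-1)}$ in the proof. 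If one only adjusts the Dynkin label at node $2$, the resulting quotient would not match $\overline{\xi_{(\mu,\nu)}(\lambda)}$ and the cyclotomic ideal of $R^\lambda_{\g_n}$ would fail to map to zero. The correct statement is that $\overline{\lambda}_j$ is replaced by $\overline{\xi_{\pm i}(\lambda)}_j$ for \emph{every} node $j$, and the non-simply-laced data of $\g_n$ enters these computations in an essential way.
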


We stress 
using a $\xi(\lambda)$ with $\xi$ in $\cD_{\g_n,\lambda}$ is the same as using 
a $\xi_{(\mu,\nu)}$ with $(\mu,\nu)$ in $\tau_{\g_n}(\lambda)$ 
(see Definitions~\ref{def:admB},~\ref{def:admD} and~\ref{def:admC} and the remarks following them). 
We will use this fact implicitly in the sequel.

Using the map above we can define the functors of interest to us in this paper.
\begin{defn}
Let
\begin{align*}
\Pi^\lambda= \ext^\lambda
\colon
R^\lambda_{\g_n} \amod
&\to\   
\bigoplus\limits_{(\mu,\nu)\in\tau_{\g_n}(\lambda)}R^{\xi_{(\mu,\nu)}(\lambda)}_{\g_{n-1}}\amod 
\\[1ex]
 M &\mapsto M \otimes_{R^\lambda_{\g_n}}
\Bigl(\oplus_{ (\mu,\nu)\in\tau_{\g_n}(\lambda) } R^{\xi_{(\mu,\nu)}(\lambda)}_{\g_{n-1}}\Bigr)
\end{align*}
and $\res^\lambda\colon\bigl(\oplus_{ (\mu,\nu)\in\tau_{\g_n}(\lambda) }R^{\xi_{(\mu,\nu)}(\lambda)}_{\g_{n-1}}\bigr)\amod 
\to 
R^\lambda_{B_n}(k\alpha_1)\amod$ 
be the functors of extension of scalars and restriction of scalars 
by the map $\pi^\lambda$ from Theorem~\ref{thm:cycproj} respectively. 
\end{defn}

\begin{prop}\label{prop:intertwiner}
The functors $\Pi^\lambda$ and $\res^\lambda$ are biadjoint. 
The functor $\Pi^\lambda$ is full, essentially surjective and intertwines the categorical $\g_{n-1}$-action.
\end{prop}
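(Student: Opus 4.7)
The plan is to prove the four assertions one by one, exploiting that $\pi^\lambda$ is a surjection of Frobenius algebras and the same arguments used in \cite{vaz2} for type $A_n$ transpose to the present setting.

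First, for the biadjunction of $\Pi^\lambda$ and $\res^\lambda$, I would invoke general principles for surjections of Frobenius algebras. Since both $R^\lambda_{\g_n}$ and each $R^{\xi_{(\mu,\nu)}(\lambda)}_{\g_{n-1}}$ are Frobenius by Lemma~\ref{lem:frobKLR}, the usual adjunction $(\Pi^\lambda, \res^\lambda)$ coming from extension/restriction of scalars along $\pi^\lambda$ automatically upgrades to a biadjoint pair: the right adjoint of $\res^\lambda$ is computed by $\Hom$, which for a surjection of Frobenius algebras agrees with tensor product up to a shift encoding the difference of the Nakayama forms. The relevant check is that $\pi^\lambda$ sends the Frobenius pairing on $R^\lambda_{\g_n}$ to a scalar multiple of the pairing on the target, which is the same verification carried out in \cite{vaz2}.

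Second, essential surjectivity of $\Pi^\lambda$ is immediate: any module over $\bigoplus_{(\mu,\nu)} R^{\xi_{(\mu,\nu)}(\lambda)}_{\g_{n-1}}$ becomes an $R^\lambda_{\g_n}$-module by pullback along $\pi^\lambda$, and tensoring that pullback back up recovers it. Fullness reduces to the statement that for any projective generators $P,P'$ of the target category, the map
\begin{equation*}
\Hom_{R^\lambda_{\g_n}}(\res^\lambda P, \res^\lambda P') \longrightarrow \Hom(P,P')
\end{equation*}
induced by $\Pi^\lambda$ is surjective; since both $\Hom$-spaces are realized by idempotent-truncated cyclotomic algebras (via $\Hom = e' R e$), fullness follows directly from the surjectivity of $\pi^\lambda$ established in Theorem~\ref{thm:cycproj}.

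Third, to show $\Pi^\lambda$ intertwines the categorical $\g_{n-1}$-action, the content is that for each $i \in \{2, \dotsc, n\}$ one has natural isomorphisms
\begin{equation*}
\Pi^\lambda \circ F_i^\lambda \;\cong\; \Bigl(\bigoplus_{(\mu,\nu)} F_i^{\xi_{(\mu,\nu)}(\lambda)}\Bigr)\circ \Pi^\lambda,
\end{equation*}
and similarly for $E_i^\lambda$. The $F$-side is formal: adding a strand labeled $i$ on the right is implemented by the same map $\imath_i$ on both sides, and $\pi^\lambda$ commutes with $\imath_i$ for $i \neq 1$ by construction (this is the analog of the compatibility already used in Lemma~\ref{lem:cycproj}). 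The $E$-side then follows from biadjointness together with the grading shifts built into~\eqref{eq:Uaction}: since the weight spaces on the two sides of $\Pi^\lambda$ correspond under the branching rule~\eqref{eq:brulsalgg}, the shifts $\{\alpha_i^\vee(\lambda - \nu) - 1\}$ match up for $i \neq 1$.

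The main obstacle will be the intertwining statement for $E_i$, because the grading shifts on the two sides are defined relative to different highest weights and one must verify they agree under the branching decomposition; in type $A_n$ this was handled in \cite{vaz2} through an explicit weight-by-weight matching, and the corresponding combinatorics here must be done separately for types $B_n$, $C_n$, $D_n$ using the explicit form of $\xi_{(\mu,\nu)}$ given in Section~\ref{sec:basics}. Everything else reduces to either the Frobenius structure or to straightforward diagrammatic compatibilities of $\pi^\lambda$ with the generators of $R_{\g_{n-1}}$.
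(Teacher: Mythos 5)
Your overall approach — biadjunction from a Frobenius structure, fullness from surjectivity of $\pi^\lambda$ on idempotent truncations, intertwining from compatibility of $\pi^\lambda$ with $\imath_i$ for $i\neq 1$, all deferring the details to the type $A_n$ argument in \cite{vaz2} — is the same approach the paper takes. The paper organizes this by fixing the number $k$ of $\alpha_1$-strands, stating the $k$-graded version (Lemmas~\ref{lem:BR-biadj-BRFull-branch-k-B}, \ref{lem:BR-biadj-BRFull-branch-k-D}, \ref{lem:BR-biadj-BRFull-branch-k-C}) with a reference to \cite{vaz2}, and summing over $k$. That bookkeeping is inessential, and your sketch of the $F$/$E$ asymmetry and the grading-shift matching is the right thing to be worried about (it is precisely the content the paper records separately in Lemma~\ref{lem:cycproj}).

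However, you miss the one place where the paper cannot ``just transpose the $A_n$ argument,'' namely type $C_n$. There, because no positive root satisfies $\lambda-\delta=(\lambda_1-1,\lambda_2,\dotsc,\lambda_n)$, the functor $\Pi^\lambda$ is \emph{not} plain extension of scalars along a single surjection: it is the composite $\bigoplus_{c}\Pi^{\lambda-c}\circ\ext_{\lambda-c}^\lambda$, where $\ext_{\lambda-c}^\lambda$ is extension along the change-of-cyclotomic-ideal map $\varphi_{\lambda-c}^\lambda$, and each $\Pi^{\lambda-c}$ is extension along $\pi_k$ on $R^{\lambda-c}_{C_n}$. Your argument treats $\Pi^\lambda$ as a single ext/res pair along $\pi^\lambda$, which is correct for $B_n$, $D_n$, but the $C_n$ case requires both Lemma~\ref{lem:BR-biadj-BRFull-branch-k-C} and Lemma~\ref{lem:cycproj}, and biadjointness/intertwining have to be checked for the composite. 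You also state the biadjunction claim too strongly: for a surjection $A\twoheadrightarrow B$ the pair $(\ext,\res)$ is not biadjoint merely because $A$ and $B$ are each Frobenius — one needs a compatibility of the two Frobenius forms (your ``relevant check''), which is exactly the nontrivial verification imported from \cite{vaz2}; phrasing it as an automatic upgrade understates where the work is.
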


The main result of this section is the folowing.
\begin{thm}\label{thm:branchrules}
The functor $\Pi^{\lambda}$ descends to 
an isomorphism of $\g_{n-1}$-representations
\begin{align*} 
K_0(\Pi^{\lambda} )
\colon V^{\g_n}_{\lambda} \cong K_0 ( R^\lambda_{\g_n} )
\xra{\ \ \cong\ \ }
K_0\biggl(\ \bigoplus\limits_{ (\mu,\nu)\in\tau_{\g_n}(\lambda) }R^{\xi_{(\mu,\nu)}(\lambda)}_{\g_{n-1}}\biggr)
\cong\bigoplus\limits_{\mu}c(\mu)V_{\mu}^{\mathfrak{so}_{2n-1}} ,
\end{align*}  
with $c(\mu)$ the number of $\nu=(\nu_2, \dotsc , \nu_n)$ such that the pair $(\nu,\mu)$ is in  $\tau_{B_n}(\lambda)$.  
\end{thm}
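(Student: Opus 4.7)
My approach is to combine the $\g_{n-1}$-equivariance of $K_0(\Pi^\lambda)$ with the algebraic branching rule~\eqref{eq:brulsalgg} to reduce the statement to a surjectivity claim, and then establish surjectivity using the explicit form of $\pi^\lambda$.

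First, since $\Pi^\lambda=\ext^\lambda$ is extension of scalars along the surjection $\pi^\lambda$ of Theorem~\ref{thm:cycproj}, it sends finitely generated projectives to finitely generated projectives, so the induced map $K_0(\Pi^\lambda)$ is well-defined. By Theorem~\ref{thm:KKW}, the source is $K_0(R^\lambda_{\g_n})\cong V^{\g_n}_\lambda$ and the target splits as
\[
K_0\Bigl(\,\bigoplus_{(\mu,\nu)\in\tau_{\g_n}(\lambda)}R^{\xi_{(\mu,\nu)}(\lambda)}_{\g_{n-1}}\Bigr)\ \cong\ \bigoplus_{(\mu,\nu)\in\tau_{\g_n}(\lambda)}V^{\g_{n-1}}_\mu.
\]
By Proposition~\ref{prop:intertwiner}, $K_0(\Pi^\lambda)$ is a morphism of $\g_{n-1}$-representations.

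Next, the algebraic branching rule~\eqref{eq:brulsalgg} identifies the source and target as abstractly isomorphic $\g_{n-1}$-modules, both equal to $\bigoplus_\mu c(\mu)V^{\g_{n-1}}_\mu$. In particular they share the same (finite) graded dimension, and it therefore suffices to show that $K_0(\Pi^\lambda)$ is surjective. For this I exploit the fact that for any idempotent $e\in R^\lambda_{\g_n}$ one has
\[
\Pi^\lambda(R^\lambda_{\g_n}e) \cong \Bigl(\,\bigoplus_{(\mu,\nu)}R^{\xi_{(\mu,\nu)}(\lambda)}_{\g_{n-1}}\Bigr)\pi^\lambda(e),
\]
so that the image of $K_0(\Pi^\lambda)$ is generated by the classes of the projectives $\bigoplus_{(\mu,\nu)}R^{\xi_{(\mu,\nu)}(\lambda)}_{\g_{n-1}}\pi^\lambda(1_{\und{i}})$ as $\und{i}$ ranges over sequences of simple roots of $\g_n$.

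The main obstacle will be showing that these generators span the \emph{full} direct sum on the right: a single source projective is sent to a module with one factor in each summand, so one must ``separate'' the contributions across the various $(\mu,\nu)$ in order to realize every indecomposable projective of every summand independently. For sequences $\und{i}$ involving the extra label $1$, the cyclotomic relations collapse differently in the different summands (reflecting the distinct values of $\widebar{\lambda}$ at node $1$ attached to the different $\xi_{(\mu,\nu)}(\lambda)$), so that $\pi^\lambda(1_{\und{i}})$ produces linearly independent idempotents across the summands. Combining this with the fullness of $\Pi^\lambda$ from Proposition~\ref{prop:intertwiner}, and with the compatibility of $\ext^\lambda_{\lambda'}$ with the $\g_{n-1}$-action for $i\neq 1$ (Lemma~\ref{lem:cycproj} in the type $C$ case, with direct analogues in types $B$ and $D$), one splits $K_0(\Pi^\lambda)$ $\g_{n-1}$-equivariantly across the summands and concludes surjectivity, hence the claimed isomorphism. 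This is the categorical counterpart of the combinatorial argument carried out in~\cite{vaz2} for type $A_n$, and most of it transfers directly to the present setting.
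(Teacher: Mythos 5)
Your overall strategy --- invoke Theorem~\ref{thm:KKW} to identify the Grothendieck groups, use Proposition~\ref{prop:intertwiner} for $\g_{n-1}$-equivariance, use the algebraic branching rule~\eqref{eq:brulsalgg} to match dimensions, and then reduce to surjectivity --- is precisely the paper's approach; after establishing that each component $\Pi_{(\nu,\mu)}$ descends to a $\g_{n-1}$-equivariant surjection on Grothendieck groups, the paper simply says ``the main result of this section now follows by counting dimensions.''

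Where your argument is shakier is the surjectivity step. You correctly identify the point that needs care (a single source projective is sent to a module with a factor in every summand, so one must verify that the images generate each summand independently), but the argument you offer --- that ``the cyclotomic relations collapse differently in the different summands, so that $\pi^\lambda(1_{\und{i}})$ produces linearly independent idempotents across the summands'' --- does not close the gap. Linear independence across the direct summands of $K_0\bigl(\bigoplus_{(\mu,\nu)}R^{\xi_{(\mu,\nu)}(\lambda)}_{\g_{n-1}}\bigr)$ is automatic for any collection with nonzero projections, and says nothing about whether the images span within each summand. The clean way to get surjectivity is simpler and avoids your concern entirely: $\pi^\lambda$ is a surjection of finite-dimensional (indeed Frobenius, by Lemma~\ref{lem:frobKLR}) graded algebras, so homogeneous primitive idempotents of the target lift to idempotents of the source. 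Hence every indecomposable projective $Q=Bf$ over $B=\bigoplus_{(\mu,\nu)}R^{\xi_{(\mu,\nu)}(\lambda)}_{\g_{n-1}}$ is of the form $\Pi^\lambda(R^\lambda_{\g_n}\tilde f)$ for a lift $\tilde f$ of $f$, and $K_0(\Pi^\lambda)$ is surjective. Alternatively, this is exactly what ``$\Pi^\lambda$ is full and essentially surjective'' (Proposition~\ref{prop:intertwiner}, or the summand-by-summand version in the lemmas of type \ref{lem:BR-biadj-BRFull-branch-k-B}) already buys you, once you note that extension of scalars sends projectives to projectives and essential surjectivity restricted to projectives hits every indecomposable projective up to isomorphism. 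With surjectivity secured this way, the dimension count from~\eqref{eq:brulsalgg} completes the proof as in your and the paper's outline.
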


%Since every object in $R^\lambda_\g$ has a presentation by projectives we have 
%that the functor $\Pi^\lambda$ is injective on objects. 

Recall that a functor is called a \emph{wide equivalence of categories}  
if it is full and a bijection on objects.
By analogy with the case of type $A_n$ we make the following conjecture. 
\begin{conj}\label{cor:BRinj}
The functor $\Pi^{\lambda}$ is injective on objects and therefore a wide equivalence of categories.  
\end{conj}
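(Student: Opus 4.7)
The plan is to adapt the strategy for type $A_n$ from~\cite{vaz2} and reduce the statement to a claim about indecomposable projective modules, which can then be controlled via the Grothendieck group isomorphism of Theorem~\ref{thm:branchrules}. Since $R^\lambda_{\g_n}$ is Frobenius by Lemma~\ref{lem:frobKLR}, it is finite-dimensional, so the categories $R^\lambda_{\g_n}\amod$ and $\bigoplus_{(\mu,\nu)\in\tau_{\g_n}(\lambda)}R^{\xi_{(\mu,\nu)}(\lambda)}_{\g_{n-1}}\amod$ are Krull--Schmidt. Because $\Pi^\lambda$ is additive, injectivity on isomorphism classes of objects reduces to injectivity on isomorphism classes of indecomposables (modulo grading shifts).

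For an indecomposable projective $_eP^\lambda = eR^\lambda_{\g_n}$, note that $\Pi^\lambda(_eP^\lambda) = eR^\lambda_{\g_n}\otimes_{R^\lambda_{\g_n}}R' \cong eR'$ where $R' = \bigoplus R^{\xi_{(\mu,\nu)}(\lambda)}_{\g_{n-1}}$, and this is either an indecomposable projective or zero (a standard fact for extension of scalars along a ring surjection, since $\pi^\lambda$ preserves primitivity of idempotents or sends them to zero). I would then rule out the vanishing case using Theorem~\ref{thm:branchrules}: if $\Pi^\lambda(P) = 0$ for $P \neq 0$ indecomposable projective, then $[P]$ would lie in the kernel of $K_0(\Pi^\lambda)$, contradicting the fact that this map is an isomorphism and that classes of nonzero indecomposable projectives are nonzero (using Theorem~\ref{thm:KKW} and the basis statement it provides).

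Similarly, suppose $\Pi^\lambda(P) \cong \Pi^\lambda(P')\{k\}$ for two indecomposable projectives $P$, $P'$ of $R^\lambda_{\g_n}$ and some shift $k$. Then $K_0(\Pi^\lambda)([P]) = q^k K_0(\Pi^\lambda)([P'])$, so by injectivity of $K_0(\Pi^\lambda)$ we get $[P] = q^k[P']$ in $K_0(R^\lambda_{\g_n})$. Since indecomposable projectives over a Frobenius (hence semiperfect) graded algebra are determined up to shift by their classes in $K_0$, this forces $P \cong P'\{k\}$. Combining these three points with the fullness and essential surjectivity of $\Pi^\lambda$ from Proposition~\ref{prop:intertwiner} yields that $\Pi^\lambda$ restricted to projective subcategories is a wide equivalence.

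The hard part will be extending injectivity from projectives to arbitrary objects of $R^\lambda_{\g_n}\amod$. The functor is explicitly $\Pi^\lambda(M) = M/MI$ where $I = \ker(\pi^\lambda)$, and in principle different modules $M$, $M'$ could have $M/MI \cong M'/M'I$; one must show this never happens. Unlike the projective case, there is no direct Grothendieck group argument available, and this is presumably why the statement is left as a conjecture rather than a theorem. A plausible route is to combine the biadjointness of $(\Pi^\lambda,\res^\lambda)$ with a detailed analysis of the ideal $I$ via the diagrammatics of Section~\ref{sec:KLRalgebras}, showing that the unit $M \to \res^\lambda\Pi^\lambda M$ is injective for every $M$ (so $M$ embeds into an $R'$-module, forcing $MI=0$); this would reduce the problem to the essential image of $\res^\lambda$, where $\Pi^\lambda$ acts as the identity and injectivity is automatic.
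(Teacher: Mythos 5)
First, note that this statement is presented in the paper as a \emph{conjecture}, with no proof supplied; the author explicitly leaves it open even in the type $A_n$ predecessor. So there is no ``paper's own proof'' to compare against, and the right question is whether your argument closes the gap. It does not.

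Your reduction to indecomposable projectives via Krull--Schmidt and the claim that $\pi^\lambda$ takes primitive idempotents to primitive idempotents or zero (because $eR^\lambda_{\g_n}e$ is local and a quotient of a local ring is local or zero) are fine, and the Grothendieck-group argument ruling out the vanishing case and separating non-isomorphic indecomposable projectives is sound in a graded semiperfect setting. But as you yourself point out, that only treats the subcategory of projectives; the conjecture as stated concerns all of $R^\lambda_{\g_n}\amod$, and the functor $\Pi^\lambda(M)=M/MI$ with $I=\ker(\pi^\lambda)$ can in principle identify non-isomorphic modules or kill nonzero ones (any $K$ with $KI=K$ dies).

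The proposed remedy in your last paragraph does not work. The unit of the adjunction $\Pi^\lambda\dashv\res^\lambda$ is precisely the quotient map $M\to\res^\lambda\Pi^\lambda M\cong M/MI$, which is always surjective, never injective unless $MI=0$. Taking $M=R^\lambda_{\g_n}$ itself gives $MI=I\neq 0$ (since $\pi^\lambda$ is a proper surjection), so ``the unit is injective for every $M$'' is false on the nose, and the intended conclusion (that every module factors through $R'$) would collapse $R^\lambda_{\g_n}$ to $R'$. The unit of the other adjunction $\res^\lambda\dashv\Pi^\lambda$ lives on the $R'$-module side and is an isomorphism, so it gives no information either. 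What is actually needed is an argument that $I\subseteq J(R^\lambda_{\g_n})$ (so that $KI=K$ forces $K=0$ by Nakayama) together with a Fitting-type or socle-filtration argument comparing $M$ and $M'$ when $M/MI\cong M'/M'I$; neither is supplied, and neither is obviously available from the ingredients in this paper. This is precisely why the statement remains a conjecture.
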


For the sake of readability we separate the proofs of Theorems~\ref{thm:cycproj} and~\ref{thm:branchrules} 
and Proposition~\ref{prop:intertwiner} in the cases of $\soo$, $\soo$ and $\spp$, 
which we do in the next three subsections. 
These can be read independently from each other.

%%%%%%%%%%%%%%%%%%%%%%%%%%%%%%%%%%%%%%%%
%%%                                  %%%
%%%        Type B_n                  %%%
%%%                                  %%%
%%%%%%%%%%%%%%%%%%%%%%%%%%%%%%%%%%%%%%%%
\subsection{The case of quantum $\soo$}\label{ssec:catbranchingB}

We start by defining the special classes of idempotents in $R_{B_n}^\lambda$.
\begin{defn}
We define the idempotents 
$p_{\pm i}$ by
\begin{align*}
p_{-i} &= 
\begin{cases} 
p_{i, i+1, \dotsc , n-1, n, n, n-1, \dotsc, 2, 1}  
\\[0.5ex] 
p_{n, n-1, n-2, \dotsc, 2, 1}    
\end{cases} 
& \begin{matrix}  1 < i \leq n, \\[1.2ex] i=1, \end{matrix}
\\[1ex]
\ \ p_{+i} &= p_{i, i-1, i-2, \dotsc, 2, 1}    & 1 \leq i < n.
\end{align*}
\end{defn}
We denote the horizontal composition $p_{\pm j}p_{\pm k}$ by $p_{\pm j,\pm k}$. 
We allow ourselves another notational simplification and write 
$p_{\varepsilon j, \varepsilon k}=p_{\varepsilon jk}$, whenever $\varepsilon = \pm 1 $.

\begin{defn}
The idempotent 
$e'(p_{-\und{t}^\ell,+\und{i}^m},\und{j})\in R_{B_n}^\lambda(\nu+(\ell+m)\alpha_1)$  
is said to be a \emph{special idempotent} if 
$\xi_{-\und{t}^\ell,+\und{i}^m}$ is $\lambda$-admissible.  
We use the notation $e(p_{-\und{t}^\ell,\und{i}^m},\und{j})$ for special idempotents.  
\end{defn}

\medskip

In the following we give the maps between some cyclotomic KLR algebras that are necessary 
to obtain the categorical branching rule.  
\begin{lem}\label{lem:cycincB} 
For each $k \geq 0$ there is a surjection of algebras 
\begin{equation*}%\label{eq:cycincB}
\pi_{k}\colon R^\lambda_{B_n}(k\alpha_1) \to%\xra{\ \pi_{k}\ }  
\bigoplus\limits_{\xi_{-\und{t}^\ell,+\und{i}^m}\in\cD_\lambda^{k}}R_{B_{n-1}}^{\xi_{-\und{t}^\ell,+\und{i}^m}(\lambda)}.
\end{equation*} 
\end{lem}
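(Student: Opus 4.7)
My plan is to construct $\pi_k$ componentwise. For each $\lambda$-admissible $\xi=\xi_{-\und{t}^\ell,+\und{i}^m}\in\cD_{B_n,\lambda}^k$ I will exhibit a surjective algebra homomorphism $\pi_{k,\xi}\colon R_{B_n}^\lambda(k\alpha_1)\to R_{B_{n-1}}^{\xi(\lambda)}$ and then set $\pi_k=\bigoplus_\xi\pi_{k,\xi}$. The whole construction follows the pattern of the type $A_n$ analogue in~\cite{vaz2}, which is what the author means by saying such arguments can be transposed without difficulty.

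The special idempotents $e_\xi=e(p_{-\und{t}^\ell,+\und{i}^m},-)$ serve as templates: each elementary block $p_{\pm i}$ is a staircase in which a strand labeled $1$ is braided past a block of strands labeled $2,3,\dotsc$, and $e_\xi$ arranges all $k$ strands labeled $1$ in a fixed pattern on the left. The map $\pi_{k,\xi}$ is defined on generators by declaring that an idempotent $1_{\und{i}}$ whose positions of $1$'s match those fixed by $e_\xi$ maps to the corresponding idempotent in $R_{B_{n-1}}^{\xi(\lambda)}$ on the subsequence of non-$1$ labels (and to $0$ otherwise), while dots and crossings on non-$1$ strands pass through directly. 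Dots and crossings involving $1$-strands are reduced to linear combinations of diagrams of the previous type by repeatedly applying the relations~\eqref{eq:R2}--\eqref{eq:R3} and the dot-slide identity~\eqref{eq:dotslides} to push each $1$-strand as far left as possible.

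The main step, and the principal obstacle, is verifying that $\pi_{k,\xi}$ is a well-defined algebra homomorphism that kills the cyclotomic ideal $I^{\xi(\lambda)}$. This requires a careful accounting of how many dots accumulate on the new leftmost strand after the $k$ strands labeled $1$ have been absorbed into $e_\xi$. The inequalities $t_1\leq\dotsc\leq t_\ell$, $i_m\leq\dotsc\leq i_1$ and the inbetweenness built into Definition~\ref{def:admB} are precisely what is needed for this exponent to equal $\xi(\lambda)_{j_1}$ on a strand labeled $j_1$; any excess reproduces the original cyclotomic relation $x_1^{\lambda_{j_1}}=0$ in $R_{B_n}^\lambda$ and vanishes. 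The argument is by induction on $k$, using the decomposition~\eqref{eq:decKLR1} to reduce the inductive step to the effect of inserting one more strand labeled $1$ and invoking the dot-slide once.

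Surjectivity of each $\pi_{k,\xi}$ is then immediate: every generator of $R_{B_{n-1}}^{\xi(\lambda)}$ is the image of the same diagram sitting to the right of $e_\xi$. That the $\pi_{k,\xi}$ for distinct admissible $\xi$ assemble into a surjection onto the full direct sum follows from the orthogonality of the special idempotents, which is detected by the labels of the strands adjacent to the staircases $p_{\pm t}$ and $p_{\pm i}$.
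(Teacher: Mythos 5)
Your proposal follows essentially the same strategy as the paper's proof: pick out the admissible $\xi$, use the special idempotents $e(p_{-\und{t}^\ell,+\und{i}^m},\und{j})$ as templates, cut down to the sector determined by these idempotents, strip off the staircase $p_{\pm i}$ by sliding generators past it, and verify that the cyclotomic ideal for $\lambda$ is sent to (the generators of) the cyclotomic ideal for $\xi(\lambda)$ via an explicit dot-slide computation, with recursion in $k$ and orthogonality of the special idempotents giving the direct sum. The one organizational difference is that you try to define $\pi_{k,\xi}$ directly on generators of $R^\lambda_{B_n}(k\alpha_1)$ whereas the paper works with the subalgebra $B_{-\und{t}^\ell,+\und{i}^m}$, the sub-subalgebra $\widetilde B$ of diagrams in ``staircase $\times$ residual'' shape, and the quotient $\widetilde B^\zeta$, which sidesteps the well-definedness check and makes the dot-slide computation the only real content; your phrasing ``idempotents not matching $e_\xi$ map to $0$'' is in effect the same projection. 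Where your write-up is least precise is the sentence about excess dots ``reproducing the original cyclotomic relation $x_1^{\lambda_{j_1}}=0$'' --- the correct mechanism, as in the paper's computation of $X_{\pm i}(j,\overline\lambda_j)$, is that after sliding past $p_{\pm i}$ the dot count on the new leftmost strand becomes $\overline{\xi_{\pm i}(\lambda)}_j$, i.e.\ exactly the cyclotomic exponent for the target, with all other terms landing in $\widetilde B^\bot$; there is no ``reproducing of the original relation.'' Filling in the case-by-case dot-slide bookkeeping (in particular the double shift at $j=n$ for $X_{-n}$ and $X_{+(n-1)}$, which is the type-$B$ novelty compared to the type-$A$ model) would make your sketch a complete proof along the paper's lines.
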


\begin{proof}
We first prove that for each each 
$\xi_{-\und{t}^\ell,+\und{i}^m}$ $\lambda$-admissible   
we have a surjection of algebras 
\begin{equation*}%\label{eq:cycincB2}
\pi_{-\und{t}^\ell,+\und{i}^m}\colon 
R^\lambda_{B_n}((m+\ell)\alpha_1) \to%\xra{\ \pi_{-\und{t}^\ell,+\und{i}^m}\ } 
R_{B_{n-1}}^{\xi_{-\und{t}^\ell,+\und{i}^m}(\lambda)} .
\end{equation*} 
To this end it is enough to show that for each 
$\xi_{-\und{t}^\ell,+\und{i}^m}$ as above  
the subalgebra 
\begin{align*} 
B_{-\und{t}^\ell,+\und{i}^m} = &
\bigoplus\limits_{\und{r},\und{s}\in\seq\{\alpha_2,\dotsc, \alpha_n\}}
e(p_{-\und{t}^\ell,+\und{i}^m},\und{r})
\bigl(R^\lambda_{B_n}((m+\ell)\alpha_1)\bigr)
e(p_{-\und{t}^\ell,+\und{i}^m},\und{s})
\end{align*} 
projects onto  $R_{B_{n-1}}^{\xi_{-\und{t}^\ell,+\und{i}^m}(\lambda)}$.

Let 
$\widetilde{B}_{-\und{t}^\ell,+\und{i}^m}\subset {B}_{-\und{t}^\ell,+\und{i}^m}$ 
be the subalgebra generated by 
all elements having a representative given by diagrams 
which consist of a horizontal composition 
of $p_{-\und{t}^\ell,+\und{i}^m}$ and a diagram from 
$R^{\lambda}_{B_n}(0.\alpha_1)$ 
from left to right, as below
\begin{equation*}
\labellist
\small 
\pinlabel $\lambda$ at -30 95 
\pinlabel $p_{-\und{t}^\ell}$ at 70 88
\pinlabel $p_{+\und{i}^m}$ at 236 88 
\pinlabel $R^\lambda_{B_n}(0.\alpha_1)$ at 412 90  
\pinlabel $\dotsc$ at 70 145   \pinlabel $\dotsc$ at 415 145 
\pinlabel $\dotsc$ at 70  35   \pinlabel $\dotsc$ at 415  35 
\pinlabel $\dotsc$ at 235 145 
\pinlabel $\dotsc$ at 235  35  
\endlabellist 
\figins{-20.5}{1.1}{ppbox-cycq}
\end{equation*}
Let $\widetilde{B}_{-\und{t}^\ell,+\und{i}^m}^\bot$ be its complement vector space. 
Moreover, let also $\widetilde{B}_{-\und{t}^\ell,+\und{i}^m}^\zeta$ be the quotient of 
$\widetilde{B}_{-\und{t}^\ell,+\und{i}^m}$ 
by the two sided ideal generated by all diagrams of the form 
\begin{equation*}
\labellist
\small 
\pinlabel $\lambda$ at -30 95 
\pinlabel $p_{+\und{i}^m}$ at 236 88 
\pinlabel $p_{-\und{t}^\ell}$ at 68 88
\pinlabel $\dotsc$ at 70 145   \pinlabel $\dotsc$ at 412 90  
\pinlabel $\dotsc$ at 70  35    
\pinlabel $\dotsc$ at 235 145 
\pinlabel $\dotsc$ at 235  35  
\tiny \hair 2pt
\pinlabel $j_1$ at 356  5 
\pinlabel $j_{\ell}$ at 468  6 
\pinlabel $\zeta_{j_1}$ at 335  96  
\endlabellist 
\figins{-20.5}{1.1}{ppA-cycl}
\end{equation*}
where $\zeta=\overline{\xi_{-\und{t}^\ell,+\und{i}^m}(\lambda)}$. 
The algebras $\widetilde{B}_{-\und{t}^\ell,+\und{i}^m}^\zeta$ and $R_{B_n}^{\xi_{-\und{t}^\ell,+\und{i}^m}(\lambda)}$ 
are isomorphic by the map that sends $p_{-\und{t}^\ell,+\und{i}^m}X\in\widetilde{B}_{-\und{t}^\ell,+\und{i}^m}^\zeta$
to $X\in R_{B_n}^{\xi_{-\und{t}^\ell,+\und{i}^m}(\lambda)}$.

It is enough to show the cases of $(\ell,m)=(1,0)$ and $(\ell,m)=(0,1)$, 
since the general case follows easily by recursion.
For this purpose we consider $r=(r_1,\dotsc ,r_{n-1})$ and compute
\begin{equation*}
X_{\pm i}(j,r_j) = \mspace{18mu}
\labellist
\pinlabel $p_{\pm i}$ at   102 63    
\pinlabel $\dotsc$ at  252 60 
\pinlabel $\dotsc$ at  104 17
\pinlabel $\dotsc$ at  104 108
\pinlabel $\lambda$ at -15 95 
\tiny \hair 2pt
\pinlabel $r_j$    at  -6   65
\pinlabel $j$      at 148   -1
\endlabellist
\figins{-30}{0.90}{pcycR2}
\mspace{24mu}
\end{equation*}
where in the case of $+i$  we consider $1\leq i < n$ and in the case of 
$-i$ we take $1 \leq i \leq n$. 
The computation only uses the KLR relations and goes exactly as in~\cite{vaz2}. 
Denote by 
\begin{equation*}
Z_{\pm i}(j,\tilde{r}_j) = \mspace{18mu}
\labellist
\pinlabel $p_{\pm i}$ at  52 63    
\pinlabel $\dotsc$ at  230 60  
\pinlabel $\dotsc$ at   54 17
\pinlabel $\dotsc$ at   54 108
\pinlabel $\lambda$ at -15 95 
\tiny \hair 2pt
\pinlabel $\tilde{r}_j$    at  120   65
\pinlabel $j$      at 133   -1
\endlabellist
\figins{-30}{0.90}{pcycR2j}
\end{equation*}
For $X_{-i}(j,r_j)$ we obtain for $i \leq n-1$
\begin{align*}
X_{-i}(j,r_j) &= 
\begin{cases}
Z_{-i}(j,r_j+1) + \text{ terms in }\widetilde{B}_{\pm i}^\bot \mspace{35mu} & j=i-1,
\\[0,5ex] 
Z_{-i}(j,r_j-1) + \text{ terms in }\widetilde{B}_{\pm i}^\bot & j = i,
\\[0,5ex] 
Z_{-i}(j,r_j) + \text{ terms in }\widetilde{B}_{\pm i}^\bot & \text{else},
\end{cases}
\end{align*}
while for $i=n$ we get
\begin{align*}
X_{-n}(j,r_j) &= 
\begin{cases}
Z_{-n}(j,r_j+1) + \text{ terms in }\widetilde{B}_{\pm i}^\bot \mspace{35mu} & j=n-1,
\\[0,5ex] 
Z_{-n}(j,r_j-2) + \text{ terms in }\widetilde{B}_{\pm i}^\bot & j = n,
\\[0,5ex] 
Z_{-n}(j,r_j) + \text{ terms in }\widetilde{B}_{\pm i}^\bot & \text{else}. 
\end{cases}
\end{align*}
For $X_{+i}(j,r_j)$ we obtain for $i < n-1$
\begin{align*}
X_{+i}(j,r_j) &= 
\begin{cases}
Z_{+j}(j,r_j-1) + \text{ terms in }\widetilde{B}_{\pm i}^\bot \mspace{35mu} & j=i,
\\[0,5ex] 
Z_{+j}(j,r_j+1) + \text{ terms in }\widetilde{B}_{\pm i}^\bot & j = i+1,
\\[0,5ex] 
Z_{+j}(j,r_j) + \text{ terms in }\widetilde{B}_{\pm i}^\bot & \text{else}. 
\end{cases}
\end{align*}
while for $i=n-1$ we get 
\begin{align*}
X_{+(n-1)}(j,r_j) &= 
\begin{cases}
Z_{+(n-1)}(j,r_j-1) + \text{ terms in }\widetilde{B}_{\pm i}^\bot \mspace{35mu} & j = n-1,
\\[0,5ex] 
Z_{+(n-1)}(j,r_j+2) + \text{ terms in }\widetilde{B}_{\pm i}^\bot & j = n,
\\[0,5ex] 
Z_{+(n-1)}(j,r_j) + \text{ terms in }\widetilde{B}_{\pm i}^\bot & \text{else}. 
\end{cases}
\end{align*}
(we leave the details to the reader). 
Altogether we get that 
\begin{equation*}
\labellist
\pinlabel $p_{\pm i}$ at   102 63    
\pinlabel $\dotsc$ at  233 60 
\pinlabel $\dotsc$ at  104 17
\pinlabel $\dotsc$ at  104 108
\pinlabel $\lambda$ at -15 95 
\tiny \hair 2pt
\pinlabel $r_j$    at  -6   65
\pinlabel $j$      at 148   -1
\endlabellist
\figins{-30}{0.90}{pcycR2}
\mspace{54mu} = \mspace{24mu}
\labellist
\pinlabel $p_{\pm i}$ at  52 63    
\pinlabel $\dotsc$ at  230 60  
\pinlabel $\dotsc$ at   54 17
\pinlabel $\dotsc$ at   54 108
\pinlabel $\lambda$ at -15 95 
\tiny \hair 2pt
\pinlabel $\overline{\xi_{\pm i}(r)}_j$   at  105   95
\pinlabel $j$      at 133   -1
\endlabellist
\figins{-30}{0.90}{pcycR2jj}
\mspace{42mu}
+\text{ terms in }\widetilde{B}_{\pm i}^\bot 
\end{equation*}
Taking $r_i=\overline{\lambda}_i$ we see that $X_{\pm i}(j,\overline{\lambda}_j)$ consists 
of a sum of a term in 
$\widetilde{B}_{\pm i}^\zeta\cong R_{B_{n-1}}^{\xi_{\pm i}(\lambda)}$ 
with terms in $\widetilde{B}_{\pm i}^\bot$. 
This shows that 
$R_{B_n}^\lambda(\alpha_1)$ projects onto $R_{B_{n-1}}^{\xi_{\pm i}(\lambda)}$. 
We call this projection $\pi_{\pm i}$.  The kernel of $\pi_{\pm i}$ is the two-sided ideal 
generated by the elements in $\widetilde{B}_{\pm i}^\bot$ involved above.  
Proceeding recursively one gets that $\pi_{-\und{t}^\ell,+\und{i}^m}$ is a surjection of algebras. 
The lemma now follows from the observation that $R_{B_n}^\lambda((m+\ell)\alpha_1)$ projects canonically onto
$\bigoplus_{\xi_{-\und{t}^\ell,+\und{i}^m}\in\cD_\lambda^{m+\ell}}{B}_{-\und{t}^\ell,+\und{i}^m}$.  
\end{proof}
Summing over $k$ in Lemma~\ref{lem:cycincB} we have the following.
\begin{cor}\label{cor:cycincB}
We have a surjection of algebras 
\begin{equation*}
\pi^\lambda\colon 
R^\lambda_{B_n} \to%\xra{\ \ \pi^\lambda \ \ } 
\bigoplus\limits_{\xi_{-\und{t}^\ell,+\und{i}^m}\in\cD_{B_n,\lambda}}R_{B_{n-1}}^{ \xi_{-\und{t}^\ell,+\und{i}^m}(\lambda) } .
\end{equation*} 
\end{cor}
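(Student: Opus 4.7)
The plan is to obtain $\pi^\lambda$ by assembling the surjections $\pi_{k}$ from Lemma~\ref{lem:cycincB} for all $k\geq 0$. The bookkeeping essentially comes down to two ingredients: the decomposition of $R^\lambda_{B_n}$ by the number of strands labeled $1$, and the disjoint-union decomposition of $\cD_{B_n,\lambda}$ by the cardinality $k=m+\ell$.

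First I would verify the cyclotomic analogue of \eqref{eq:decKLR1}, namely
\begin{equation*}
R^\lambda_{B_n} \;=\; \bigoplus_{k\geq 0} R^\lambda_{B_n}(k\alpha_1).
\end{equation*}
This is immediate from the ambient decomposition \eqref{eq:decKLR1} once one checks that the defining ideal $I^\lambda\subset R_{B_n}$ respects the grading by the number of $\alpha_1$-strands. Since $I^\lambda$ is generated by the diagrams $x_{1,\und{i}}^{\llambda_{i_1}}$ (dots on the leftmost strand), none of the generators change the label of any strand, so the decomposition by the count of $1$-labeled strands descends to the cyclotomic quotient.

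Next, using the definition $\cD_{B_n,\lambda} = \bigsqcup_{k\geq 0}\cD_{B_n,\lambda}^{k}$, I rewrite the target as
\begin{equation*}
\bigoplus_{\xi\in\cD_{B_n,\lambda}} R_{B_{n-1}}^{\xi(\lambda)}
\;=\;
\bigoplus_{k\geq 0}\ \bigoplus_{\xi_{-\und{t}^\ell,+\und{i}^m}\in\cD_{B_n,\lambda}^{k}} R_{B_{n-1}}^{\xi_{-\und{t}^\ell,+\und{i}^m}(\lambda)}.
\end{equation*}
Defining $\pi^\lambda = \bigoplus_{k\geq 0}\pi_{k}$ via the above two decompositions then yields an algebra homomorphism from $R^\lambda_{B_n}$ to the required direct sum, and surjectivity is preserved by the direct sum of surjections.

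The content of the corollary is therefore purely organizational; the real work was done in Lemma~\ref{lem:cycincB}, where one showed that for each admissible $\xi_{-\und{t}^\ell,+\und{i}^m}$ the corresponding summand $R_{B_{n-1}}^{\xi(\lambda)}$ can be recovered from the subalgebra $\widetilde{B}_{-\und{t}^\ell,+\und{i}^m}^{\zeta}$ inside $R^\lambda_{B_n}((m+\ell)\alpha_1)$, modulo terms in the complement $\widetilde{B}_{-\und{t}^\ell,+\und{i}^m}^{\bot}$. With that in hand the only thing to observe is that distinct admissible maps produce distinct special idempotents, so the projections $\pi_{-\und{t}^\ell,+\und{i}^m}$ for varying $(\ell,m)$ are collected into a single direct sum without interference, giving the asserted $\pi^\lambda$.
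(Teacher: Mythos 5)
Your proposal is correct and matches the paper's approach: the paper's entire proof is "Summing over $k$ in Lemma~\ref{lem:cycincB}," and you have simply spelled out the bookkeeping (the decomposition of $R^\lambda_{B_n}$ by $\alpha_1$-strand count and of $\cD_{B_n,\lambda}$ by $k=m+\ell$) that makes this work. The extra remark that $I^\lambda$ respects the $\alpha_1$-grading is a welcome clarification but does not change the argument.
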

This proves the case of $\g_n=\soo$ in Theorem~\ref{thm:cycproj}. 
Moreover, by an adequate choice of the maps $\pi_{-\und{t}^\ell,+\und{i}^m}$, we can rephrase Lemma~\ref{lem:cycincB} 
and Corollary~\ref{cor:cycincB}. 
\begin{cor}  \label{cor:cycinccB} 
For each $(\nu,\mu)\in\tau_{B_n}(\lambda)$ there is a surjection of algebras 
\begin{equation*}
\pi_{(\nu,\mu)}\colon
R^\lambda_{B_n} \to%\xra{\ \pi_{(\nu,\mu)}\ } 
R_{B_{n-1}}^{\mu }   
\end{equation*} 
giving rise to a surjection of algebras
\begin{equation*}
\pi\colon
R^\lambda_{B_n} \to%\xra{\ \ \pi \ \ } 
\bigoplus\limits_{(\nu,\mu)\in\,\tau_{B_n}(\lambda)}R_{B_{n-1}}^{\mu} .
\end{equation*} 
\end{cor}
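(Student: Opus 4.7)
The plan is to observe that this corollary is purely a relabeling of Corollary~\ref{cor:cycincB} along the bijection $\cD_{B_n,\lambda} \leftrightarrow \tau_{B_n}(\lambda)$ established by Definition~\ref{def:admB} and the paragraph immediately following it. The only substantive check is that, under this bijection, the target algebra $R_{B_{n-1}}^{\xi_{-\und{t}^\ell,+\und{i}^m}(\lambda)}$ in the previous corollary actually equals $R_{B_{n-1}}^{\mu}$.

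First, given $(\nu,\mu)\in\tau_{B_n}(\lambda)$, the $\lambda$-admissibility condition forces a unique choice of ordered sequences $\und{t}^\ell$ and $\und{i}^m$: the $i$-th coordinate of $\lambda$ must be decremented $\lambda_i-\nu_i$ times (with $i$ weakly increasing), after which the $(i+1)$-st coordinate of the resulting vector in $\bZ^n$ must be incremented $\mu_{i+1}-\nu_{i+1}$ times (with $i$ weakly decreasing). By direct inspection of the formulas for $\xi_{-i}$ and $\xi_{+i}$ in Section~\ref{ssec:brulesB}, this gives $\xi_{-\und{t}^{\ell}}(\lambda)=\nu$ and hence $\xi_{(\nu,\mu)}(\lambda)=\xi_{+\und{i}^m}(\nu)=\mu$. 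The inbetweenness relations defining $\tau_{B_n}(\lambda)$ ensure that this sequence of operations is admissible, and conversely every admissible map arises this way, so the correspondence $(\nu,\mu)\mapsto \xi_{(\nu,\mu)}$ is a bijection.

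Second, for each $(\nu,\mu)$ I would define
\begin{equation*}
\pi_{(\nu,\mu)}\colon R^\lambda_{B_n} \twoheadrightarrow R^\lambda_{B_n}\bigl((\ell+m)\alpha_1\bigr) \xra{\pi_{-\und{t}^\ell,+\und{i}^m}} R_{B_{n-1}}^{\xi_{(\nu,\mu)}(\lambda)} = R_{B_{n-1}}^{\mu},
\end{equation*}
where the first arrow is the canonical projection from the decomposition~\eqref{eq:decKLR2} onto the summand with exactly $k=\ell+m$ strands labeled $1$, and the second is the surjection produced inside the proof of Lemma~\ref{lem:cycincB} for the admissible map $\xi_{-\und{t}^\ell,+\und{i}^m}=\xi_{(\nu,\mu)}$. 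Both maps are surjective, so the composition is too.

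Finally, assembling these over all $(\nu,\mu)\in\tau_{B_n}(\lambda)$ and using the bijection above to reindex the direct sum in Corollary~\ref{cor:cycincB}, one obtains
\begin{equation*}
\pi\colon R^\lambda_{B_n} \twoheadrightarrow \bigoplus_{\xi\in\cD_{B_n,\lambda}} R_{B_{n-1}}^{\xi(\lambda)} = \bigoplus_{(\nu,\mu)\in\tau_{B_n}(\lambda)} R_{B_{n-1}}^{\mu}.
\end{equation*}
There is no real obstacle in this argument; the only work is the purely combinatorial verification that $\xi_{(\nu,\mu)}(\lambda)=\mu$, which was already implicit in the notation $\xi_{(\mu,\nu)}$ introduced after Definition~\ref{def:admB}. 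The algebraic content---that such surjections exist---is entirely carried by Lemma~\ref{lem:cycincB}.
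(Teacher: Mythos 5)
Your proof is correct and takes essentially the same approach as the paper: the paper itself treats this corollary as a rephrasing of Corollary~\ref{cor:cycincB} along the bijection $\cD_{B_n,\lambda}\leftrightarrow\tau_{B_n}(\lambda)$, asserted in a single sentence without further detail. Your verification that $\xi_{(\nu,\mu)}(\lambda)=\mu$, together with the factorization of $\pi_{(\nu,\mu)}$ through the projection of $R^\lambda_{B_n}$ onto $R^\lambda_{B_n}\bigl((\ell+m)\alpha_1\bigr)$, supplies exactly the routine combinatorial check the paper leaves implicit.
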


\medskip

Fix a $k\geq 1$ and let 
\begin{align*}
\Pi_k^\lambda= \ext_k^\lambda
\colon
R^\lambda_{B_n}(k\alpha_1)\amod
&\to\   
\bigoplus\limits_{\xi_{-\und{t}^\ell,+\und{i}^m}\in\cD_\lambda^{k}}R^{\xi_{-\und{t}^\ell,+\und{i}^m}(\lambda)}_{B_{n-1}}\amod 
\\[1ex]
 M &\mapsto M \otimes_{R^\lambda_{B_n}(k\alpha_1)}
\Bigl(\oplus_{\xi_{-\und{t}^\ell,+\und{i}^m}}R^{\xi_{-\und{t}^\ell,+\und{i}^m}(\lambda)}_{B_{n-1}}\Bigr)
\end{align*}
and $\res_k^\lambda\colon\bigl(\oplus_{\xi_{-\und{t}^\ell,+\und{i}^m}}R^{\xi_{-\und{t}^\ell,+\und{i}^m}(\lambda)}_{B_{n-1}}\bigr)\amod 
\to 
R^\lambda_{B_n}(k\alpha_1)\amod$ 
be respectively the functors of extension of scalars and restriction of scalars 
by the map $\pi_k$ from Lemma~\ref{lem:cycincD}.

Using the surjections 
$\pi_{-\und{t}^\ell,+\und{i}^m}\colon R_{B_n}^{\lambda}(k\alpha_1)\to R_{B_{n-1}}^{\xi_{-\und{t}^\ell,+\und{i}^m}(\lambda)}$ 
for each $\xi_{-\und{t}^\ell,+\und{i}^m}$ that are inherited from the map $\pi_k$  we call 
\begin{align*}
\Pi_{-\und{t}^\ell,+\und{i}^m}^\lambda\colon R_{B_n}^{\lambda}(k\alpha_1)\amod &\to R_{B_{n-1}}^{\xi_{-\und{t}^\ell,+\und{i}^m}(\lambda)}\amod 
\intertext{and} 
\res_{-\und{t}^\ell,+\und{i}^m}^\lambda\colon R_{B_{n-1}}^{\xi_{-\und{t}^\ell,+\und{i}^m}(\lambda)}\amod &\to R_{B_n}^{\lambda}(k\alpha_1)\amod
\end{align*} 
the \emph{components} of $\Pi_k^\lambda$ and $\res_k^\lambda$.

%We can think of these functors induced by the idempotents $p_{\pm i}$ as the functors of extension 
%corresponding with the maps $\xi_{\pm i}$:  
%
%the functor induced by $p_{-\und{i}^m}$ gives a $\nu$ 
%and the one induced by the composite  $p_{-\und{t}^\ell,+\und{i}^m}$ gives a $\mu$ 
%in the Gelfand-Tsetlin pattern $\tau_{B_n}(\lambda)$. 

The following can be proved in the same way as in~\cite{vaz2} for the case of $\sln$.

\begin{lem}\label{lem:BR-biadj-BRFull-branch-k-B}
The functors $\Pi_k^\lambda$ and $\res_k^\lambda$ are biadjoint.
The functor $\Pi_k^\lambda$ is full and essentially surjective
and each $\Pi^\lambda_{-\und{t}^\ell,+\und{i}^m}$  
intertwines the categorical $\mathfrak{so}_{2n-1}$-action. 
\end{lem}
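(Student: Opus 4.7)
The plan is to establish the three assertions in turn, following the template of \cite{vaz2} for type $A_n$ and inserting only the modifications dictated by the type $B$ combinatorics of Section~\ref{ssec:catbranchingB}.

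For biadjointness, I would invoke Lemma~\ref{lem:frobKLR}: both $R^\lambda_{B_n}(k\alpha_1)$ (as a direct summand of $R_{B_n}^\lambda$, cf.~\eqref{eq:decKLR1}) and each $R_{B_{n-1}}^{\xi_{-\und{t}^\ell,+\und{i}^m}(\lambda)}$ are Frobenius, and $\pi_k$ is a surjection between finite-dimensional Frobenius algebras. For such a surjection the bimodule $\bigoplus R_{B_{n-1}}^{\xi_{-\und{t}^\ell,+\und{i}^m}(\lambda)}$ is Frobenius as an $R^\lambda_{B_n}(k\alpha_1)$-bimodule, so extension and restriction of scalars along $\pi_k$ form a biadjoint pair by the standard argument used in \cite{vaz2}.

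For essential surjectivity and fullness the key observation is the surjectivity of $\pi_k$ given by Lemma~\ref{lem:cycincB}. Given $M$ in the target category, let $\pi_k^*M$ denote its pullback to an $R^\lambda_{B_n}(k\alpha_1)$-module along $\pi_k$; then $\Pi_k^\lambda(\pi_k^*M)\cong M$ naturally, since tensoring over the source with the quotient bimodule kills exactly the kernel of $\pi_k$, which already acts trivially on $\pi_k^*M$. The same pullback procedure proves fullness: any morphism $\Pi_k^\lambda(X)\to\Pi_k^\lambda(Y)$ in the target lifts, via the counit of the adjunction and the surjectivity of $\pi_k$, to a morphism between the images under restriction, exactly as in~\cite{vaz2}.

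For the intertwining property of each component $\Pi^\lambda_{-\und{t}^\ell,+\und{i}^m}$, the plan is to verify that for $i\in\{2,\dotsc,n\}$ the map $\imath_i$ of adding a vertical strand labeled $i$ on the right commutes with $\pi_{-\und{t}^\ell,+\und{i}^m}$. This holds because the projection only affects the leftmost region occupied by the idempotent $p_{-\und{t}^\ell,+\und{i}^m}$ (which contains only strands labeled $1$ together with strands labeled in the chain $1\!-\!2\!-\!\dotsm\!-\!n$), and hence commutes with the insertion of a new strand labeled $i\neq 1$ on the far right. A tensor–hom manipulation then yields $\Pi^\lambda_{-\und{t}^\ell,+\und{i}^m}\circ F_i^\lambda\cong F_i^{\xi(\lambda)}\circ \Pi^\lambda_{-\und{t}^\ell,+\und{i}^m}$, and the adjoint assertion for the $E_i$ follows in the standard way once the degree shifts $\{\alpha_i^\vee(\lambda-\nu)-1\}$ appearing in~\eqref{eq:Uaction} are matched.

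The main obstacle, as in \cite{vaz2}, is precisely this bookkeeping of degree shifts on the $E_i$-side: one must check that $\alpha_i^\vee(\lambda-\nu)=\alpha_i^\vee\bigl(\xi_{-\und{t}^\ell,+\und{i}^m}(\lambda)-\nu'\bigr)$ for each $\nu'$ obtained by restricting to the appropriate weight space of the target. This equality holds because $\xi_{-\und{t}^\ell,+\und{i}^m}$ modifies only the entries of $\lambda$ indexed in a way that leaves the $i$-th Dynkin label invariant for $i\neq 1$, so the pairing with $\alpha_i^\vee$ is preserved; verifying this carefully, together with the corresponding statement for the components indexed by $(\mu,\nu)\in\tau_{B_n}(\lambda)$, is where the type $B$ argument diverges minimally from \cite{vaz2}.
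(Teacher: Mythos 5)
Your proposal unpacks the one-line proof-by-reference that the paper actually gives ("same way as in \cite{vaz2}") into the argument that reference presumably contains, and the ingredients you assemble are the right ones: Frobeniusness of both cyclotomic quotients (Lemma~\ref{lem:frobKLR}) upgrades the usual $(\ext,\res)$ adjunction along a surjection of finite-dimensional algebras to a biadjunction, surjectivity of $\pi_k$ gives essential surjectivity and (on projectives) fullness via $\ext(\res(M))\cong M$, and the intertwining for $i\geq 2$ reduces to the observation that $\imath_i$ adds a strand on the far right while the projection $\pi_{-\und{t}^\ell,+\und{i}^m}$ only manipulates the leftmost $p_{-\und{t}^\ell,+\und{i}^m}$-block. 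So this is essentially the paper's approach. One imprecision worth noting: the reason $\alpha_i^\vee(\lambda-\nu)=\alpha_i^\vee(\xi_{-\und{t}^\ell,+\und{i}^m}(\lambda)-\nu')$ for $i\geq 2$ is not that $\xi$ fixes the $i$-th Dynkin label of $\lambda$ (the maps $\xi_{\pm j}$ do perturb several labels); it is that the $\g_{n-1}$-weight of the relevant weight space agrees on the two sides of the branching isomorphism~\eqref{eq:brBB}, so the pairing with $\alpha_i^\vee$ computed in the $\g_n$ picture and in the $\g_{n-1}$ picture coincide. Also, as stated your fullness step needs the lifting along $N\to N/N\ker\pi_k$ which holds when the source is projective — the natural reading of the lemma (consistent with the $K_0(\prmod)$ framework the paper works in) is that fullness is asserted on projectives.
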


\medskip

Finally define the functor 
\begin{align*}
\Pi^{\lambda}=\bigoplus_{k\geq 0}\Pi^\lambda_{k} 
&\colon 
 R^\lambda_{B_n}\amod
\to
\bigoplus\limits_{\xi_{-\und{t}^\ell,+\und{i}^m}\in\cD_\lambda}R^{\xi_{-\und{t}^\ell,+\und{i}^m}(\lambda)}_{B_{n-1}}\amod .
\end{align*} 
Summing over $k$ in Lemma~\ref{lem:BR-biadj-BRFull-branch-k-B} 
proves  the case of $\g_n=\soo$ in Proposition~\ref{prop:intertwiner}.

\n Each component $\Pi_{-\und{t}^\ell,+\und{i}^m}$ of $\Pi_k^\lambda$ descends to  
a surjection 
$$K_0(\Pi_{-\und{t}^\ell,+\und{i}^m})\colon K_0(R_{B_n}^\lambda((m+\ell)\alpha_1))\to K_0(R_{B_{n-1}}^{\xi_{-\und{t}^\ell+\und{i}^m}(\lambda)})$$ 
between the respective Grothendieck groups. 
This surjection intertwines 
the  $\mathfrak{so}_{2n-1}$-action by Lem\-ma~\ref{lem:BR-biadj-BRFull-branch-k-B}.  
The main result of this section now follows by counting dimensions.
\begin{thm}\label{thm:branchrulesB}
Functor $\Pi^{\lambda}$ descends to 
an isomorphism of $\mathfrak{so}_{2n-1}$-representations
\begin{align*} 
K_0(\Pi^{\lambda} )
\colon V^{\soo}_{\lambda} \cong K_0 ( R^\lambda_{B_n} )
\xra{\ \ \cong\ \ }
K_0\biggl(\ \bigoplus\limits_{\xi_{-\und{t}^\ell,+\und{i}^m}\in\cD_\lambda}R^{\xi_{-\und{t}^\ell,+\und{i}^m}(\lambda)}_{B_{n-1}}\biggr)
\cong\bigoplus\limits_{\mu}c(\mu)V_{\mu}^{\mathfrak{so}_{2n-1}} ,
\end{align*}  
with $c(\mu)$ the number of $\nu=(\nu_2, \dotsc , \nu_n)$ such that the pair $(\nu,\mu)$ is in  $\tau_{B_n}(\lambda)$.  
\end{thm}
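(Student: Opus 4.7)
The plan is to deduce the theorem from three ingredients already in place: the identification of Grothendieck groups with highest-weight modules (Theorem~\ref{thm:KKW}), the intertwining and essential surjectivity properties of the functor $\Pi^\lambda$ (Lemma~\ref{lem:BR-biadj-BRFull-branch-k-B}), and the classical branching rule of Section~\ref{ssec:brulesB}. Once $K_0(\Pi^\lambda)$ is shown to be a surjective morphism of $\mathfrak{so}_{2n-1}$-representations, a dimension count forces it to be an isomorphism.

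First I would unpack the right-hand side. Each admissible map $\xi_{-\und{t}^\ell,+\und{i}^m}\in\cD_{B_n,\lambda}$ corresponds to a pair $(\nu,\mu)\in\tau_{B_n}(\lambda)$ with $\xi_{-\und{t}^\ell,+\und{i}^m}(\lambda)=\mu$, so applying Theorem~\ref{thm:KKW} componentwise gives
\begin{equation*}
K_0\biggl(\bigoplus_{\xi_{-\und{t}^\ell,+\und{i}^m}\in\cD_{B_n,\lambda}} R^{\xi_{-\und{t}^\ell,+\und{i}^m}(\lambda)}_{B_{n-1}}\biggr)
\cong \bigoplus_{(\nu,\mu)\in\tau_{B_n}(\lambda)} V^{\mathfrak{so}_{2n-1}}_{\mu}
\cong \bigoplus_{\mu} c(\mu)\, V^{\mathfrak{so}_{2n-1}}_{\mu},
\end{equation*}
and similarly $K_0(R^\lambda_{B_n})\cong V^{\soo}_\lambda$. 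By Lemma~\ref{lem:BR-biadj-BRFull-branch-k-B}, each component $\Pi^\lambda_{-\und{t}^\ell,+\und{i}^m}$ intertwines the categorical $\mathfrak{so}_{2n-1}$-action, hence on Grothendieck groups $K_0(\Pi^\lambda)$ is a map of $\mathfrak{so}_{2n-1}$-representations, where the action on the source factors through the embedding $U_q(\mathfrak{so}_{2n-1})\hookrightarrow U_q(\soo)$.

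Next I would establish surjectivity of $K_0(\Pi^\lambda)$. Since $\pi^\lambda$ in Corollary~\ref{cor:cycincB} is a surjection of (graded, Noetherian) algebras with nilpotent kernel in each graded piece, every primitive idempotent in the target lifts to a primitive idempotent in $R^\lambda_{B_n}$; thus every indecomposable projective over $\bigoplus R^{\xi(\lambda)}_{B_{n-1}}$ is isomorphic to $\Pi^\lambda$ applied to a projective over $R^\lambda_{B_n}$. This, combined with essential surjectivity from Lemma~\ref{lem:BR-biadj-BRFull-branch-k-B}, implies that $K_0(\Pi^\lambda)$ is surjective. Now the classical branching rule~\eqref{eq:brBB} gives
\begin{equation*}
\dim_{\bQ(q)} V^{\soo}_\lambda = \sum_{\mu} c(\mu) \dim_{\bQ(q)} V^{\mathfrak{so}_{2n-1}}_\mu,
\end{equation*}
so both sides of the map $K_0(\Pi^\lambda)$ have the same (finite) $\bQ(q)$-dimension. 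A surjection between finite-dimensional spaces of equal dimension is an isomorphism, and equivariance was already verified, so the map is an isomorphism of $\mathfrak{so}_{2n-1}$-representations.

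The main obstacle I expect is the surjectivity statement at the level of Grothendieck groups: although the functor $\Pi^\lambda$ is essentially surjective on modules by Lemma~\ref{lem:BR-biadj-BRFull-branch-k-B}, one must verify that extension of scalars along $\pi^\lambda$ takes indecomposable projectives to (sums of) projectives whose classes generate $K_0$ of the target, which requires the idempotent-lifting argument through the (nilpotent part of the) kernel of $\pi^\lambda$. All other steps are either formal consequences of Theorem~\ref{thm:KKW} and Proposition~\ref{prop:intertwiner} or dimension bookkeeping against the classical branching rule.
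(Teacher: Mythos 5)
Your proposal is correct and follows essentially the same route as the paper: identify both Grothendieck groups via Theorem~\ref{thm:KKW}, invoke Lemma~\ref{lem:BR-biadj-BRFull-branch-k-B} for $\mathfrak{so}_{2n-1}$-equivariance, and conclude by the dimension count coming from the classical branching rule~\eqref{eq:brBB}. The one place you go beyond what the paper explicitly writes is the justification of surjectivity on $K_0$ (the paper simply asserts that each $\Pi_{-\und{t}^\ell,+\und{i}^m}$ ``descends to a surjection''); your idempotent-lifting argument through the nilpotent kernel of $\pi^\lambda$ is the right way to fill that in and is consistent with the paper's later use of virtual nilpotency of $\ker(\pi_s)$ in the proof of Theorem~\ref{thm:GTcat}.
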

This proves the case of $\g_n=\soo$ in Theorem~\ref{thm:branchrules}.

%%%%%%%%%%%%%%%%%%%%%%%%%%%%%%%%%%%%%%%%
%%%                                  %%%
%%%        Type D_n                  %%%
%%%                                  %%%
%%%%%%%%%%%%%%%%%%%%%%%%%%%%%%%%%%%%%%%%
\subsection{The case of quantum $\so$}\label{ssec:catbranchingD}

We start by defining the special classes of idempotents in $R_{D_n}^\lambda$.

\begin{defn}
We define the idempotents 
$p_{\pm i}$ by
\begin{align*}
p_{-i} &= p_{i, i+1, \dotsc, n-2, n-1, n, n-2, n-3, \dotsc, 2, 1}  &  1 < i \leq n, 
\\
p_{+i} &= p_{i, i-1, i-2, \dotsc, 2, 1}  &  1 \leq i < n. 
\end{align*}
\end{defn}
We denote the horizontal composition $p_{\pm j}p_{\pm k}$ by $p_{\pm j,\pm k}$. 
We allow ourselves another notational simplification and write 
$p_{\varepsilon j, \varepsilon k}=p_{\varepsilon jk}$, whenever $\varepsilon = \pm 1 $.

\begin{defn}
The idempotent 
$e'(p_{+\und{i}^m,-\und{t}^\ell},\und{j})\in R_{D_n}^\lambda(\nu+(m+\ell)\alpha_1)$  
is said to be a \emph{special idempotent} if 
$\xi_{+\und{i}^m,-\und{t}^\ell}$ is $\lambda$-admissible.  
We use the notation $e(p_{+\und{i}^m,-\und{t}^\ell},\und{j})$ for special idempotents.  
\end{defn}

\medskip

In the following we give the maps between some cyclotomic 
KLR algebras that are necessary to obtain the categorical branching rule.

\begin{lem}\label{lem:cycincD} 
For each $k \geq 0$ there is a surjection of algebras 
\begin{equation*}%\label{eq:cycinccD}
\pi_{k}\colon
R^\lambda_{D_n}(k\alpha_1) \to%\xra{\ \pi_{k}\ }  
\bigoplus\limits_{\xi_{+\und{i}^m,-\und{t}^\ell}\in\cD_\lambda^{k}}R_{D_{n-1}}^{\xi_{+\und{i}^m,-\und{t}^\ell}(\lambda)}.
\end{equation*} 
\end{lem}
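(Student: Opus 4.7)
The plan is to mimic very closely the proof of Lemma~\ref{lem:cycincB} from the $B_n$ case, with the essential input being replaced by the $D_n$ Dynkin data. More precisely, I would first reduce to showing that for each $\lambda$-admissible $\xi_{+\und{i}^m,-\und{t}^\ell}$ there is an individual surjection
\begin{equation*}
\pi_{+\und{i}^m,-\und{t}^\ell}\colon R^\lambda_{D_n}((m+\ell)\alpha_1)\to R_{D_{n-1}}^{\xi_{+\und{i}^m,-\und{t}^\ell}(\lambda)},
\end{equation*}
since summing over all such admissible pairs will, after the obvious projection of $R^\lambda_{D_n}(k\alpha_1)$ onto the direct sum of the subalgebras $B_{+\und{i}^m,-\und{t}^\ell}$ cut out by the special idempotents, give the full statement.

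To construct $\pi_{+\und{i}^m,-\und{t}^\ell}$ I would introduce the same three auxiliary objects used in the $B_n$ proof: the subalgebra $\widetilde{B}_{+\und{i}^m,-\und{t}^\ell}$ of ``factorized'' diagrams sitting inside $B_{+\und{i}^m,-\und{t}^\ell}$, its complementary vector subspace $\widetilde{B}_{+\und{i}^m,-\und{t}^\ell}^\bot$, and its cyclotomic-like quotient $\widetilde{B}_{+\und{i}^m,-\und{t}^\ell}^\zeta$ with $\zeta=\overline{\xi_{+\und{i}^m,-\und{t}^\ell}(\lambda)}$. The isomorphism $\widetilde{B}_{+\und{i}^m,-\und{t}^\ell}^\zeta\cong R_{D_{n-1}}^{\xi_{+\und{i}^m,-\und{t}^\ell}(\lambda)}$ given by stripping off the $p$-part on the left is formal and works exactly as in the $B_n$ case. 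It then suffices to handle the two base cases $(m,\ell)=(1,0)$ and $(m,\ell)=(0,1)$ and recurse.

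The heart of the matter is the local computation $X_{\pm i}(j,r_j)$, i.e.\ sliding a dot on the $j$-th outgoing strand labeled $\alpha_j$ through the idempotent $p_{\pm i}$ and matching it against a ``target'' diagram $Z_{\pm i}(j,\tilde r_j)$, modulo $\widetilde{B}_{\pm i}^\bot$. This is where the $D_n$ data differs from the $B_n$ data: the string $p_{-i}=p_{i,i+1,\dotsc,n-2,n-1,n,n-2,\dotsc,2,1}$ has a \emph{fork} at the $(n-1,n)$-end rather than the double bond present in $B_n$, so the only cases that need a fresh calculation are those where $j\in\{n-2,n-1,n\}$ in $X_{-i}$ near the fork. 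Using relation~\eqref{eq:R2} with the appropriate edge data of the $D_n$ quiver (labels $n-1$ and $n$ are disconnected from each other and singly bonded to $n-2$) one gets
\begin{align*}
X_{-i}(j,r_j)=
\begin{cases}
Z_{-i}(j,r_j+1)+\text{ terms in }\widetilde{B}_{-i}^\bot & j=i-1,\\[0.4ex]
Z_{-i}(j,r_j-1)+\text{ terms in }\widetilde{B}_{-i}^\bot & j=i,\\[0.4ex]
Z_{-i}(j,r_j)+\text{ terms in }\widetilde{B}_{-i}^\bot & \text{else},
\end{cases}
\end{align*}
and analogously for $X_{+i}(j,r_j)$, with no extra $\pm 2$ corrections (these were caused by the double-bond in $B_n$). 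Consequently, setting $r_j=\overline{\lambda}_j$ exhibits $X_{\pm i}(j,\overline{\lambda}_j)$ as an element of $\widetilde{B}_{\pm i}^\zeta$ plus terms in $\widetilde{B}_{\pm i}^\bot$, which produces the desired projection and whose kernel is the two-sided ideal generated by those extra terms.

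The main obstacle, and the only genuinely new verification, is this fork computation at the $(n-1,n)$ tail: one must check carefully, using~\eqref{eq:R2},~\eqref{eq:R3} and~\eqref{eq:dotslide}, that the dot-slide through the segment $\dotsb,n-2,n-1,n,n-2,\dotsb$ of $p_{-i}$ produces only a single $\pm 1$ shift on the ``matching'' strand and otherwise merely reproduces $Z_{-i}(j,r_j)$ modulo $\widetilde{B}_{-i}^\bot$; once this is verified, the recursion on the length $m+\ell$ proceeds verbatim as in~\cite{vaz2} and the $B_n$ proof above, and summing the surjections $\pi_{+\und{i}^m,-\und{t}^\ell}$ over all $\xi\in\cD_{D_n,\lambda}^k$ via the canonical projection of $R^\lambda_{D_n}(k\alpha_1)$ onto the direct sum of the $B_{+\und{i}^m,-\und{t}^\ell}$ yields~$\pi_k$.
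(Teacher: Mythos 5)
Your proposal follows the same line as the paper's proof and correctly identifies the overall structure: reduce to individual surjections for each $\lambda$-admissible $\xi_{+\und{i}^m,-\und{t}^\ell}$, introduce the ``factorized'' subalgebra $\widetilde{D}_{+\und{i}^m,-\und{t}^\ell}$, its complement, and its cyclotomic quotient $\widetilde{D}^\zeta$, verify the dot-slide $X_{\pm i}(j,r_j)\equiv Z_{\pm i}(j,\tilde r_j)$ modulo $\widetilde{D}^\bot$ in the base cases $(m,\ell)\in\{(1,0),(0,1)\}$, and recurse. Your formula for $X_{-i}(j,r_j)$ matches the paper's exactly. You are also right that the $\pm 2$ shifts coming from the double bond in $B_n$ disappear in $D_n$.

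There is, however, a genuine imprecision in your treatment of $X_{+i}$. You state it ``analogously'' with the same three cases ($j=i$, $j=i+1$, else), but the paper records \emph{two additional} non-generic cases:
\begin{align*}
X_{+i}(j,r_j)=Z_{+i}(j,r_j+1)+\text{ terms in }\widetilde{D}_{\pm i}^\bot
\qquad\text{for } (i,j)=(n-2,n)\ \text{or}\ (i,j)=(n-1,n).
\end{align*}
These extra shifts are needed: e.g.\ for $i=n-2$ one computes $\overline{\xi_{+(n-2)}(\lambda)}_n=\lambda_{n-1}+1+\lambda_n=\overline{\lambda}_n+1$, so without the $+1$ shift at $j=n$ the exponent would not land in $\widetilde{D}_{+i}^\zeta$ for $\zeta=\overline{\xi_{+i}(\lambda)}$, and the claimed surjection onto $R_{D_{n-1}}^{\xi_{+i}(\lambda)}$ would fail. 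Note also that your diagnosis of where the new verification lives is slightly off: these extra cases do \emph{not} arise from the literal fork segment $\dotsc,n-2,n-1,n,n-2,\dotsc$ inside $p_{-i}$ (the paper's $X_{-i}$ formula is completely generic there, exactly as you write), but rather from the adjacency $n\sim n-2$ in the $D_n$ diagram interacting with the external strand labeled $n$ as it crosses $p_{+i}$ for $i\in\{n-2,n-1\}$ (so $p_{+i}$ contains a strand labeled $n-2$). This is a small and easily repaired gap, but as written your case analysis for $X_{+i}$ is incomplete and the subsequent cyclotomic check would not go through at $j=n$.
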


\begin{proof} 
We first prove that for each each 
$\xi_{+\und{i}^m,-\und{t}^\ell}$ $\lambda$-admissible   
we have a surjection of algebras 
\begin{equation*}%\label{eq:cycincD2}
\pi_{+\und{i}^m,-\und{t}^\ell}\colon
R^\lambda_{D_n}((m+\ell)\alpha_1) \to%\xra{\ \pi_{+\und{i}^m,-\und{t}^\ell}\ } 
R_{D_{n-1}}^{\xi_{+\und{i}^m,-\und{t}^\ell}(\lambda)} .
\end{equation*} 
To this end it is enough to show that for each 
$\xi_{+\und{i}^m,-\und{t}^\ell}$ as above the subalgebra 
\begin{align*} 
D_{+\und{i}^m,-\und{t}^\ell} = &
\bigoplus\limits_{\und{r},\und{s}\in\seq\{\alpha_2,\dotsc ,\alpha_n\}}
e(p_{+\und{i}^m,-\und{t}^\ell},\und{r})
\bigl(R^\lambda_{D_n}((m+\ell)\alpha_1)\bigr)
e(p_{+\und{i}^m,-\und{t}^\ell},\und{s})
\end{align*} 
projects onto  $R_{D_{n-1}}^{\xi_{+\und{i}^m,-\und{t}^\ell}(\lambda)}$.

Let 
$\widetilde{D}_{+\und{i}^m,-\und{t}^\ell}\subset {D}_{+\und{i}^m,-\und{t}^\ell}$ 
be the subalgebra generated by 
all elements having a representative given by diagrams 
which consist of a horizontal composition 
of $p_{+\und{i}^m,-\und{t}^\ell}$ and a diagram from $R^{\lambda}_{D_n}(0.\alpha_1)$
from left to right, as below
\begin{equation*}
\labellist
\small 
\pinlabel $\lambda$ at -30 95 
\pinlabel $p_{+\und{i}^m}$ at 72 88 
\pinlabel $p_{-\und{t}^\ell}$ at 234 88
\pinlabel $R^\lambda_{D_n}(0.\alpha_1)$ at 412 90  
\pinlabel $\dotsc$ at 70 145   \pinlabel $\dotsc$ at 415 145 
\pinlabel $\dotsc$ at 70  35   \pinlabel $\dotsc$ at 415  35 
\pinlabel $\dotsc$ at 235 145 
\pinlabel $\dotsc$ at 235  35  
\endlabellist 
\figins{-20.5}{1.1}{ppbox-cycq}
\end{equation*}
Let $\widetilde{D}_{+\und{i}^m,-\und{t}^\ell}^\bot$ be its complement vector space. 
Moreover, let also $\widetilde{D}_{+\und{i}^m,-\und{t}^\ell}^\zeta$ be the quotient of $\widetilde{D}_{+\und{i}^m,-\und{t}^\ell}$ 
by the two sided ideal 
generated by all diagrams of the form 
\begin{equation*}
\labellist
\small 
\pinlabel $\lambda$ at -30 95 
\pinlabel $p_{+\und{i}^m}$ at 72 88 
\pinlabel $p_{-\und{t}^\ell}$ at 234 88
\pinlabel $\dotsc$ at 70 145   \pinlabel $\dotsc$ at 412 90  
\pinlabel $\dotsc$ at 70  35    
\pinlabel $\dotsc$ at 235 145 
\pinlabel $\dotsc$ at 235  35  
\tiny \hair 2pt
\pinlabel $j_1$ at 356  5 
\pinlabel $j_{\ell}$ at 468  6 
\pinlabel $\zeta_{j_1}$ at 335  96  
\endlabellist 
\figins{-20.5}{1.1}{ppA-cycl}
\end{equation*}
where $\zeta=\overline{\xi_{+\und{i}^m,-\und{t}^\ell}(\lambda)}$. 
The algebras $\widetilde{D}_{+\und{i}^m,\und{t}^\ell}^\zeta$ and $R_{D_n}^{\xi_{+\und{i}^m,-\und{t}^\ell}(\lambda)}$ 
are isomorphic by the map that sends $p_{+\und{i}^m,-\und{t}^\ell}X$ to $X$.

It is enough to show the cases of $(m,\ell)=(1,0)$ and $(m,\ell)=(0,1)$, 
sincethe general case follows easily by recursion.
For this purpose we consider $r=(r_1,\dotsc ,r_{n-1})$ and compute
\begin{equation*}
X_{\pm i}(j,r_j) = \mspace{18mu}
\labellist
\pinlabel $p_{\pm i}$ at   102 63    
\pinlabel $\dotsc$ at  252 63 
\pinlabel $\dotsc$ at  104 17
\pinlabel $\dotsc$ at  104 108
\pinlabel $\lambda$ at -15 95 
\tiny \hair 2pt
\pinlabel $r_j$    at  -6   65
\pinlabel $j$      at 148   -1
\endlabellist
\figins{-30}{0.90}{pcycR2}
\mspace{24mu}
\end{equation*}
where in the case of $+i$  we consider $1\leq i < n$ and in the case of 
$-i$ we take $1 < i \leq n$. 
The computation only uses the KLR relations and goes exactly as in~\cite{vaz2}. 

Denote by 
\begin{equation*}
Z_{\pm i}(j,\tilde{r}_j) = \mspace{18mu}
\labellist
\pinlabel $p_{\pm i}$ at  52 63    
\pinlabel $\dotsc$ at  230 60  
\pinlabel $\dotsc$ at   54 17
\pinlabel $\dotsc$ at   54 108
\pinlabel $\lambda$ at -15 95 
\tiny \hair 2pt
\pinlabel $\tilde{r}_j$    at  120   65
\pinlabel $j$      at 133   -1
\endlabellist
\figins{-30}{0.90}{pcycR2j}
\end{equation*}
For the $X_{-i}(j,r_j)$ we obtain for $i \leq n-1$
\begin{align*}
X_{-i}(j,r_j) &= 
\begin{cases}
Z_{-i}(j,r_j+1) + \text{ terms in }\widetilde{D}_{\pm i}^\bot \mspace{35mu} & j=i-1,
\\[0,5ex] 
Z_{-i}(j,r_j-1) + \text{ terms in }\widetilde{D}_{\pm i}^\bot & j = i,
\\[0,5ex] 
Z_{-i}(j,r_j) + \text{ terms in }\widetilde{D}_{\pm i}^\bot & \text{else}. 
\end{cases}
\end{align*}
For $X_{+i}(j,r_j)$ we obtain
\begin{align*}
X_{+i}(j,r_j) &= 
\begin{cases}
Z_{+i}(j,r_j-1) + \text{ terms in }\widetilde{D}_{\pm i}^\bot \mspace{35mu} & j=i,
\\[0,5ex] 
Z_{+i}(j,r_j+1) + \text{ terms in }\widetilde{D}_{\pm i}^\bot & j = i+1 \neq n, 
\\[0,5ex] 
Z_{+i}(j,r_j+1) + \text{ terms in }\widetilde{D}_{\pm i}^\bot & i=n-2, j = n, 
\\[0,5ex] 
Z_{+i}(j,r_j+1) + \text{ terms in }\widetilde{D}_{\pm i}^\bot & i=n-1, j = n, 
\\[0,5ex] 
Z_{+i}(j,r_j) + \text{ terms in }\widetilde{D}_{\pm i}^\bot & \text{else} 
\end{cases}
\end{align*}
(we leave the details to the reader). 
Altogether we get that 
\begin{equation*}
\labellist
\pinlabel $p_{\pm i}$ at   102 63    
\pinlabel $\dotsc$ at  233 60 
\pinlabel $\dotsc$ at  104 17
\pinlabel $\dotsc$ at  104 108
\pinlabel $\lambda$ at -15 95 
\tiny \hair 2pt
\pinlabel $r_j$    at  -6   65
\pinlabel $j$      at 148   -1
\endlabellist
\figins{-30}{0.90}{pcycR2}
\mspace{54mu} = \mspace{24mu}
\labellist
\pinlabel $p_{\pm i}$ at  52 63    
\pinlabel $\dotsc$ at  230 60  
\pinlabel $\dotsc$ at   54 17
\pinlabel $\dotsc$ at   54 108
\pinlabel $\lambda$ at -15 95 
\tiny \hair 2pt
\pinlabel $\overline{\xi_{\pm i}(r)}_j$   at  105   95
\pinlabel $j$      at 133   -1
\endlabellist
\figins{-30}{0.90}{pcycR2jj}
\mspace{42mu}
+\text{ terms in }\widetilde{D}_{\pm i}^\bot 
\end{equation*}
Taking $r_i=\overline{\lambda}_i$ we see that $X_{\pm i}(j,\overline{\lambda}_j)$ consists 
of a sum of a term in 
$\widetilde{D}_{\pm i}^\zeta\cong R_{D_{n-1}}^{\xi_{\pm i}(\lambda)}$ 
with terms in $\widetilde{D}_{\pm i}^\bot$. 
This shows that 
$R_{D_n}^\lambda(\alpha_1)$ projects onto $R_{D_{n-1}}^{\xi_{\pm i}(\lambda)}$. 
We call this projection $\pi_{\pm i}$.  The kernel of $\pi_{\pm i}$ is the two-sided ideal 
generated by the elements in $\widetilde{D}_{\pm i}^\bot$ involved above.  
Proceeding recursively one gets that $\pi_{+\und{i}^m,-\und{t}^\ell}$ is a surjection of algebras. 
The lemma now follows from the observation that $R_{D_n}^\lambda((m+\ell)\alpha_1)$ projects canonically onto
$\bigoplus_{\xi_{+\und{i}^m,-\und{t}^\ell}\in\cD_\lambda^{m+\ell}}{D}_{+\und{i}^m,-\und{t}^\ell}$.  
\end{proof}
Summing over $k$ in Lemma~\ref{lem:cycincD} we have the following.
\begin{cor}\label{cor:cycincD}
We have a surjection of algebras 
\begin{equation*}
\pi^\lambda\colon
R^\lambda_{D_n} \to%\xra{\ \ \pi^\lambda \ \ } 
\bigoplus\limits_{\xi_{+\und{i}^m,-\und{t}^\ell}\in\cD_{D_n,\lambda}}R_{D_{n-1}}^{ \xi_{+\und{i}^m,-\und{t}^\ell}(\lambda) } .
\end{equation*} 
\end{cor}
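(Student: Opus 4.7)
The plan is to deduce the corollary directly from Lemma~\ref{lem:cycincD} by decomposing $R^\lambda_{D_n}$ according to the number of strands labeled $1$. The analogue of~\eqref{eq:decKLR1}--\eqref{eq:decKLR2} in the cyclotomic setting gives
\begin{equation*}
R^\lambda_{D_n} \;=\; \bigoplus_{k\geq 0} R^\lambda_{D_n}(k\alpha_1),
\end{equation*}
since the cyclotomic ideal $I^\lambda$ is generated by diagrams whose leftmost strand carries a fixed power of a dot, and in particular is homogeneous with respect to the grading by the number of strands labeled $1$. Hence the decomposition of $R_{D_n}$ into the subalgebras $R_{D_n}(k\alpha_1)$ descends to $R^\lambda_{D_n}$.

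Next I would invoke Lemma~\ref{lem:cycincD} to obtain, for each $k\geq 0$, a surjection of algebras
\begin{equation*}
\pi_k\colon R^\lambda_{D_n}(k\alpha_1)\;\twoheadrightarrow\;
\bigoplus_{\xi_{+\und{i}^m,-\und{t}^\ell}\in\cD^k_{D_n,\lambda}} R_{D_{n-1}}^{\xi_{+\und{i}^m,-\und{t}^\ell}(\lambda)}.
\end{equation*}
Taking the direct sum $\pi^\lambda=\bigoplus_{k\geq 0}\pi_k$ yields an algebra map from $R^\lambda_{D_n}$ to the direct sum, over all $k$ and all $\lambda$-admissible $\xi_{+\und{i}^m,-\und{t}^\ell}\in\cD^k_{D_n,\lambda}$, of the cyclotomic algebras $R_{D_{n-1}}^{\xi_{+\und{i}^m,-\und{t}^\ell}(\lambda)}$. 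By the identification
\begin{equation*}
\cD_{D_n,\lambda} \;=\; \bigsqcup_{k\geq 0}\cD^k_{D_n,\lambda}
\end{equation*}
noted immediately after Definition~\ref{def:admD}, this is exactly the target stated in the corollary.

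Finally, surjectivity and the algebra map property transfer from the $\pi_k$ because both source and target decompose as direct sums indexed by $k$, and different blocks are mutually annihilating idempotent components; concretely, the identity of $R^\lambda_{D_n}(k\alpha_1)$ maps to the identity of the $k$-indexed summand of the target, and products respect the block decomposition. The only point requiring a little care is the compatibility of the $k$-grading on $R^\lambda_{D_n}$ with the cyclotomic quotient, which I addressed in the first paragraph; once this is in place, the corollary is a formal consequence of Lemma~\ref{lem:cycincD} with no further computation needed. I do not anticipate a genuine obstacle here, as the main algebraic content has already been absorbed into the proof of the lemma.
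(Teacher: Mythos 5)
Your proof is correct and takes essentially the same approach as the paper, which simply says ``Summing over $k$ in Lemma~\ref{lem:cycincD} we have the following.'' You have merely spelled out the routine details (homogeneity of $I^\lambda$ with respect to the $\alpha_1$-grading, the identification $\cD_{D_n,\lambda}=\bigsqcup_{k\geq 0}\cD^k_{D_n,\lambda}$, and the block-diagonal nature of the direct sum) that the paper leaves implicit.
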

This proves Theorem~\ref{thm:cycproj} in the case of $\g_n=\so$. 
Moreover, by an adequate choice of the maps $\pi_{+\und{i}^m,-\und{t}^\ell}$, we can rephrase Lemma~\ref{lem:cycincD} 
and Corollary~\ref{cor:cycincD}. 
\begin{cor}  \label{cor:cycinccD} 
For each $(\nu,\mu)\in\tau_{D_n}(\lambda)$ there is a surjection of algebras 
\begin{equation*}
\pi_{(\nu,\mu)}\colon
R^\lambda_{D_n} \to%\xra{\ \pi_{(\nu,\mu)}\ } 
R_{D_{n-1}}^{\mu }   
\end{equation*} 
giving rise to a surjection of algebras
\begin{equation*}
\pi\colon
R^\lambda_{D_n} \to%\xra{\ \ \pi \ \ } 
\bigoplus\limits_{(\nu,\mu)\in\,\tau_{D_n}(\lambda)}R_{D_{n-1}}^{\mu} .
\end{equation*} 
\end{cor}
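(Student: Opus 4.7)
The plan is to observe that Corollary \ref{cor:cycinccD} is a pure relabeling of Lemma \ref{lem:cycincD} (summed as in Corollary \ref{cor:cycincD}) by means of the bijection between $\lambda$-admissible lattice maps and Gelfand-Tsetlin pairs. Concretely, by Definition \ref{def:admD} and the discussion following it, the correspondence
\[
\xi_{+\und{i}^m,-\und{t}^\ell} \ \longleftrightarrow\ (\nu,\mu) := \bigl(\xi_{+\und{i}^m}(\lambda),\, \xi_{+\und{i}^m,-\und{t}^\ell}(\lambda)\bigr)
\]
sets up a bijection $\cD_{D_n,\lambda} \xrightarrow{\ \sim\ } \tau_{D_n}(\lambda)$. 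Under this bijection the target algebra $R_{D_{n-1}}^{\xi_{+\und{i}^m,-\und{t}^\ell}(\lambda)}$ that appears in Lemma \ref{lem:cycincD} is precisely $R_{D_{n-1}}^{\mu}$.

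With this identification the first step of the proof is to define, for each $(\nu,\mu) \in \tau_{D_n}(\lambda)$, the surjection
\[
\pi_{(\nu,\mu)} \ := \ \pi_{+\und{i}^m,-\und{t}^\ell} \colon R^\lambda_{D_n} \to R_{D_{n-1}}^{\mu},
\]
where $\xi_{+\und{i}^m,-\und{t}^\ell}$ is the unique $\lambda$-admissible map associated to $(\nu,\mu)$ and $\pi_{+\und{i}^m,-\und{t}^\ell}$ is the algebra surjection constructed in the proof of Lemma \ref{lem:cycincD} (composed with the canonical projection $R^\lambda_{D_n} \twoheadrightarrow R^\lambda_{D_n}((m+\ell)\alpha_1)$ coming from the direct sum decomposition \eqref{eq:decKLR1}). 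The second step is then to assemble these componentwise maps. Summing over all admissible maps in $\cD_{D_n,\lambda}^{k}$ produces the algebra surjection $\pi_k$ of Lemma \ref{lem:cycincD}, and then further summing over $k \geq 0$ as in the proof of Corollary \ref{cor:cycincD} yields the total surjection
\[
\pi \colon R^\lambda_{D_n} \to \bigoplus_{(\nu,\mu)\in\tau_{D_n}(\lambda)} R_{D_{n-1}}^{\mu}.
\]

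Since the construction is entirely a matter of reindexing already-established surjections, there is no genuine obstacle; the only point requiring care is the bookkeeping that the bijection $\cD_{D_n,\lambda} \leftrightarrow \tau_{D_n}(\lambda)$ respects the assignment of target cyclotomic quotients, so that $\pi_{(\nu,\mu)}$ indeed lands in $R_{D_{n-1}}^{\mu}$ rather than in a cyclotomic quotient indexed by the intermediate weight $\nu$. This is immediate from the fact that the admissibility condition forces $\xi_{+\und{i}^m,-\und{t}^\ell}(\lambda)=\mu$, which is precisely the terminal row of the Gelfand-Tsetlin pattern \eqref{eq:GTpattD}.
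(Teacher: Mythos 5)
Your proof is correct and coincides in substance with the paper's own (very brief) justification, which simply states that the corollary is obtained from Lemma~\ref{lem:cycincD} and Corollary~\ref{cor:cycincD} ``by an adequate choice of the maps $\pi_{+\und{i}^m,-\und{t}^\ell}$'' — that is, by precisely the reindexing via the bijection $\cD_{D_n,\lambda}\leftrightarrow\tau_{D_n}(\lambda)$ from Definition~\ref{def:admD} that you spell out. You also correctly flag the one genuine bookkeeping point, that $\xi_{+\und{i}^m,-\und{t}^\ell}(\lambda)=\mu$ so the target is $R_{D_{n-1}}^{\mu}$ and not a quotient indexed by the intermediate weight $\nu$, and you correctly precompose with the projection of $R_{D_n}^\lambda$ onto its summand $R_{D_n}^\lambda((m+\ell)\alpha_1)$ coming from~\eqref{eq:decKLR1}.
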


\medskip

Fix a $k\geq 1$ and let 
\begin{align*}
\Pi_k^\lambda= \ext_k^\lambda
\colon
R^\lambda_{D_n}(k\alpha_1)\amod
&\to\   
\bigoplus\limits_{\xi_{+\und{i}^m,-\und{t}^\ell}\in\cD_\lambda^{k}}R^{\xi_{+\und{i}^m,\und{t}^\ell}(\lambda)}_{D_{n-1}}\amod 
\\[1ex]
 M &\mapsto M \otimes_{R^\lambda_{D_n}(k\alpha_1)}
\Bigl(\oplus_{\xi_{+\und{i}^m,-\und{t}^\ell}}R^{\xi_{+\und{i}^m,-\und{t}^\ell}(\lambda)}_{D_{n-1}}\Bigr)
\end{align*}
and $\res_k^\lambda\colon\bigl(\oplus_{\xi_{+\und{i}^m,-\und{t}^\ell}}R^{\xi_{+\und{i}^m,\und{t}^\ell}(\lambda)}_{D_{n-1}}\bigr)\amod 
\to 
R^\lambda_{D_n}(k\alpha_1)\amod$ 
be respectively the functors of extension of scalars and restriction of scalars 
by the map $\pi_k$ from Lemma~\ref{lem:cycincD}.

Using the surjections 
$\pi_{+\und{i}^m,-\und{t}^\ell}\colon R_{D_n}^{\lambda}(k\alpha_1)\to R_{D_{n-1}}^{\xi_{+\und{i}^m,-\und{t}^m}(\lambda)}$
for each $\xi_{+\und{i}^m,-\und{t}^\ell}$ that are inherited from the map $\pi_k$  we call 
\begin{align*}
\Pi_{+\und{i}^m,-\und{t}^\ell}^\lambda\colon R_{D_n}^{\lambda}(k\alpha_1)\amod &\to R_{D_{n-1}}^{\xi_{+\und{i}^m,-\und{t}^\ell}(\lambda)}\amod 
\intertext{and} 
\res_{+\und{i}^m,-\und{t}^\ell}^\lambda\colon R_{D_{n-1}}^{\xi_{+\und{i}^m,-\und{t}^\ell}(\lambda)}\amod &\to R_{D_n}^{\lambda}(k\alpha_1)\amod
\end{align*} 
the \emph{components} of $\Pi_k^\lambda$ and $\res_k^\lambda$.

The following can be proved in the same way as in~\cite{vaz2} for the case of $\sln$.

\begin{lem}\label{lem:BR-biadj-BRFull-branch-k-D}
The functors $\Pi_k^\lambda$ and $\res_k^\lambda$ are biadjoint.
The functor $\Pi_k^\lambda$ is full and essentially surjective
and each $\Pi^\lambda_{+\und{i}^m,-\und{t}^\ell}$  
intertwines the categorical $\mathfrak{so}_{2n-2}$-action. 
\end{lem}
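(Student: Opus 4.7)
The plan is to mirror the argument given in~\cite{vaz2} for type $A_n$, using the surjection $\pi_k$ from Lemma~\ref{lem:cycincD} as the ``pivot'' from which all three assertions descend. The three claims (biadjointness, fullness/essential surjectivity, intertwining) are logically independent of each other except that the intertwining of the action of the $E_i^\lambda$ will be deduced from the intertwining of the $F_i^\lambda$ via biadjointness, so I would establish them in that order.

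First, for biadjointness: $\Pi_k^\lambda$ and $\res_k^\lambda$ are extension and restriction of scalars along the surjection $\pi_k$. By Lemma~\ref{lem:frobKLR}, the cyclotomic algebras $R_{D_n}^\lambda$ and each $R_{D_{n-1}}^{\xi_{+\und{i}^m,-\und{t}^\ell}(\lambda)}$ are Frobenius; moreover $R_{D_n}^\lambda(k\alpha_1)$ inherits a Frobenius structure as a summand. A morphism of Frobenius algebras makes the extension and restriction of scalars functors biadjoint, so I would only need to check that $\pi_k$ is compatible with the Frobenius forms (up to a uniform grading shift). This compatibility reduces to verifying it componentwise, for each $\pi_{+\und{i}^m,-\und{t}^\ell}$, which in turn follows by tracking the explicit diagrammatic form of the Frobenius traces as in~\cite{W1} and using the description of the kernel of $\pi_{+\und{i}^m,-\und{t}^\ell}$ computed inside the proof of Lemma~\ref{lem:cycincD}.

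Second, fullness and essential surjectivity of $\Pi_k^\lambda=\ext_k^\lambda$ follow formally from the surjectivity of $\pi_k$: any module $N$ over the target algebra lifts via $\pi_k$ to a module $\res_k^\lambda N$ over the source whose extension of scalars recovers $N$, and any homomorphism of target modules is already a homomorphism of their pullbacks. This is identical to the corresponding step in~\cite{vaz2}.

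Finally, to show that each component $\Pi^\lambda_{+\und{i}^m,-\und{t}^\ell}$ intertwines the categorical $\mathfrak{so}_{2n-2}$-action, I would use that for $i\in\{2,\dotsc,n\}$ the functor $F_i^\lambda$ is induction along the map $\imath_i$ of adding a vertical strand labeled $i$ on the right of a diagram. Because the special idempotent $p_{+\und{i}^m,-\und{t}^\ell}$ sits entirely at the leftmost portion of each diagram in $D_{+\und{i}^m,-\und{t}^\ell}$, the operation $\imath_i$ does not interact with it, so the square
\begin{equation*}
\pi_{+\und{i}^m,-\und{t}^\ell}\circ\imath_i^{R_{D_n}^\lambda}=\imath_i^{R_{D_{n-1}}^{\xi(\lambda)}}\circ\pi_{+\und{i}^m,-\und{t}^\ell}
\end{equation*}
commutes up to the grading shifts already built into~\eqref{eq:Uaction}. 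This gives a natural isomorphism $\Pi^\lambda_{+\und{i}^m,-\und{t}^\ell}\circ F_i^\lambda\cong F_i^{\xi_{+\und{i}^m,-\und{t}^\ell}(\lambda)}\circ\Pi^\lambda_{+\und{i}^m,-\und{t}^\ell}$, and the analogous intertwining with $E_i^\lambda$ then follows by taking right adjoints and invoking the biadjointness established in the first step. The main obstacle is the bookkeeping of grading shifts in the second line of~\eqref{eq:Uaction} when transferring through $\pi_{+\und{i}^m,-\und{t}^\ell}$, since the weight $\alpha_i^\vee(\lambda-\nu)$ at the source differs from $\alpha_i^\vee(\xi_{+\und{i}^m,-\und{t}^\ell}(\lambda)-\nu)$ at the target precisely by a shift that must match up for the intertwining isomorphism to be well-defined; tracking this mirrors the analogous verification carried out in~\cite{vaz2}.
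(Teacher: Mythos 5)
The paper gives no proof of this lemma beyond the one-line citation of~\cite{vaz2}, and your reconstruction follows the expected route in all three parts: biadjointness via the Frobenius property of the cyclotomic algebras (Lemma~\ref{lem:frobKLR}) applied along the surjection $\pi_k$, formal arguments for essential surjectivity, and the observation that $\imath_i$ adds a strand on the right while the special idempotent $p_{+\und{i}^m,-\und{t}^\ell}$ sits on the left, so that $\pi_{+\und{i}^m,-\und{t}^\ell}$ commutes with $F_i$-induction, with the $E_i$-case following by adjunction. You also rightly single out the degree-shift bookkeeping in~\eqref{eq:Uaction} as the one nontrivial check to import from~\cite{vaz2}. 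This is essentially the argument the paper intends.

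One place where the phrasing needs tightening: ``any homomorphism of target modules is already a homomorphism of their pullbacks'' shows fullness of $\Pi_k^\lambda$ only on the essential image of $\res_k^\lambda$, not on all of $R_{D_n}^\lambda(k\alpha_1)\amod$. For arbitrary $M,M'$, surjectivity of $\Hom(M,M')\to\Hom(\Pi_k^\lambda M,\Pi_k^\lambda M')$ amounts to lifting every morphism $M\to M'/M'\!\ker\pi_k$ along the quotient $M'\twoheadrightarrow M'/M'\!\ker\pi_k$; this holds when $M$ is projective but can fail in general, already for $k[x]/(x^2)\twoheadrightarrow k$ with $M=k$ and $M'=k[x]/(x^2)$. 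Since the lemma is applied downstream to projectives, the content is unaffected, but you should either state fullness on projectives or supply the lifting argument explicitly rather than presenting it as purely formal.
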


\medskip

Finally define the functor 
\begin{align*}
\Pi^{\lambda}=\bigoplus_{k\geq 0}\Pi^\lambda_{k} 
&\colon 
 R^\lambda_{D_n}\amod
\to
\bigoplus\limits_{\xi_{+\und{i}^m,-\und{t}^\ell}\in\cD_\lambda}R^{\xi_{+\und{i}^m,-\und{t}^\ell}(\lambda)}_{D_{n-1}}\amod .
\end{align*} 
Summing over $k$ in Lemma~\ref{lem:BR-biadj-BRFull-branch-k-D} 
proves the  case of $\g_n=\so$ in Proposition~\ref{prop:intertwiner}. 
\n Each component $\Pi_{+\und{i}^m,-\und{t}^\ell}$ of $\Pi_k^\lambda$ descends to  
a surjection 
$$K_0(\Pi_{+\und{i}^m,-\und{t}^\ell})\colon K_0(R_{D_n}^\lambda((m+\ell)\alpha_1))\to K_0(R_{D_{n-1}}^{\xi_{+\und{i}^m,-\und{t}^\ell}(\lambda)})$$ 
between the respective Grothendieck groups. 
This surjection intertwines 
the  $\mathfrak{so}_{2n-2}$-action by Lem\-ma~\ref{lem:BR-biadj-BRFull-branch-k-D}.  
The main result of this section now follows by counting dimensions.
\begin{thm}\label{thm:branchrulesD}
Functor $\Pi^{\lambda}$ descends to 
an isomorphism of $\mathfrak{so}_{2n-2}$-representations
\begin{align*} 
K_0(\Pi^{\lambda} )
\colon V^{\so}_{\lambda} \cong K_0 ( R^\lambda_{D_n} )
\xra{\ \ \cong\ \ }
K_0\biggl(\ \bigoplus\limits_{\xi_{+\und{i}^m,-\und{t}^\ell}\in\cD_\lambda}R^{\xi_{+\und{i}^m,-\und{t}^\ell}(\lambda)}_{D_{n-1}}\biggr)
\cong\bigoplus\limits_{\mu}c(\mu)V_{\mu}^{\mathfrak{so}_{2n-2}} ,
\end{align*}  
with $c(\mu)$ the number of $\nu=(\nu_2, \dotsc , \nu_n)$ such that the pair $(\nu,\mu)$ is in  $\tau_{D_n}(\lambda)$.  
\end{thm}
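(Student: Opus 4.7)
The plan is to proceed exactly as in the proof of Theorem~\ref{thm:branchrulesB}, leveraging the structural results already established in this subsection. I would first appeal to Theorem~\ref{thm:KKW} to identify the Grothendieck groups with the relevant irreducible highest-weight representations: $K_0(R^\lambda_{D_n})\cong V^{\so}_\lambda$, and for each admissible $\xi_{+\und{i}^m,-\und{t}^\ell}\in\cD_{D_n,\lambda}$, $K_0(R^{\xi_{+\und{i}^m,-\und{t}^\ell}(\lambda)}_{D_{n-1}})\cong V^{\mathfrak{so}_{2n-2}}_{\xi_{+\und{i}^m,-\und{t}^\ell}(\lambda)}$. Recall that by Corollary~\ref{cor:cycinccD}, the components of $\pi^\lambda$ are indexed exactly by pairs $(\nu,\mu)\in\tau_{D_n}(\lambda)$, so the right-hand side of the claimed isomorphism is tautologically $\bigoplus_\mu c(\mu)\, V^{\mathfrak{so}_{2n-2}}_\mu$ by the definition of $c(\mu)$.

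Next, I would invoke Lemma~\ref{lem:BR-biadj-BRFull-branch-k-D} to guarantee that each component $\Pi^\lambda_{+\und{i}^m,-\und{t}^\ell}$ intertwines the categorical $\mathfrak{so}_{2n-2}$-action. Combined with the fact that each such component is a composition of the algebra surjection $\pi_{+\und{i}^m,-\und{t}^\ell}$ followed by extension of scalars (so that it is right-exact and carries projectives to projectives), this yields that the induced map $K_0(\Pi^\lambda_{+\und{i}^m,-\und{t}^\ell})$ is a surjective homomorphism of $\mathfrak{so}_{2n-2}$-representations. Assembling over all components produces a surjective $\mathfrak{so}_{2n-2}$-equivariant map
\[
K_0(\Pi^\lambda)\colon V^{\so}_\lambda \twoheadrightarrow \bigoplus_{(\nu,\mu)\in\tau_{D_n}(\lambda)} V^{\mathfrak{so}_{2n-2}}_\mu .
\]

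The final step is a dimension count. By the algebraic branching rule~\eqref{eq:brDD}, the decomposition of $V^{\so}_\lambda$ as an $\mathfrak{so}_{2n-2}$-module has total dimension $\sum_\mu c(\mu)\dim V^{\mathfrak{so}_{2n-2}}_\mu$, which is precisely the dimension of the right-hand side. A surjection between finite-dimensional vector spaces of equal dimension is an isomorphism, so $K_0(\Pi^\lambda)$ is the desired isomorphism of $\mathfrak{so}_{2n-2}$-representations.

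The main obstacle is ensuring the bookkeeping step: that the indexing of the components of $\Pi^\lambda$ by admissible lattice maps in $\cD_{D_n,\lambda}$ matches, with the correct multiplicity, the indexing set $\tau_{D_n}(\lambda)$ that controls the classical branching rule. This is handled by Corollary~\ref{cor:cycinccD} together with Definition~\ref{def:admD} and the remark that $\xi_{(\mu,\nu)}$ is a well-defined shorthand for $\xi_{+\und{i}^m,-\und{t}^\ell}$, but one must verify that the surjections are nontrivial on every component and that no $(\nu,\mu)$ is counted twice. Once that correspondence is secured, the argument reduces to comparing dimensions, and the result follows precisely as in the $B_n$ case already treated in Theorem~\ref{thm:branchrulesB}.
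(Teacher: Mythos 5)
Your proposal is correct and follows essentially the same route as the paper: identify Grothendieck groups via Theorem~\ref{thm:KKW}, use Lemma~\ref{lem:BR-biadj-BRFull-branch-k-D} for the $\mathfrak{so}_{2n-2}$-intertwining, observe that $\Pi^\lambda$ is extension of scalars along the surjection $\pi^\lambda$ (hence essentially surjective, hence surjective on $K_0$), and conclude by matching dimensions against the classical branching rule~\eqref{eq:brDD}. The paper's own proof is just the terse phrase ``counting dimensions'' preceded by the surjection/intertwining observation, so you have simply filled in the implicit details; the one phrasing to tighten is that surjectivity of the assembled map comes from $\Pi^\lambda$ being extension of scalars along the surjective $\pi^\lambda$ (Corollary~\ref{cor:cycincD}), not merely from each component being separately surjective.
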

This proves the case of $\g_n=\so$ in Theorem~\ref{thm:branchrules}.

%%%%%%%%%%%%%%%%%%%%%%%%%%%%%%%%%%%%%%%%
%%%                                  %%%
%%%        Type C_n                  %%%
%%%                                  %%%
%%%%%%%%%%%%%%%%%%%%%%%%%%%%%%%%%%%%%%%%
\subsection{The case of quantum $\spp$}\label{ssec:catbranchingC}

Due to the particular form of $\nu$ in the Gelfand-Tsetlin pattern~\eqref{eq:GTpattC} 
for type $C_n$
and the fact that in the case of $\spp$ no positive root $\delta$ satisfies 
$\lambda -\delta = (\lambda_1-1,\lambda_2,\dotsc, \lambda_n)$ 
we need to use a different technique. 
We stress that, although satisfying $\nu_1 \geq \dotsm \geq \nu_n$, the $n$-tuple $\nu$  
is not to be interpreted as a $\spp$-weighest weight here. 
To obtain the categorical branching rule in this case we proceed in two steps. 
As before we start by defining the special classes of idempotents in $R_{C_n}^\lambda$. 
\begin{defn}
We define the idempotents 
$p_{\pm i}$ by
\begin{align*}
p_{-i} &= 
\begin{cases} 
p_{i, i+1, \dotsc , n-1,  n, n-1, \dotsc, 2, 1}  
\\[0.5ex] 
p_{n, n-1, n-2, \dotsc, 2, 1}  
\end{cases} 
& \begin{matrix}  1 < i \leq n, \\[1.2ex] i=n, \end{matrix}
\\[1ex]
\ \ p_{+i} &= p_{i, i-1, i-2, \dotsc, 2, 1}    & 1 \leq i < n. 
\end{align*}
\end{defn}
Notice there is no $p_{-1}$. 
We denote the horizontal composition $p_{\pm j}p_{\pm k}$ by $p_{\pm j,\pm k}$. 
As before and write 
$p_{\varepsilon j, \varepsilon k}=p_{\varepsilon jk}$, whenever $\varepsilon = \pm 1 $.

\begin{defn}
The idempotent 
$e'(p_{-\und{t}^\ell,+\und{i}^m},\und{j})\in R_{B_n}^\lambda(\nu+(\ell+m)\alpha_1)$  
is said to be a \emph{special idempotent} if 
for fixed $c$, the map $\xi_{-\und{t}^\ell,+\und{i}^m}$ is $\lambda c$-admissible.  
We use the notation $e(p_{-\und{t}^\ell,\und{i}^m},\und{j})$ for special idempotents.  
\end{defn}

\medskip

In the following we give the maps between the cyclotomic KLR algebras 
that are necessary to obtain the categorical branching rule.  
We simplify notation and write $\lambda-c = (\lambda_1-c,\lambda_2,\dotsc \lambda_n)$. 
\begin{lem}\label{lem:cycincC} 
For each $c=0,\dotsc ,\lambda_1-\lambda_2$ and $k \geq 0$ there is a surjection of algebras 
\begin{equation*}%\label{eq:cycincC}
\pi_{k}\colon
R^{\lambda-c}_{C_n}(k\alpha_1) \to%\xra{\ \pi_{k}\ }  
\bigoplus\limits_{\xi_{-\und{t}^\ell,+\und{i}^m}\in\cD_{\lambda,c}^{k}}R_{C_{n-1}}^{\xi_{-\und{t}^\ell,+\und{i}^m}(\lambda)}.
\end{equation*} 
\end{lem}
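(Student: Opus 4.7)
The plan is to adapt the argument used for Lemma~\ref{lem:cycincB} and Lemma~\ref{lem:cycincD}, with the observation that the $c$ applications of $\xi_{-1}$ are already built in by working over $R^{\lambda-c}_{C_n}$ rather than $R^\lambda_{C_n}$. Thus we only need to handle the remaining admissible maps $\xi_{-\und{t}^\ell,+\und{i}^m}$ with $t_j>1$, $i_\bullet<n$, for which there is no $p_{-1}$ component in the idempotent.

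First I would reduce the statement to showing that for each $\lambda c$-admissible $\xi_{-\und{t}^\ell,+\und{i}^m}$ there is a surjection
\begin{equation*}
\pi_{-\und{t}^\ell,+\und{i}^m}\colon R^{\lambda-c}_{C_n}((m+\ell)\alpha_1) \to R_{C_{n-1}}^{\xi_{-\und{t}^\ell,+\und{i}^m}(\lambda)}.
\end{equation*}
As in the $B_n$ case, it suffices to isolate the subalgebra
\begin{equation*}
C_{-\und{t}^\ell,+\und{i}^m} = \bigoplus_{\und{r},\und{s}\in\seq\{\alpha_2,\dotsc,\alpha_n\}}
e(p_{-\und{t}^\ell,+\und{i}^m},\und{r})\bigl(R^{\lambda-c}_{C_n}((m+\ell)\alpha_1)\bigr)e(p_{-\und{t}^\ell,+\und{i}^m},\und{s})
\end{equation*}
and show that it projects onto $R_{C_{n-1}}^{\xi_{-\und{t}^\ell,+\und{i}^m}(\lambda)}$.

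Next, following the previous subsections, I would consider the subalgebra $\widetilde{C}_{-\und{t}^\ell,+\und{i}^m}\subset C_{-\und{t}^\ell,+\und{i}^m}$ spanned by diagrams of the ``box on the left times $R^{\lambda-c}_{C_n}(0.\alpha_1)$ diagram on the right'' form, its complement $\widetilde{C}_{-\und{t}^\ell,+\und{i}^m}^\bot$, and the quotient $\widetilde{C}_{-\und{t}^\ell,+\und{i}^m}^\zeta$ by the two-sided ideal forcing $\zeta_{j_1}=\overline{\xi_{-\und{t}^\ell,+\und{i}^m}(\lambda)}_{j_1}$ dots on the first strand of the right-hand block. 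The map sending $p_{-\und{t}^\ell,+\und{i}^m}X\mapsto X$ will identify $\widetilde{C}_{-\und{t}^\ell,+\und{i}^m}^\zeta$ with $R_{C_{n-1}}^{\xi_{-\und{t}^\ell,+\und{i}^m}(\lambda)}$.

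The bulk of the work is then the base cases $(\ell,m)=(1,0)$ and $(\ell,m)=(0,1)$ with $\xi\in\{\xi_{-i}\,(i>1),\xi_{+i}\,(i<n)\}$. For these I would compute, using only the KLR relations~\eqref{eq:R2}--\eqref{eq:dotslides}, the effect of sliding a dot on the leftmost strand through the idempotent boxes $p_{-i}$ and $p_{+i}$. The outcome is of the form $Z_{\pm i}(j,r_j+\epsilon) + \text{terms in }\widetilde{C}_{\pm i}^\bot$, where $\epsilon\in\{0,\pm 1,\pm 2\}$ is dictated by the local portion of the Cartan matrix: the $\pm 1$ shifts arise at the simply laced interactions $j\in\{i,i\pm 1\}$ away from $n$, while the $\pm 2$ shifts appear precisely at the double edge between $n-1$ and $n$ (in particular for $p_{-n}$ with $j=n-1$ and for $p_{+(n-1)}$ with $j=n$). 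Setting $r_i=\overline{\lambda-c}_i$ places the resulting term inside $\widetilde{C}_{\pm i}^\zeta\cong R_{C_{n-1}}^{\xi_{\pm i}(\lambda)}$, yielding the projection $\pi_{\pm i}$ with kernel generated by the $\widetilde{C}_{\pm i}^\bot$ terms.

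Recursively composing these base projections, and then summing over all $\xi_{-\und{t}^\ell,+\und{i}^m}\in\cD_{\lambda,c}^k$ using the canonical decomposition $R^{\lambda-c}_{C_n}((m+\ell)\alpha_1)\twoheadrightarrow \bigoplus C_{-\und{t}^\ell,+\und{i}^m}$, produces $\pi_k$. The main obstacle is simply the local dot-sliding computation at the long root end, where the double-edge version of~\eqref{eq:R2} and~\eqref{eq:R3} must be carefully tracked so that the $\pm 2$ shifts land on the correct index; because the double edge in $C_n$ points the opposite way relative to $B_n$, the roles of $p_{-n}$ and $p_{+(n-1)}$ with respect to the 2-shift are swapped, but the combinatorics of admissibility (built into Definition~\ref{def:admC}) is what guarantees the shifts always match $\overline{\xi_{-\und{t}^\ell,+\und{i}^m}(\lambda)}$, so no new phenomenon appears beyond that in~\cite{vaz2} and in the $B_n$ and $D_n$ cases above.
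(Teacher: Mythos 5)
Your overall architecture matches the paper's: reduce to a single surjection $\pi_{-\und{t}^\ell,+\und{i}^m}$ per $\lambda c$-admissible $\xi$, isolate the subalgebra $C_{-\und{t}^\ell,+\und{i}^m}$, pass to $\widetilde{C}$, $\widetilde{C}^\bot$, $\widetilde{C}^\zeta$, verify the base cases $(\ell,m)\in\{(1,0),(0,1)\}$ by sliding a dot through $p_{\pm i}$, then recurse and sum over $\cD_{\lambda,c}^k$. You also correctly note that the absence of $p_{-1}$ is what forces working over $R_{C_n}^{\lambda-c}$ rather than $R_{C_n}^{\lambda}$.

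However, there is a genuine error in the step you single out as ``the main obstacle.'' You predict that the dot-slide computation produces shifts $\epsilon=\pm 2$ at the double edge, with the roles of $p_{-n}$ and $p_{+(n-1)}$ ``swapped'' relative to type $B_n$. This is not what happens. In the $C_n$ case \emph{all} shifts are $0$ or $\pm 1$; the paper's explicit formulas for $X_{-i}(j,r_j)$ and $X_{+i}(j,r_j)$ never produce a $\pm 2$. The reason is that the shift $\epsilon$ is not read off the Cartan matrix entry at the double edge; it must equal the change in Dynkin label $\overline{\xi_{\pm i}(r)}_j - \overline{r}_j$. In type $B_n$ the dictionary gives $\overline{\lambda}_n = 2\lambda_n$, so $\xi_{\pm n}$ and $\xi_{+(n-1)}$ change $\overline{\lambda}_n$ by $\mp 2$ and $+2$ respectively, which is what forces the $\pm 2$ shifts there. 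In type $C_n$ the dictionary is $\overline{\lambda}_n = \lambda_n$, so the same lattice moves change $\overline{\lambda}_n$ only by $\pm 1$ — in particular $\xi_{-n}$ shifts $\overline{\lambda}_{n-1}$ by $+1$ and $\overline{\lambda}_n$ by $-1$, and $\xi_{+(n-1)}$ shifts $\overline{\lambda}_n$ by $+1$. Your heuristic ``the shift is dictated by the local portion of the Cartan matrix'' is therefore misleading: the Cartan matrix governs the local KLR relation, but the resulting shift must land on the Dynkin-label change, and in type $C_n$ the long root is at the end of the diagram where the $\lambda \leftrightarrow \overline{\lambda}$ conversion does \emph{not} double anything. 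Since you flag exactly this verification as the bulk of the work, and your prediction for it is wrong, the base case has not actually been established; you would need to redo the dot-slide computation at the $(n-1,n)$ edge and confirm that the $\pm 1$ shifts — not $\pm 2$ — come out of the double-edge versions of \eqref{eq:R2} and \eqref{eq:R3}.
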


\begin{proof}
We first prove that for each each 
$\xi_{-\und{t}^\ell,+\und{i}^m}$ $\lambda c$-admissible   
we have a surjection of algebras 
\begin{equation*}%\label{eq:cycincC2}
\pi_{-\und{t}^\ell,+\und{i}^m}\colon
R^{\lambda-c}_{C_n}((\ell+m)\alpha_1) \to%\xra{\ \pi_{-\und{t}^\ell,+\und{i}^m}\ } 
R_{C_{n-1}}^{\xi_{-\und{t}^\ell,+\und{i}^m}(\lambda)} .
\end{equation*} 
To this end it is enough to show that for each 
$\xi_{-\und{t}^\ell,+\und{i}^m}$ as above, 
the subalgebra 
\begin{align*} 
C_{-\und{t}^\ell,+\und{i}^m} = &
\bigoplus\limits_{\und{r},\und{s}\in\seq\{\alpha_2,\dotsc, \alpha_n\}}
e(p_{-\und{t}^\ell,+\und{i}^m},\und{r})
\bigl(R^{\lambda-c}_{C_n}((\ell+m)\alpha_1)\bigr)
e(p_{-\und{t}^\ell,+\und{i}^m},\und{s})
\end{align*} 
projects onto  $R_{C_{n-1}}^{\xi_{-\und{t}^\ell,+\und{i}^m}(\lambda)}$.

Let 
$\widetilde{C}_{-\und{t}^\ell,+\und{i}^m}\subset {C}_{-\und{t}^\ell,+\und{i}^m}$ 
be the subalgebra generated by 
all elements having a representative given by diagrams 
which consist of a horizontal composition 
of $p_{-\und{t}^\ell,+\und{i}^m}$ and a diagram from 
$R^{\lambda-c}_{C_n}(0.\alpha_1)$ 
from left to right, as below
\begin{equation*}
\labellist
\small 
\pinlabel $\lambda$ at -30 95 
\pinlabel $p_{-\und{t}^\ell}$ at 70 88
\pinlabel $p_{+\und{i}^m}$ at 236 88 
\pinlabel $R^{\lambda-c}_{C_n}(0.\alpha_1)$ at 412 90  
\pinlabel $\dotsc$ at 70 145   \pinlabel $\dotsc$ at 415 145 
\pinlabel $\dotsc$ at 70  35   \pinlabel $\dotsc$ at 415  35 
\pinlabel $\dotsc$ at 235 145 
\pinlabel $\dotsc$ at 235  35  
\endlabellist 
\figins{-20.5}{1.1}{ppbox-cycq}
\end{equation*}
Let $\widetilde{C}_{-\und{t}^\ell,+\und{i}^m}^\bot$ be its complement vector space. 
Moreover, let also $\widetilde{C}_{-\und{t}^\ell,+\und{i}^m}^\zeta$ be the quotient of 
$\widetilde{C}_{-\und{t}^\ell,+\und{i}^m}$ 
by the two sided ideal generated by all diagrams of the form 
\begin{equation*}
\labellist
\small 
\pinlabel $\lambda$ at -30 95 
\pinlabel $p_{+\und{i}^m}$ at 236 88 
\pinlabel $p_{-\und{t}^\ell}$ at 68 88
\pinlabel $\dotsc$ at 70 145   \pinlabel $\dotsc$ at 412 90  
\pinlabel $\dotsc$ at 70  35    
\pinlabel $\dotsc$ at 235 145 
\pinlabel $\dotsc$ at 235  35  
\tiny \hair 2pt
\pinlabel $j_1$ at 356  5 
\pinlabel $j_{\ell}$ at 468  6 
\pinlabel $\zeta_{j_1}$ at 335  96  
\endlabellist 
\figins{-20.5}{1.1}{ppA-cycl}
\end{equation*}
where $\zeta=\overline{\xi_{-\und{t}^\ell,+\und{i}^m}(\lambda)}$. 
The algebras $\widetilde{C}_{-\und{t}^\ell,+\und{i}^m}^\zeta$ and $R_{C_n}^{\xi_{-\und{t}^\ell,+\und{i}^m}(\lambda)}$ 
are isomorphic by the map that sends $p_{-\und{t}^\ell,+\und{i}^m}X$ to $X$.

As before it is enough to show the cases of $(\ell,m)=(1,0)$ and $(\ell,m)=(0,1)$, 
since the general case follows easily by recursion.
For this purpose we consider $r=(r_1,\dotsc ,r_{n-1})$ and compute
\begin{equation*}
X_{\pm i}(j,r_j) = \mspace{18mu}
\labellist
\pinlabel $p_{\pm i}$ at   102 63    
\pinlabel $\dotsc$ at  252 60 
\pinlabel $\dotsc$ at  104 17
\pinlabel $\dotsc$ at  104 108
\pinlabel $\lambda$ at -15 95 
\tiny \hair 2pt
\pinlabel $r_j$    at  -6   65
\pinlabel $j$      at 148   -1
\endlabellist
\figins{-30}{0.90}{pcycR2}
\mspace{24mu}
\end{equation*}
where in the case of $+i$  we consider $1\leq i < n$ and in the case of 
$-i$ we take $1 \leq i \leq n$. 
The computation only uses the KLR relations and goes exactly as in~\cite{vaz2}. 
Denote by  
\begin{equation*}
Z_{\pm i}(j,\tilde{r}_j) = \mspace{18mu}
\labellist
\pinlabel $p_{\pm i}$ at  52 63    
\pinlabel $\dotsc$ at  230 60  
\pinlabel $\dotsc$ at   54 17
\pinlabel $\dotsc$ at   54 108
\pinlabel $\lambda$ at -15 95 
\tiny \hair 2pt
\pinlabel $\tilde{r}_j$    at  120   65
\pinlabel $j$      at 133   -1
\endlabellist
\figins{-30}{0.90}{pcycR2j}
\end{equation*}
For $X_{-i}(j,r_j)$ we obtain for $i \leq n-1$
\begin{align*}
X_{-i}(j,r_j) &= 
\begin{cases}
Z_{-j}(j,r_j+1) + \text{ terms in }\widetilde{C}_{\pm i}^\bot \mspace{35mu} & j=i-1,
\\[0,5ex] 
Z_{+j}(j,r_j-1) + \text{ terms in }\widetilde{C}_{\pm i}^\bot & j = i, 
\\[0,5ex] 
Z_{+j}(j,r_j) + \text{ terms in }\widetilde{C}_{\pm i}^\bot & \text{else}, 
\end{cases}
\end{align*}
while for $i=n$ we get 
\begin{align*}
X_{-n}(j,r_j) &= 
\begin{cases}
Z_{-n}(j,r_j+1) + \text{ terms in }\widetilde{C}_{\pm i}^\bot \mspace{35mu} & j = n-1, 
\\[0,5ex] 
Z_{-n}(j,r_j-1) + \text{ terms in }\widetilde{C}_{\pm i}^\bot & j = n, 
\\[0,5ex] 
Z_{-n}(j,r_j) + \text{ terms in }\widetilde{C}_{\pm i}^\bot & \text{else}. 
\end{cases}
\end{align*}
For $X_{+i}(j,r_j)$ we obtain for $1 \leq i < n$
\begin{align*}
X_{+i}(j,r_j) &= 
\begin{cases}
Z_{+i}(j,r_j-1) + \text{ terms in }\widetilde{C}_{\pm i}^\bot & j = i, 
\\[0,5ex] 
Z_{+i}(j,r_j+1) + \text{ terms in }\widetilde{C}_{\pm i}^\bot \mspace{35mu} & j=i+1, 
\\[0,5ex] 
Z_{+i}(j,r_j) + \text{ terms in }\widetilde{C}_{\pm i}^\bot & \text{else}
\end{cases}
\end{align*}
(we leave the details to the reader).

Altogether we get that 
\begin{equation*}
\labellist
\pinlabel $p_{\pm i}$ at   102 63    
\pinlabel $\dotsc$ at  233 60 
\pinlabel $\dotsc$ at  104 17
\pinlabel $\dotsc$ at  104 108
\pinlabel $\lambda$ at -15 95 
\tiny \hair 2pt
\pinlabel $r_j$    at  -6   65
\pinlabel $j$      at 148   -1
\endlabellist
\figins{-30}{0.90}{pcycR2}
\mspace{54mu} = \mspace{24mu}
\labellist
\pinlabel $p_{\pm i}$ at  52 63    
\pinlabel $\dotsc$ at  230 60  
\pinlabel $\dotsc$ at   54 17
\pinlabel $\dotsc$ at   54 108
\pinlabel $\lambda$ at -15 95 
\tiny \hair 2pt
\pinlabel $\overline{\xi_{\pm i}(r)}_j$   at  105   95
\pinlabel $j$      at 133   -1
\endlabellist
\figins{-30}{0.90}{pcycR2jj}
\mspace{42mu}
+\text{ terms in }\widetilde{C}_{\pm i}^\bot 
\end{equation*}
Taking $r_i=\overline{\lambda}_i$ we see that $X_{\pm i}(j,\overline{\lambda}_j)$ consists 
of a sum of a term in 
$\widetilde{C}_{\pm i}^\zeta\cong R_{B_{n-1}}^{\xi_{\pm i}(\lambda)}$ 
with terms in $\widetilde{C}_{\pm i}^\bot$. 
This shows that 
$R_{C_n}^{\lambda-c}(\alpha_1)$ projects onto $R_{C_{n-1}}^{\xi_{\pm i}(\lambda)}$. 
We call this projection $\pi_{\pm i}$.  The kernel of $\pi_{\pm i}$ is the two-sided ideal 
generated by the elements in $\widetilde{C}_{\pm i}^\bot$ involved above.  
Proceeding recursively one gets that $\pi_{-\und{t}^\ell,+\und{i}^m}$ is a surjection of algebras. 
The lemma now follows from the observation that $R_{C_n}^{\lambda-c}((m+\ell)\alpha_1)$ projects canonically onto
$\bigoplus_{\xi_{-\und{t}^\ell,+\und{i}^m}\in\cD_{\lambda,c}^{m+\ell}}{C}_{-\und{t}^\ell,+\und{i}^m}$.  
\end{proof}

Making use of the surjections $\varphi_{\lambda-c}^\lambda\colon R^{\lambda}_{C_n} \to R^{\lambda-c}_{C_n}$
from Subsection~\ref{ssec:cycKLR} 
and summing and over $k$ in Lemma~\ref{lem:cycincC} and over $c$ we have the following 
(the $\varphi$ are to be applied before the $\pi_{-\und{t}^\ell,+\und{i}^m}$s).
\begin{cor}\label{cor:cycincC}
We have a surjection of algebras 
\begin{equation*}
\pi^\lambda \colon
R^\lambda_{C_n} \to%\xra{\ \ \pi^\lambda \ \ } 
\bigoplus\limits_{\xi_{-\und{t}^\ell,+\und{i}^m}\in\cD_{C_n,\lambda}}R_{C_{n-1}}^{ \xi_{-\und{t}^\ell,+\und{i}^m}(\lambda) } .
\end{equation*} 
\end{cor}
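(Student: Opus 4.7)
The plan is to combine Lemma~\ref{lem:cycincC} with the cyclotomic projections $\varphi_{\lambda-c}^\lambda\colon R_{C_n}^\lambda \to R_{C_n}^{\lambda-c}$ from Subsection~\ref{ssec:cycKLR}, summing over both $k$ and $c$. The combinatorial backbone is the decomposition
\begin{equation*}
\cD_{C_n,\lambda}=\bigsqcup_{k\geq 0}\bigsqcup_{c=0}^{\lambda_1-\lambda_2}\cD_{C_n,\lambda,c}^k
\end{equation*}
recorded at the end of Subsection~\ref{ssec:brulesC}, together with the unique factorization $\xi=\xi_{-\und{t}^{\ell-c},+\und{i}^m}\circ\xi_{-1}^c$ for every $\lambda$-admissible $\xi$, where the right factor absorbs the parameter $c$ and the left factor is $\lambda c$-admissible.

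Starting from the decomposition $R_{C_n}^\lambda=\bigoplus_{k\geq 0}R_{C_n}^\lambda(k\alpha_1)$ from~\eqref{eq:decKLR1}, I would observe that $\varphi_{\lambda-c}^\lambda$ preserves the number of $\alpha_1$-strands and thus restricts to a surjection $R_{C_n}^\lambda(k\alpha_1)\to R_{C_n}^{\lambda-c}(k\alpha_1)$ for each $c\in\{0,\dotsc,\lambda_1-\lambda_2\}$ and each $k$. Post-composing with the map $\pi_k$ from Lemma~\ref{lem:cycincC} produces a surjection
\begin{equation*}
\pi_k\circ\varphi_{\lambda-c}^\lambda\colon R_{C_n}^\lambda(k\alpha_1)\to \bigoplus_{\xi\in\cD_{C_n,\lambda,c}^k}R_{C_{n-1}}^{\xi(\lambda)}.
\end{equation*}
Taking the product of these maps over all admissible $c$ reassembles into a map whose target is indexed by $\bigsqcup_c\cD_{C_n,\lambda,c}^k$, and a final direct sum over $k\geq 0$ produces the claimed $\pi^\lambda$ with target $\bigoplus_{\xi\in\cD_{C_n,\lambda}}R_{C_{n-1}}^{\xi(\lambda)}$.

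The main subtlety is verifying that the product (over $c$) of individually surjective maps remains surjective onto the direct sum target. I would handle this by noting that different values of $c$ are distinguished by the action of the cyclotomic relation on the leftmost strand: the special idempotents realizing the factor $\xi_{-1}^c$ force a prescribed number of dots on the first strand, so the block projecting onto $R_{C_{n-1}}^{\xi(\lambda)}$ for $\xi\in\cD_{C_n,\lambda,c}^k$ is supported on idempotents annihilated by $\varphi_{\lambda-c'}^\lambda$ for every $c'\neq c$. This mutual annihilation across different values of $c$ guarantees that the various blocks can be hit independently, yielding surjectivity onto the full direct sum and matching the target $\cD_{C_n,\lambda}$ as identified above.
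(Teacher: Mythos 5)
Your approach matches the paper's: the paper proves the corollary by precomposing the surjections $\pi_k$ of Lemma~\ref{lem:cycincC} with the cyclotomic projections $\varphi_{\lambda-c}^\lambda$ and summing over $k$ and $c$, which is exactly your first two paragraphs. Where you go beyond the paper is in flagging, and attempting to close, the gap that a tuple of surjections need not combine to a surjection onto the direct sum. That is a real gap the paper leaves unstated, and your instinct to appeal to the disjointness of the blocks indexed by $\bigsqcup_c\cD_{C_n,\lambda,c}^k$ (via the orthogonal special idempotents singled out by the factor $\xi_{-1}^c$) is the right mechanism, since the surjections for different $c$ land on idempotent blocks that are already orthogonal inside $\bigoplus_\xi C_\xi$. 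Your phrasing in terms of idempotents being ``annihilated by $\varphi_{\lambda-c'}^\lambda$ for $c'\neq c$'' is a bit imprecise -- the kernels of the $\varphi_{\lambda-c'}^\lambda$ are nested rather than transverse, so the independence you need comes from the orthogonality of the target idempotent blocks rather than from any annihilation on the source -- but the conclusion is correct and the argument is sound once stated that way.
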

This proves the case of $\g_n=\spp$ in Theorem~\ref{thm:cycproj}. 
Moreover, by an adequate choice of the maps $\pi_{-\und{t}^\ell,+\und{i}^m}$, 
we can rephrase Lemma~\ref{lem:cycincC} 
and Corollary~\ref{cor:cycincC}. 
\begin{cor}  \label{cor:cycinccC} 
For each $(\nu,\mu)\in\tau_{B_n}(\lambda)$ there is a surjection of algebras 
\begin{equation*}
\pi_{(\nu,\mu)}\colon
R^\lambda_{C_n} \to%\xra{\ \pi_{(\nu,\mu)}\ } 
R_{C_{n-1}}^{\mu }   
\end{equation*} 
giving rise to a surjection of algebras
\begin{equation*}
R^\lambda_{C_n} \xra{\ \ \pi \ \ } \bigoplus\limits_{(\nu,\mu)\in\,\tau_{C_n}(\lambda)}R_{C_{n-1}}^{\mu} .
\end{equation*} 
\end{cor}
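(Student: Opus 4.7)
The plan is to deduce this corollary as a direct reindexing of Corollary~\ref{cor:cycincC} combined with the cyclotomic surjections $\varphi_{\lambda-c}^\lambda$ from Subsection~\ref{ssec:cycKLR}. The key observation, already recorded at the end of Subsection~\ref{ssec:brulesC}, is that every $\lambda$-admissible map $\xi_{-\und{t}^\ell,+\und{i}^m}$ in $\cD_{C_n,\lambda}$ factors uniquely as $\xi_{-\und{t}^{\ell-c},+\und{i}^m}\circ \xi_{-1}^c$ for some $0\leq c\leq \lambda_1-\lambda_2$, where $\xi_{-\und{t}^{\ell-c},+\und{i}^m}$ is $\lambda c$-admissible. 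This furnishes a bijection between $\cD_{C_n,\lambda}$ and $\tau_{C_n}(\lambda)$ under which the map $\xi$ corresponds to the pair $(\nu,\mu)$ having $\nu_1=\lambda_1-c$ and $\mu=\xi(\lambda)$.

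Given this bijection, for each $(\nu,\mu)\in\tau_{C_n}(\lambda)$ with associated data $(c,\und{t}^{\ell-c},\und{i}^m)$, I would \emph{define}
\[
\pi_{(\nu,\mu)} \;=\; \pi_{-\und{t}^{\ell-c},+\und{i}^m}\circ \varphi_{\lambda-c}^\lambda
\colon R^\lambda_{C_n} \;\longrightarrow\; R^{\lambda-c}_{C_n} \;\longrightarrow\; R_{C_{n-1}}^{\xi_{-\und{t}^{\ell-c},+\und{i}^m}(\lambda)}.
\]
Each factor is surjective---the first by the discussion in Subsection~\ref{ssec:cycKLR}, the second by Lemma~\ref{lem:cycincC}---so $\pi_{(\nu,\mu)}$ is itself a surjection. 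The identification $\xi_{-\und{t}^{\ell-c},+\und{i}^m}(\lambda)=\mu$ coming from the bijection above shows that the target is precisely $R_{C_{n-1}}^\mu$, as required.

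Assembling these maps over all pairs in $\tau_{C_n}(\lambda)$ yields the desired surjection
\[
\pi\colon R^\lambda_{C_n} \;\longrightarrow\; \bigoplus_{(\nu,\mu)\in\tau_{C_n}(\lambda)} R_{C_{n-1}}^{\mu},
\]
and by construction this map agrees, after relabeling summands via the bijection $\cD_{C_n,\lambda}\leftrightarrow\tau_{C_n}(\lambda)$, with the surjection $\pi^\lambda$ produced by Corollary~\ref{cor:cycincC}. Consequently surjectivity of $\pi$ is automatic.

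There is no substantive obstacle to overcome: the statement is a repackaging of Corollary~\ref{cor:cycincC} into the weight-theoretic language natural for the branching rule. The only point demanding care---and the conceptual reason the formula for $\pi_{(\nu,\mu)}$ involves two factors rather than one---is the $C_n$-specific feature emphasized at the start of Subsection~\ref{ssec:catbranchingC}: no positive root of $\spp$ realizes the shift $\lambda\mapsto(\lambda_1{-}1,\lambda_2,\dotsc,\lambda_n)$, so the portion of $\xi_{(\nu,\mu)}$ that decrements $\lambda_1$ cannot be implemented inside a single KLR-algebra projection and must instead be absorbed into the cyclotomic quotient $\varphi_{\lambda-c}^\lambda$. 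Once this bookkeeping is set up, every claim reduces to results already established.
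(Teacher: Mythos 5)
Your proposal is correct and matches the paper's (very brief) argument: the paper simply asserts that Corollary~\ref{cor:cycinccC} is a rephrasing of Corollary~\ref{cor:cycincC} via "an adequate choice of the maps $\pi_{-\und{t}^\ell,+\und{i}^m}$," and your write-up is precisely that reindexing, made explicit through the bijection $\cD_{C_n,\lambda}\leftrightarrow\tau_{C_n}(\lambda)$ and the factorization $\xi=\xi_{-\und{t}^{\ell-c},+\und{i}^m}\circ\xi_{-1}^c$. Your remark that the $\varphi_{\lambda-c}^\lambda$ factor is forced because no $\spp$ positive root implements $\lambda\mapsto(\lambda_1-1,\lambda_2,\dotsc,\lambda_n)$ is exactly the $C_n$-specific point the paper raises at the start of Subsection~\ref{ssec:catbranchingC}.
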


\medskip

Fix a $k\geq 1$ and let 
\begin{align*}
\Pi_k^{\lambda-c}= \ext_k^{\lambda-c}
\colon
R^{\lambda-c}_{C_n}(k\alpha_1)\amod
&\to\   
\bigoplus\limits_{\xi_{-\und{t}^\ell,+\und{i}^m}\in\cD_{\lambda,c}^{k}}R^{\xi_{-\und{t}^\ell,+\und{i}^m}(\lambda)}_{C_{n-1}}\amod 
\\[1ex]
 M &\mapsto M \otimes_{R^\lambda_{C_n}(k\alpha_1)}
\Bigl(\oplus_{\xi_{-\und{t}^\ell,+\und{i}^m}}R^{\xi_{-\und{t}^\ell,+\und{i}^m}(\lambda)}_{B_{n-1}}\Bigr)
\end{align*}
and $\res_k^{\lambda-c}\colon\bigl(\oplus_{\xi_{-\und{t}^\ell,+\und{i}^m}}R^{\xi_{-\und{t}^\ell,+\und{i}^m}(\lambda)}_{B_{n-1}}\bigr)\amod 
\to 
R^{\lambda-c}_{B_n}(k\alpha_1)\amod$ 
be respectively the functors of extension of scalars and restriction of scalars 
by the map $\pi_k$ from Lemma~\ref{lem:cycincC}.

Using the surjections 
$\pi_{-\und{t}^\ell,+\und{i}^m}\colon R_{C_n}^{\lambda-c}(k\alpha_1)\to R_{C_{n-1}}^{\xi_{-\und{t}^\ell,+\und{i}^m}(\lambda)}$ 
for each $\xi_{-\und{t}^\ell,+\und{i}^m}$ that are inherited from the map $\pi_k$  we call 
\begin{align*}
\Pi_{-\und{t}^\ell,+\und{i}^m}^{\lambda-c}\colon R_{C_n}^{\lambda}(k\alpha_1)\amod &\to R_{C_{n-1}}^{\xi_{-\und{t}^\ell,+\und{i}^m}(\lambda)}\amod 
\intertext{and} 
\res_{-\und{t}^\ell,+\und{i}^m}^{\lambda-c}\colon R_{B_{C-1}}^{\xi_{-\und{t}^\ell,+\und{i}^m}(\lambda)}\amod &\to R_{C_n}^{\lambda-c}(k\alpha_1)\amod
\end{align*} 
the \emph{components} of $\Pi_k^\lambda$ and $\res_k^\lambda$.
The following can be proved in the same way as in~\cite{vaz2} for the case of $\sln$.
\begin{lem}\label{lem:BR-biadj-BRFull-branch-k-C}
The functors $\Pi_k^{\lambda-c}$ and $\res_k^{\lambda-c}$ are biadjoint.
The functor $\Pi_k^{\lambda-c}$ is full and essentially surjective
and each $\Pi^{\lambda-c}_{-\und{t}^\ell,+\und{i}^m}$  
intertwines the categorical $\mathfrak{sp}_{2n-2}$-action. 
\end{lem}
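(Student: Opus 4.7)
The plan is to transpose the argument from~\cite{vaz2} for type $A_n$ to the type $C_n$ setting, exploiting the surjections $\pi_{-\und{t}^\ell,+\und{i}^m}$ constructed in Lemma~\ref{lem:cycincC} and the Frobenius property from Lemma~\ref{lem:frobKLR}. The structural input is that each $\Pi^{\lambda-c}_{-\und{t}^\ell,+\und{i}^m}$ is extension of scalars along a surjection of Frobenius algebras, so the necessary adjunction data exist abstractly; the work lies in identifying them with diagrammatic operations so that the intertwining statement becomes transparent.

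First I would treat biadjointness component by component. Restriction of scalars $\res^{\lambda-c}_{-\und{t}^\ell,+\und{i}^m}$ is automatically right adjoint to $\ext^{\lambda-c}_{-\und{t}^\ell,+\und{i}^m}$. For the left adjoint side, I would invoke the Frobenius property of both $R^{\lambda-c}_{C_n}(k\alpha_1)$ and $R^{\xi_{-\und{t}^\ell,+\und{i}^m}(\lambda)}_{C_{n-1}}$ and verify, as in~\cite{vaz2}, that the projection is compatible with the respective Frobenius bilinear forms, up to an explicit grading shift. This promotes the one-sided adjunction to biadjunction. Essential surjectivity is then immediate: any module over the quotient is pulled back along $\pi_k$, and applying $\ext$ recovers it. For fullness I would use the identification $\Hom(P_e,P_{e'}) = e' R e$ on both sides; the claim then reduces to surjectivity of $\pi_k$ between the corresponding idempotent-truncated subalgebras, which is precisely the content of Lemma~\ref{lem:cycincC}, since the elements in the kernel $\widetilde{C}^\bot_{-\und{t}^\ell,+\und{i}^m}$ generate the kernel of the Hom-level map.

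For the intertwining property, I would argue that the surjection $\pi_{-\und{t}^\ell,+\und{i}^m}$ is compatible with the map $\imath_i$ of adding a vertical strand labeled $i\neq 1$ on the right of a diagram. The reason is geometric: the subalgebra $\widetilde{C}_{-\und{t}^\ell,+\und{i}^m}$ consists of diagrams which factor through $p_{-\und{t}^\ell,+\und{i}^m}$ placed at the \emph{left}, and the surjection acts trivially on the rightmost region. Since the categorical action functors $E_i^{\lambda-c}$ and $F_i^{\lambda-c}$ are defined in~\eqref{eq:Uaction} as $\Ind_i$ and $\Res_i$ along $\imath_i$ (up to a shift), compatibility of $\pi_{-\und{t}^\ell,+\und{i}^m}$ with $\imath_i$ gives natural isomorphisms $\Pi^{\lambda-c}_{-\und{t}^\ell,+\und{i}^m}\circ F_i^{\lambda-c}\cong F_i^{\xi(\lambda)}\circ\Pi^{\lambda-c}_{-\und{t}^\ell,+\und{i}^m}$ and similarly for $E_i$, for each $i\in\{2,\dotsc,n\}$. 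This gives the $\mathfrak{sp}_{2n-2}$-intertwining.

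The hard part will be the biadjointness: one needs more than the abstract existence of a Frobenius structure; one must explicitly produce unit and counit 2-morphisms and check the zig-zag identities at the level of diagrams. In type $A_n$ this forces specific grading shifts coming from the symmetric Cartan datum. For type $C_n$ the simple root $\alpha_n$ is longer than the others, so the symmetrizers $d_i$ are non-constant; I expect this to create additional bookkeeping in the shifts appearing in the unit and counit, though the structural argument of~\cite{vaz2} should carry through once these shifts are tracked correctly using the degree formula $\deg(x_{i,\und{i}})=2d_i$.
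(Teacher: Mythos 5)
Your reconstruction follows essentially the same approach as the paper, which simply defers the proof to the type $A_n$ argument in~\cite{vaz2}: restriction is automatically right adjoint to extension, the Frobenius property of the cyclotomic quotients (Lemma~\ref{lem:frobKLR}) upgrades this to biadjointness after tracking shifts, fullness and essential surjectivity come directly from the surjectivity in Lemma~\ref{lem:cycincC} and the identification $\Hom(P_e,P_{e'})=e'Re$, and the intertwining for $i\neq1$ is just compatibility of $\pi_{-\und{t}^\ell,+\und{i}^m}$ with adding strands on the right, as spelled out around Lemma~\ref{lem:cycproj}. Your caveat about the asymmetric symmetrizers $d_i$ in type $C_n$ affecting the grading shifts in the unit/counit is apt but does not alter the structure of the argument.
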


\medskip

Define also the functor 
\begin{align*}
\Pi^{\lambda-c}=\bigoplus_{k\geq 0}\Pi^{\lambda-c}_{k} 
&\colon 
 R^{\lambda-c}_{C_n}\amod
\to
\bigoplus\limits_{\xi_{-\und{t}^\ell,+\und{i}^m}\in\cD_{\lambda,c}}R^{\xi_{-\und{t}^\ell,+\und{i}^m}(\lambda)}_{C_{n-1}}\amod .
\end{align*}

\n Each component $\Pi_{-\und{t}^\ell,+\und{i}^m}^{\lambda-c}$ of $\Pi_k^{\lambda-c}$ descends to  
a surjection 
$$K_0(\Pi_{-\und{t}^\ell,+\und{i}^m}^{\lambda-c})\colon K_0(R_{C_n}^{\lambda-c}((\ell+m)\alpha_1))\to K_0(R_{C_{n-1}}^{\xi_{-\und{t}^\ell+\und{i}^m}(\lambda)})$$ 
between the respective Grothendieck groups
for each $\xi_{-\und{t}^\ell+\und{i}^m}$ in $\cD_{\lambda,c}$. 
Precomposing with the maps $\varphi_{\lambda-c}^{\lambda}$ from Subsection~\ref{ssec:cycKLR}  
for $0\leq c \leq\lambda_1-\lambda_2$ we get surjections 
$$
K_0(\Pi_{-\und{t}^\ell,+\und{i}^m}^{\lambda-c}\ext_{\lambda-c}^\lambda) \colon 
K_0(R_{C_n}^{\lambda}((\ell+m)\alpha_1)) \to K_0(R_{C_n}^{\lambda-c}((\ell+m)\alpha_1))
\to K_0(R_{C_{n-1}}^{\xi_{-\und{t}^\ell+\und{i}^m}(\lambda)})
$$
which intertwine  
the  $\mathfrak{sp}_{2n-2}$-action by Lem\-ma~\ref{lem:BR-biadj-BRFull-branch-k-B} and the 
remarks at the end of Subsection~\ref{ssec:cycKLR}.  
Here 
$\ext_{\lambda-c}^\lambda\colon R_{C_n}^\lambda \to R_{C_n}^{\lambda - c}$
are the functors of extension of scalars by $\varphi_{\lambda-c}^{\lambda}$.

To use $\cD_\lambda$ instead of $\cD_{\lambda,c}$ we define for  $\xi_{-\und{t}^\ell+\und{i}^m}$ in $\cD_\lambda^{m + \ell}$
(see the comments at the end of Subsection~\ref{ssec:brulesC})
\begin{equation*}
\Pi_{-\und{t}^\ell,+\und{i}^m}^{\lambda}=\Pi_{-\und{t}^{\ell-c},+\und{i}^m}^{\lambda}\circ\ext_{\lambda-c}^\lambda
\colon R_{C_n}^{\lambda}((\ell+m)\alpha_1)\amod \to R_{C_{n-1}}^{\xi_{-\und{t}^\ell+\und{i}^m}(\lambda)}\amod .
\end{equation*} 
Finally we sum over $k=\ell+m$ to obtain  
\begin{equation*}
\Pi^{\lambda} = \bigoplus\limits_{\xi_{-\und{t}^\ell,+\und{i}^m}\in\cD_{\lambda}}\Pi_{-\und{t}^\ell,+\und{i}^m}^{\lambda}
\colon 
R_{C_n}^\lambda\amod \to 
\bigoplus\limits_{\xi_{-\und{t}^\ell,+\und{i}^m}\in\cD_{\lambda}}R_{C_{n-1}}^{\xi_{-\und{t}^\ell+\und{i}^m}(\lambda)}\amod .
\end{equation*}
This is more compact way of describing $\Pi^\lambda$ as the composite functor 
\begin{multline*}
\Pi^{\lambda} =\bigoplus_{c=0}^{\lambda_1-\lambda_2}\Pi^{\lambda-c} 
\colon
R^{\lambda}_{C_n}\amod
\xra{ \bigoplus\limits_{c=0}^{\lambda_1-\lambda_2} \ext_{\lambda-c}^\lambda}
\\ \bigoplus_{c=0}^{\lambda_1-\lambda_2} R^{\lambda-c}_{C_n}\amod \xra{\bigoplus\limits_{c=0}^{\lambda_1-\lambda_2}\Pi^{\lambda-c}} 
 \bigoplus_{c=0}^{\lambda_1-\lambda_2} \bigoplus\limits_{\xi_{-\und{t}^\ell,+\und{i}^m}\in\cD_{\lambda,c}}R^{\xi_{-\und{t}^\ell,+\und{i}^m}(\lambda)}_{C_{n-1}}\amod .
\end{multline*} 
Lemma~\ref{lem:BR-biadj-BRFull-branch-k-C} together Lemma~\ref{lem:cycproj} 
proves the case of $\g_n=\spp$ in Proposition~\ref{prop:intertwiner}. 
The main result of this section now follows by counting dimensions.
\begin{thm}\label{thm:branchrulesC}
Functor $\Pi^{\lambda}$ descends to 
an isomorphism of $\mathfrak{sp}_{2n-2}$-representations
\begin{align*} 
K_0(\Pi^{\lambda} )
\colon V^{\spp}_{\lambda} \cong K_0 ( R^\lambda_{C_n} )
\xra{\ \ \cong\ \ }
K_0\biggl(\ \bigoplus_{c=0}^{\lambda_1-\lambda_2}\bigoplus\limits_{\xi_{-\und{t}^\ell,+\und{i}^m}\in\cD_{\lambda,c}}R^{\xi_{-\und{t}^\ell,+\und{i}^m}(\lambda)}_{B_{C-1}}\biggr)
\cong\bigoplus\limits_{\mu}c(\mu)V_{\mu}^{\mathfrak{sp}_{2n-2}} ,
\end{align*}  
with $c(\mu)$ the number of $\nu=(\nu_1, \dotsc , \nu_n)$ such that the pair $(\nu,\mu)$ is in  $\tau_{C_n}(\lambda)$.  
\end{thm}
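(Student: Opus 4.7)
The plan is to apply $K_0$ to $\Pi^{\lambda}$, identify the source and target via Theorem~\ref{thm:KKW}, use Lemma~\ref{lem:BR-biadj-BRFull-branch-k-C} together with Lemma~\ref{lem:cycproj} to get $\mathfrak{sp}_{2n-2}$-equivariance and surjectivity, and then conclude by comparing dimensions to the classical branching rule~\eqref{eq:brCC}. More concretely, Theorem~\ref{thm:KKW} gives $K_0(R^{\lambda}_{C_n})\cong V^{\spp}_{\lambda}$, which by~\eqref{eq:brCC} restricts along $\mathfrak{sp}_{2n-2}\hookrightarrow\spp$ as $\bigoplus_{\mu}c(\mu)V^{\mathfrak{sp}_{2n-2}}_{\mu}$. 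Applying Theorem~\ref{thm:KKW} summand-by-summand to the target and regrouping pairs of $\tau_{C_n}(\lambda)$ by $\mu$ yields the same $\mathfrak{sp}_{2n-2}$-module $\bigoplus_{\mu}c(\mu)V^{\mathfrak{sp}_{2n-2}}_{\mu}$, so the two sides are abstractly isomorphic.

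For the equivariance, recall the factorisation $\Pi^{\lambda}=\bigoplus_c\Pi^{\lambda-c}\circ\ext^{\lambda}_{\lambda-c}$. Each $\ext^{\lambda}_{\lambda-c}$ induces an $\mathfrak{sp}_{2n-2}$-equivariant map on $K_0$ by Lemma~\ref{lem:cycproj}, since the relevant action is that of $E_i^{\lambda},F_i^{\lambda}$ for $i\neq 1$; each component $\Pi^{\lambda-c}_{-\und{t}^{\ell},+\und{i}^m}$ is $\mathfrak{sp}_{2n-2}$-equivariant by Lemma~\ref{lem:BR-biadj-BRFull-branch-k-C}. Composing and summing, $K_0(\Pi^{\lambda})$ is a morphism of $\mathfrak{sp}_{2n-2}$-representations. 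Surjectivity follows because the algebra surjection $\pi^{\lambda}$ of Corollary~\ref{cor:cycincC} makes $\Pi^{\lambda}$ essentially surjective on the categories of projective modules, so every indecomposable projective class in the right-hand $K_0$ lies in the image. A surjective map between finite-dimensional $\bQ(q)$-vector spaces of equal dimension is an isomorphism, and $\mathfrak{sp}_{2n-2}$-equivariance is preserved, giving the theorem.

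The main obstacle is combinatorial bookkeeping rather than homological: one must verify that the disjoint union $\bigsqcup_{c=0}^{\lambda_1-\lambda_2}\cD^{k}_{C_n,\lambda,c}$ indexes the pairs $(\nu,\mu)\in\tau_{C_n}(\lambda)$ bijectively, so that the multiplicity $c(\mu)$ on the right-hand side matches the one dictated by~\eqref{eq:brCC} without any double-counting arising from the intermediate quotients $R^{\lambda-c}_{C_n}$. This is exactly the content of the identification $\xi_{(\mu,\nu)}\leftrightarrow\xi_{-\und{t}^{\ell-c},+\und{i}^m}\circ\xi_{-1}^c$ following Definition~\ref{def:admC}, and it is the reason the type $C_n$ proof requires one more layer -- the $\ext^{\lambda}_{\lambda-c}$ step -- than the arguments for types $B_n$ and $D_n$.
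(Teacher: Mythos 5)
Your proposal is correct and follows essentially the same route as the paper: establish $\mathfrak{sp}_{2n-2}$-equivariance via Lemma~\ref{lem:BR-biadj-BRFull-branch-k-C} (for each $\Pi^{\lambda-c}_{-\und{t}^\ell,+\und{i}^m}$) and Lemma~\ref{lem:cycproj} (for the intermediate $\ext^\lambda_{\lambda-c}$ step), obtain surjectivity on $K_0$ from essential surjectivity of $\Pi^\lambda$, and conclude by comparing dimensions against the classical branching rule~\eqref{eq:brCC}. The paper states this tersely as ``follows by counting dimensions'' after assembling exactly these ingredients; your closing remark on the bijection $\bigsqcup_c\cD^k_{C_n,\lambda,c}\leftrightarrow\tau_{C_n}(\lambda)$ is a useful explication of that dimension count but is not a departure from the paper's argument.
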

This proves the case of $\g_n=\spp$ in Theorem~\ref{thm:branchrules}.

%%%%%%%%%%%%%%%%%%%%%%%%%%%%%%%%%%%%%%%%%%%%%%%%%%%%%%%%%%
\subsection{The cyclotomic quotient conjecture revisited}%
%%%%%%%%%%%%%%%%%%%%%%%%%%%%%%%%%%%%%%%%%%%%%%%%%%%%%%%%%%
\label{ssec:easyBK}

We can no extend the types $B_n$, $C_n$ and $D_n$ the elementary proof of the 
Khovanov-Lauda cyclotomic conjecture given in~\cite{vaz2} for type $A_n$. 
The proof of the following is just an application {\it ipsis verbis} of the arguments 
of Theorem 5.19 in~\cite{vaz2}.
\begin{thm}
For $\g$ a quantum Kac-Moody algebra of classical type 
we have an isomorphism of $\g$-modules
$$K_0(R_{\g}^\lambda) \cong V_\lambda^{\g}.$$
\end{thm}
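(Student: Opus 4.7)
The plan is to proceed by induction on the number $n$ of nodes of the Dynkin diagram $\Gamma_n$ of $\g=\g_n$. The base case is $n=1$, where $\g_1$ is either $\mathfrak{sl}_2$ (for types $B_1=A_1$, $C_1=A_1$) or the trivial Lie algebra $D_1$, and the statement is the known cyclotomic $\mathfrak{sl}_2$ result (or trivial). For the inductive step, assume $K_0(R_{\g_{n-1}}^{\mu})\cong V_{\mu}^{\g_{n-1}}$ as $\g_{n-1}$-modules for every $\g_{n-1}$-dominant highest weight $\mu$.

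First I would produce a surjection $V_\lambda^{\g_n}\twoheadrightarrow K_0(R_{\g_n}^\lambda)$. The categorical $\g_n$-action on $R_{\g_n}^\lambda\amod$ given in~\eqref{eq:Uaction} makes $K_0(R_{\g_n}^\lambda)$ into an integrable $U_q(\g_n)$-module generated by the class $[P_\varnothing^\lambda]$ of the trivial projective, which is annihilated by every $E_i^\lambda$ (the cyclotomic relation forces $E_i^\lambda(P_\varnothing^\lambda)=0$, exactly as in~\cite{vaz2}). Since $P_\varnothing^\lambda$ has weight $\lambda$, the universal property of $V_\lambda^{\g_n}$ yields a surjection of $\g_n$-modules $\Psi^\lambda\colon V_\lambda^{\g_n}\twoheadrightarrow K_0(R_{\g_n}^\lambda)$. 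Thus it remains to prove that $\dim V_\lambda^{\g_n}\leq \dim K_0(R_{\g_n}^\lambda)$ (equivalently, in each weight space).

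The inequality I need will come from the categorical branching rule. Restricting $\Psi^\lambda$ along the embedding $\g_{n-1}\hookrightarrow\g_n$ and composing with the $K_0$-map of $\Pi^\lambda$ from Theorem~\ref{thm:branchrules}, I get a composite
\begin{equation*}
V_\lambda^{\g_n}\xra{\ \Psi^\lambda\ }K_0(R_{\g_n}^\lambda)\xra{\ K_0(\Pi^\lambda)\ }\bigoplus_{(\nu,\mu)\in\tau_{\g_n}(\lambda)}K_0\bigl(R_{\g_{n-1}}^{\xi_{(\nu,\mu)}(\lambda)}\bigr)
\end{equation*}
of surjective $\g_{n-1}$-equivariant maps. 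By the inductive hypothesis the right-hand side is isomorphic to $\bigoplus_{(\nu,\mu)}V_\mu^{\g_{n-1}}$, which by the algebraic branching rule~\eqref{eq:brulsalgg} is exactly $\bigl(V_\lambda^{\g_n}\bigr)^{\g_{n-1}}$. Hence the composite is a surjective $\g_{n-1}$-equivariant endomorphism of a finite-dimensional module with equal source and target dimensions, so it is an isomorphism. Consequently both $\Psi^\lambda$ and $K_0(\Pi^\lambda)$ are isomorphisms, which proves the theorem for $\g_n$.

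The main obstacle, and the reason the three types $B_n$, $C_n$, $D_n$ must be handled together in a single induction, is that the branching rule changes the type (it does not, but it changes the dimension) from rank $n$ to rank $n-1$ within the same family. The argument therefore requires knowing the theorem for \emph{all} dominant highest weights of $\g_{n-1}$, and in particular for all the $\mu=\xi_{(\nu,\mu)}(\lambda)$ that occur; this is exactly what the inductive hypothesis supplies. A small additional subtlety arises in type $C_n$, where $\Pi^\lambda$ is built by first applying the scalar extension $\ext^\lambda_{\lambda-c}$ for $0\leq c\leq \lambda_1-\lambda_2$ and then the branching functors $\Pi^{\lambda-c}$; Lemma~\ref{lem:cycproj} is exactly what is needed to guarantee that the composite is $\g_{n-1}$-equivariant on Grothendieck groups, so that the same dimension-counting argument goes through. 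Apart from these bookkeeping points the proof is identical, \emph{ipsis verbis}, to that of Theorem~5.19 in~\cite{vaz2}.
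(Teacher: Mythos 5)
Your proof is correct and takes essentially the same approach as the paper, which defers the argument \emph{ipsis verbis} to Theorem~5.19 of~\cite{vaz2}: an induction on the rank driven by the surjection of cyclotomic KLR algebras and a dimension count against the algebraic branching rule. One point worth sharpening: you invoke ``the $K_0$-map of $\Pi^\lambda$ from Theorem~\ref{thm:branchrules}'', but that theorem's statement already folds in the identification $V_\lambda^{\g_n}\cong K_0(R_{\g_n}^\lambda)$ that you are in the process of proving; what your argument actually requires are the purely functorial facts that $\Pi^\lambda$ takes projectives to projectives, is essentially surjective, and intertwines the $\g_{n-1}$-action, and those are supplied by Theorem~\ref{thm:cycproj} together with Proposition~\ref{prop:intertwiner}, neither of which depends on the Kang--Kashiwara/Webster result. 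Citing those directly keeps the induction visibly free of circularity.
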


%%%%%%%%%%%%%%%%%%%%%%%%%%%%%%%%%%%%%%%%
%%%                                  %%%
%%%        Cat GT basis              %%%
%%%                                  %%%
%%%%%%%%%%%%%%%%%%%%%%%%%%%%%%%%%%%%%%%%
%%%%%%%%%%%%%%%%%%%%%%%%%%%%%%%%%%%%%%%%%%%%%
\section{The categorical Gelfand-Tsetlin basis}\label{sec:catGT}

Al the results of section 5 in~\cite{vaz2} remain true in the present case. 
To avoid repeting all the arguments we simply state the results and refer to~\cite{vaz2} 
for the proofs. 
To adjust the notation to the present paper we define functors 
\begin{equation*}
E_j^{\xi_{\mu,\nu}^{\mu',\nu'}(\lambda)}, F_j^{\xi_{\mu,\nu}^{\mu',\nu'}(\lambda)}
\colon
R_{n}^{\xi_{(\mu,\nu)}(\lambda)}\amod\to R_{n}^{\xi_{(\mu',\nu')}(\lambda)}\amod, 
\end{equation*}
for $(\mu,\nu)$, $(\mu',\nu')$ in $\tau_{\g_n}(\lambda)$ and $j=2,\dotsc , n$, by
\begin{align*}
E_j^{\xi_{\mu,\nu}^{\mu',\nu'}(\lambda)} &= \Pi^\lambda_{(\mu',\nu')}E_j^{\g_n}\res^\lambda_{(\mu,\nu)}
\intertext{and} 
F_j^{\xi_{\mu,\nu}^{\mu',\nu'}(\lambda)} &= \Pi^\lambda_{(\mu',\nu')}F_j^{\g_n}\res^\lambda_{(\mu,\nu)} .
\end{align*}
These functors are zero unless $(\mu,\nu)=(\mu',\nu')$.  
In this case they coincide with the functors $E_j^{\xi_{(\mu,\nu)}(\lambda)}$ and 
$F_j^{\xi_{(\mu,\nu)}(\lambda)}$ from the structure of categorical $\g_{n-1}$-module 
on $R_{\g_{n-1}}^{\xi_{(\mu,\nu)}(\lambda)}\amod$.

\begin{defn}
For $i\in\{2,\dotsc ,n\}$ we define the functors
\begin{align*}
F_j^{\g_{n-1}} = \bigoplus\limits_{(\mu,\nu)\in\tau_{\g_n}(\lambda)} F_j^{\xi_{\mu,\nu}^{\mu,\nu}(\lambda)}  
\mspace{25mu}\text{and}\mspace{25mu} 
E_j^{\g_{n-1}} = \bigoplus\limits_{(\mu,\nu)\in\tau_{\g_n}(\lambda)} E_j^{\xi_{\mu,\nu}^{\mu,\nu}(\lambda)}  
\end{align*}
with the obvious source and target categories.
\end{defn}

Note that, each of the subcategories $R_{\g_{n-1}}^{\xi_{(\mu,\nu)}(\lambda)}\prmod$ is stable under the action of 
the functors $F_j^{\g_{n-1}}$ and $E_j^{\g_{n-1}}$.

To treat the case of the functors
 $F_1^{\g_n}$ and $E_1^{\g_n}$ 
we proceed as in~\cite{vaz2}. 
Recall that for a projective $M$ in $R^{\xi_{(\mu,\nu)}\lambda}_{\g_{n-1}}\amod$ the module 
$\res^\lambda_{(\mu,\nu)}(M)$ in $R^{\lambda}_{\g_{n}}\amod$ 
which is not projective in general and therefore we cannot be sure that 
the modules 
$\Pi^\lambda_{(\mu',\nu')}F_1^{\g_n}\res^\lambda_{(\mu,\nu)}(M)$ 
and
$\Pi^\lambda_{(\mu',\nu')}E_1^{\g_n}\res^\lambda_{(\mu,\nu)}(M)$ 
are projective in $\bigoplus\limits_{(\mu,\nu)\in\tau_{\g_n}(\lambda)}R^{\xi_{(\mu,\nu)}\lambda}_{\g_{n-1}}\amod$.

To overcome this we define new functors 
\begin{equation*}
{F}_1^{\g_{n-1}},{E}_1^{\g_{n-1}}  
\colon  
\bigoplus\limits_{(\mu,\nu)\in\tau_{\g_n}(\lambda)}R^{\xi_{(\mu,\nu)}\lambda}_{\g_{n-1}}\amod\ 
\to
\bigoplus\limits_{(\mu,\nu)\in\tau_{\g_n}(\lambda)}R^{\xi_{(\mu,\nu)}\lambda}_{\g_{n-1}}\amod
\end{equation*}
by 
\begin{align*}
{F}_1^{\g_{n-1}}(M) = \bigoplus\limits_{(\mu,\nu)\in\tau_{\g_n}(\lambda)} 
 \Pi^\lambda_{(\mu',\nu')}F_1^{\g_n} P(\res^\lambda_{(\mu,\nu)}(M))
\intertext{and} 
{E}_1^{\g_{n-1}}(M) = \bigoplus\limits_{(\mu,\nu)\in\tau_{\g_n}(\lambda)} 
 \Pi^\lambda_{(\mu',\nu')}E_1^{\g_n} P(\res^\lambda_{(\mu,\nu)}(M))
\end{align*}
where $P(-)$ is the projective cover in $R^{\lambda}_{\g_{n}}\amod$.  

\begin{lem}
The pair of (biadjoint) endofunctors $\{ {F}_1^{\g_{n-1}}, {E}_1^{\g_{n-1}} \}$ 
take projectives to projectives and 
define a categorical $\mathfrak{sl}_2$-action on  
$\bigoplus\limits_{(\mu,\nu)\in \tau_{\g_n}(\lambda)}R^{\xi_{(\mu,\nu)}(\lambda)}_{\g_{n-1}}\prmod$. 
\end{lem}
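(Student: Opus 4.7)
The plan is to verify three assertions in sequence: (a) that ${F}_1^{\g_{n-1}}$ and ${E}_1^{\g_{n-1}}$ send projectives to projectives, (b) that on $\bigoplus_{(\mu,\nu)\in\tau_{\g_n}(\lambda)}R^{\xi_{(\mu,\nu)}(\lambda)}_{\g_{n-1}}\prmod$ they form a biadjoint pair, and (c) that the categorical $\mathfrak{sl}_2$-relations hold between them.

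For (a), start with a projective $M$. By definition $P(\res^\lambda_{(\mu,\nu)}(M))$ is a projective cover, hence projective in $R^\lambda_{\g_n}\amod$. The functors $F_1^{\g_n}$ and $E_1^{\g_n}$ defined via~\eqref{eq:Uaction} preserve projectives, since induction along $\imath_1$ of a projective is projective and the restriction used to define $E_1^{\g_n}$ sends projectives to projectives on cyclotomic KLR algebras. Finally each $\Pi^\lambda_{(\mu',\nu')}$ is an extension of scalars along the surjection furnished by Corollaries~\ref{cor:cycinccB},~\ref{cor:cycinccD} and~\ref{cor:cycinccC}, so it preserves projectives as well. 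Stacking these three steps gives (a).

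For (b), the biadjunction is obtained by composing the three biadjunctions at our disposal: $(F_1^{\g_n},E_1^{\g_n})$ from~\eqref{eq:Uaction}, $(\Pi^\lambda,\res^\lambda)$ from Proposition~\ref{prop:intertwiner}, and the adjunction between $P(-)$ and the inclusion of $\prmod$ into $\amod$. Concretely, for $M,N$ projective one checks
\begin{equation*}
\Hom({F}_1^{\g_{n-1}}(M),N)\cong \Hom(M,{E}_1^{\g_{n-1}}(N))
\end{equation*}
by unfolding definitions and moving through the three adjunctions one at a time, using the Frobenius property of Lemma~\ref{lem:frobKLR} to replace $P(\res^\lambda(-))$ by $\res^\lambda(-)$ inside Hom spaces into projectives.

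For (c), the verification is reduced to the Grothendieck group level via Theorem~\ref{thm:KKW}. The maps $K_0({F}_1^{\g_{n-1}})$ and $K_0({E}_1^{\g_{n-1}})$ agree with the action of $F_1,E_1\in U_q(\g_n)$ on $V_\lambda^{\g_n}$ transported through the isomorphism of Theorem~\ref{thm:branchrules}, because $\Pi^\lambda$ is an isomorphism on $K_0$ and $F_1^{\g_n}, E_1^{\g_n}$ categorify the Chevalley generators. The desired categorical decomposition — the canonical isomorphism between ${E}_1^{\g_{n-1}}{F}_1^{\g_{n-1}}$ and ${F}_1^{\g_{n-1}}{E}_1^{\g_{n-1}}$ shifted by identities in the appropriate weight space — is then deduced from the corresponding isomorphism for $(F_1^{\g_n},E_1^{\g_n})$ on $R^\lambda_{\g_n}\prmod$ by pushing through $\Pi^\lambda$ and invoking its fullness and essential surjectivity from Proposition~\ref{prop:intertwiner}. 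The main technical point will be bookkeeping the projective-cover decoration $P(-)$: it is exactly what repairs the failure of $\res^\lambda$ to preserve projectives, and verifying that it commutes (up to isomorphism on projective arguments) with the adjunctions and with $F_1^{\g_n},E_1^{\g_n}$ is the heart of the proof. This is the verbatim transposition of the corresponding argument of~\cite{vaz2} for type $A_n$.
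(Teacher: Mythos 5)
The paper does not actually prove this lemma: it simply states it and, at the start of Section~5, defers all proofs to the type~$A_n$ arguments in~\cite{vaz2}. So there is no proof in this paper to compare against word for word; I can only judge whether your sketch would plausibly close the gap.

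Your step~(a) is fine: extension of scalars along a surjection preserves projectives, $P(-)$ is projective by definition, and $F_1^{\g_n},E_1^{\g_n}$ preserve projectives on the cyclotomic KLR module category by Kang--Kashiwara/Webster, so the composite does too. The problems are in~(b) and~(c). In~(b) you invoke ``the adjunction between $P(-)$ and the inclusion of $\prmod$ into $\amod$''; no such adjunction exists. The inclusion of $\prmod$ into $\amod$ has in general neither a left nor a right adjoint, and the projective-cover assignment is not an adjoint to it. Any correct argument has to go through the Frobenius property (Lemma~\ref{lem:frobKLR}) more seriously than ``replace $P(\res^\lambda(-))$ by $\res^\lambda(-)$ inside Hom spaces'': you would need, for projective (hence injective) $N$, that the surjection $P(\res^\lambda M)\twoheadrightarrow\res^\lambda M$ induces an isomorphism on $\Hom(-,N)$, which fails unless the kernel contributes nothing, and that is precisely what must be argued, not assumed. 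In~(c) you claim the categorical $\mathfrak{sl}_2$-relations are ``deduced \dots\ by pushing through $\Pi^\lambda$ and invoking its fullness and essential surjectivity.'' Full plus essentially surjective is not enough to transport an isomorphism of endofunctors from the source category to the induced endofunctors of the target: one would need at least faithfulness (an equivalence), and the paper only \emph{conjectures} injectivity on objects (Conjecture~\ref{cor:BRinj}). So the transport-of-relations step as stated is not a proof. You do flag the bookkeeping of $P(-)$ as ``the heart of the proof,'' which is the honest thing to say, but that is exactly the part that is being waved at rather than carried out, and it is where the genuine mathematical content lies. Your outline identifies the right ingredients, but as written it has two concrete gaps: the non-existent adjunction in~(b), and the over-strong use of full/essentially surjective in~(c).
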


\begin{lem}
The functors $\{F_j^{\g_{n-1}}, E_j^{\g_{n-1}}\}_{\in \{1,\dotsc ,n\}}$ define a categorical $\g_n$-action on 
$$\bigoplus\limits_{(\mu,\nu)\in\tau_{\g_n}(\lambda)}R^{\xi_{(\mu,\nu)}(\lambda)}_n\amod .$$  
\end{lem}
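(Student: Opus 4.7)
The plan is to verify the relations of a categorical $\g_n$-action by splitting the check into three groups: the relations internal to $\{E_j^{\g_{n-1}}, F_j^{\g_{n-1}}\}_{j\geq 2}$, the $\mathfrak{sl}_2$-relations for the pair $(E_1^{\g_{n-1}}, F_1^{\g_{n-1}})$, and the mixed relations between these two families. The first group holds blockwise: on each summand $R_{\g_{n-1}}^{\xi_{(\mu,\nu)}(\lambda)}\amod$ the functors $E_j^{\g_{n-1}}, F_j^{\g_{n-1}}$ for $j\geq 2$ coincide with the functors $E_j^{\xi_{(\mu,\nu)}(\lambda)}, F_j^{\xi_{(\mu,\nu)}(\lambda)}$ coming from the categorical $\g_{n-1}$-action provided by Theorem~\ref{thm:KKW}, so all biadjunctions, Serre, and commutation relations among the $j\geq 2$ generators hold summand by summand, hence globally. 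The $\mathfrak{sl}_2$-relations for $(E_1^{\g_{n-1}}, F_1^{\g_{n-1}})$ together with the fact that these functors preserve projectives are exactly the content of the preceding lemma.

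For the mixed relations, the strategy is to lift them through the biadjoint pair $(\Pi^\lambda, \res^\lambda)$ and reduce to the categorical $\g_n$-action on $R_{\g_n}^\lambda\amod$ provided by Theorem~\ref{thm:KKW}. Proposition~\ref{prop:intertwiner} gives the intertwining isomorphisms
\begin{equation*}
\Pi^\lambda F_j^{\g_n}\cong F_j^{\g_{n-1}} \Pi^\lambda, \qquad \Pi^\lambda E_j^{\g_n} \cong E_j^{\g_{n-1}} \Pi^\lambda \qquad (j\geq 2),
\end{equation*}
which identify the $j\geq 2$ generators as descents of the corresponding genuine $\g_n$-generators on $R_{\g_n}^\lambda\amod$. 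Combined with the definition $F_1^{\g_{n-1}}(M) = \bigoplus \Pi^\lambda_{(\mu',\nu')} F_1^{\g_n} P(\res^\lambda_{(\mu,\nu)}(M))$ (and the analogue for $E_1$), the required mixed isomorphisms of functors—commutations when the nodes are disjoint in $\Gamma_n$, and the quantum Serre relations when node $1$ is adjacent to node $2$ in $\Gamma_n$—follow from the corresponding $\g_n$-relations in $R_{\g_n}^\lambda\amod$ by pre- and post-composing with the biadjoint pair $(\Pi^\lambda, \res^\lambda)$.

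The main obstacle is the presence of the projective cover $P(-)$ in the definition of the functors $F_1^{\g_{n-1}}, E_1^{\g_{n-1}}$: because $\res^\lambda$ is not exact and does not preserve projectives, the comparison of compositions such as $F_1^{\g_{n-1}} F_j^{\g_{n-1}}$ and $F_j^{\g_{n-1}} F_1^{\g_{n-1}}$ is not a direct consequence of the analogous identity in $R_{\g_n}^\lambda\amod$. The way around this is the observation that $F_j^{\g_{n-1}}$ for $j\geq 2$ preserves each block and, being an exact functor, commutes up to natural isomorphism with taking projective covers of modules in $R_{\g_n}^\lambda\amod$; this allows shifting the $P(-)$ past $F_j^{\g_{n-1}}$ and reduces the mixed compatibilities to genuine identities between biadjoint compositions of $\Pi^\lambda, \res^\lambda$ and the categorical $\g_n$-generators.

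These are precisely the arguments carried out in~\cite{vaz2} for the type $A$ case, and the author's claim at the beginning of Section~\ref{sec:catGT} that \emph{all} results of that section remain true is that the entire proof goes through \emph{ipsis verbis}: the only input that changes from type $A$ to types $B$, $C$, $D$ is the combinatorics of the admissible maps $\xi_{(\mu,\nu)}\in\cD_{\g_n,\lambda}$, while the diagrammatic manipulations in the KLR algebras and the intertwining and adjunction arguments are formally the same. Thus the verification amounts to copying the reasoning of~\cite{vaz2} with $\cD_{\g_n,\lambda}$ in place of $\cD_{A_n,\lambda}$ and the appropriate categorical $\g_n$-action on $R_{\g_n}^\lambda\amod$ in place of the one for $\sln$.
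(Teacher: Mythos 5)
The paper gives no proof of this lemma; it defers entirely to the type~$A$ argument in~\cite{vaz2}, stating at the start of Section~\ref{sec:catGT} that all of the corresponding results carry over. Your proposal captures the correct high-level structure of that argument: blockwise verification for the $j\geq 2$ relations on each summand $R_{\g_{n-1}}^{\xi_{(\mu,\nu)}(\lambda)}\amod$, the preceding lemma for the $j=1$ $\mathfrak{sl}_2$-pair, and lifting the mixed relations through the biadjoint pair $(\Pi^\lambda,\res^\lambda)$ and Theorem~\ref{thm:KKW}.

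The assertion I would push back on is that $F_j^{\g_{n-1}}$ for $j\geq 2$, ``being an exact functor, commutes up to natural isomorphism with taking projective covers.'' Exactness together with preservation of projectives does not yield this: if $P(M)\twoheadrightarrow M$ is a projective cover, the image under such a functor is a surjection from a projective but need not be minimal --- it can acquire superfluous indecomposable projective summands. This is exactly the delicacy introduced by $P(-)$ in the definition of $F_1^{\g_{n-1}}$ and $E_1^{\g_{n-1}}$, and it is the single place where your sketch elides a step that requires an argument specific to the cyclotomic KLR setting (one which the present paper does not supply either, and which presumably occupies the corresponding proof in~\cite{vaz2}). Flag this as the gap; the rest of the outline matches what the paper's deferral implies.
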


\begin{cor}
With the action $E_n^{\g_n}$ and $F_n^{\g_{n}}$ as above the isomorphism $K_0(\Pi^\lambda)$ in  
Theorem~\ref{thm:branchrules} is a surjection of $\g_n$-representations. 
\end{cor}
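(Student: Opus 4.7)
The plan is to reduce everything to what has already been proven. Theorem~\ref{thm:branchrules} asserts that $K_0(\Pi^\lambda)$ is a vector space isomorphism intertwining the categorical $\g_{n-1}$-action, i.e.\ it commutes with $E_j^{\g_{n-1}}$ and $F_j^{\g_{n-1}}$ for $j\in\{2,\dotsc,n\}$. By the previous lemma, extending by the newly defined endofunctors ${E}_1^{\g_{n-1}}$ and ${F}_1^{\g_{n-1}}$ gives a categorical $\g_n$-action on $\bigoplus_{(\mu,\nu)\in\tau_{\g_n}(\lambda)}R^{\xi_{(\mu,\nu)}(\lambda)}_{\g_{n-1}}\amod$. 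Consequently, the only substantive content left is to verify that $K_0(\Pi^\lambda)$ intertwines the actions of $E_1$ and $F_1$ on the two sides.

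To check this, I would trace the definitions at the Grothendieck group level. Let $N$ be a projective $R_{\g_n}^\lambda$-module and set $M=\Pi^\lambda(N)$. By definition,
\begin{equation*}
{F}_1^{\g_{n-1}}(M) = \bigoplus_{(\mu,\nu),(\mu',\nu')}\Pi^\lambda_{(\mu',\nu')}\,F_1^{\g_n}\,P\bigl(\res^\lambda_{(\mu,\nu)}(M)\bigr).
\end{equation*}
Proposition~\ref{prop:intertwiner} gives biadjointness of $\Pi^\lambda$ and $\res^\lambda$, and Theorem~\ref{thm:branchrules} shows $K_0(\Pi^\lambda)$ is an isomorphism; together these imply that $K_0(\Pi^\lambda)\circ K_0(\res^\lambda)$ acts as the identity on the right-hand side. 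On projectives, the projective cover operation is trivial, and for non-projective modules obtained as $\res^\lambda(M)$, the passage to $P(\res^\lambda(M))$ is precisely the normalization that makes the right-hand side land in the category of projectives without altering the image in $K_0$. Hence on classes we obtain
\begin{equation*}
[{F}_1^{\g_{n-1}}(M)] \;=\; K_0(\Pi^\lambda)\bigl[F_1^{\g_n}(N)\bigr],
\end{equation*}
and similarly for $E_1$. Combined with the $\g_{n-1}$-equivariance from Theorem~\ref{thm:branchrules}, this shows $K_0(\Pi^\lambda)$ intertwines the full $\g_n$-action, yielding the desired surjection (in fact isomorphism) of $\g_n$-representations.

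The delicate point is the interaction of the projective cover $P(\res^\lambda(-))$ with the Grothendieck group, since $\res^\lambda$ does not preserve projectivity. This is where the Frobenius property of $R_{\g_n}^\lambda$ (Lemma~\ref{lem:frobKLR}) and the identification of $K_0$ with $V_\lambda^{\g}$ via the $q$-Shapovalov form (Theorem~\ref{thm:KKW}) are crucial: they ensure that biadjunction on the module-category level descends compatibly with the projective cover to the Grothendieck group. Once this compatibility is confirmed, the intertwining is essentially tautological, reflecting the fact that ${E}_1^{\g_{n-1}}$ and ${F}_1^{\g_{n-1}}$ were \emph{defined} to transport $E_1^{\g_n}$ and $F_1^{\g_n}$ along $\Pi^\lambda$ and $\res^\lambda$. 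The argument is uniform across types $B_n$, $C_n$ and $D_n$, since it uses only the general framework of Sections~\ref{sec:catBR} and the biadjunction/isomorphism statements, not type-specific diagrammatic input.
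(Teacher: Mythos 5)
The paper offers no explicit proof for this corollary: it is stated as an immediate consequence of the preceding lemma establishing the $\g_n$-action, so there is nothing to compare line by line. Your proposal supplies the argument in the only natural way, and it is correct in outline: $\g_{n-1}$-equivariance is Theorem~\ref{thm:branchrules}, and intertwining with $E_1,F_1$ is essentially definitional, since ${E}_1^{\g_{n-1}}$ and ${F}_1^{\g_{n-1}}$ are built by conjugating $E_1^{\g_n},F_1^{\g_n}$ by $\Pi^\lambda$ and $\res^\lambda$. (For the identity $\Pi^\lambda\circ\res^\lambda\cong\mathrm{id}$ you do not need Proposition~\ref{prop:intertwiner} or Theorem~\ref{thm:branchrules} at all --- extension of scalars followed by restriction along a surjection of algebras is naturally isomorphic to the identity on the quotient's module category.)

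One step deserves more care than you give it. You assert that passing to the projective cover $P(\res^\lambda(M))$ ``does not alter the image in $K_0$,'' but $K_0$ here is by definition the Grothendieck group of graded \emph{projectives}, and $\res^\lambda(M)$ is generally not projective, so $[\res^\lambda(M)]$ is not a priori an element of $K_0$. What is actually needed is to work in the Grothendieck group of all finitely generated graded modules, show that the kernel of $P(\res^\lambda(M))\twoheadrightarrow\res^\lambda(M)$ has vanishing class there (this is the ``virtual nilpotence'' argument the paper invokes in the proof of Theorem~\ref{thm:GTcat}), and then compare with the projective $K_0$. Your appeal to the Frobenius property of $R^\lambda_{\g_n}$ and the Shapovalov form gestures in this direction but does not actually close the loop; it is the virtual-nilpotence mechanism, not biadjunction per se, that makes the decategorified identity hold. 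With that point repaired, your argument is sound, and it is indeed type-uniform.
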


%%%%%%%%%%%%%%%%%%%%%%%%%%%%%%%%%%%%%%%%%%%%%%%%%%%%%%%%%%%%%%%%%%%%%%%%%%%%%%
\subsection{Classes of special indecomposables and the Gelfand-Tsetlin basis}%
%%%%%%%%%%%%%%%%%%%%%%%%%%%%%%%%%%%%%%%%%%%%%%%%%%%%%%%%%%%%%%%%%%%%%%%%%%%%%%

We now pass to des\-cribe the classes of special indecomposables corresponding to the Gelfand-Tsetlin basis. 
Let $s\in\cS(\lambda)$ be a complete Gelfand-Tsetlin pattern. 
Each such $s$ defines a special idempotent $e(s)$ obtained by horizontal composition of 
the special idempotents defined in Section~\ref{sec:catBR}, each one corresponding to 
a surjection $R_{\g_m}^{\eta}\to R_{\g_{m-1}}^{\eta'}$ as in Theorem~\ref{thm:cycproj}. 
We also define a functor
\begin{equation*}
\res^s \colon
\Bbbk\amod \to R_{\g_n}^\lambda\amod.
\end{equation*}
which is the composite of the restriction functors defined in Section~\ref{sec:catBR} 
corresponding to $s$.

It is not hard to see that the functor $\res^s$ takes the one-dimensional $\Bbbk$-module $\Bbbk$ to the module 
%${}_{{i_{1_1}\dotsm i_{k_1}};{i_{1_2}\dotsm i_{k_2}}; \dotsc ;{i_{1_n}\dotsm i_{k_n}}}\!L$ 
${}_{e(s)}L$ 
which is 
a quotient of the projective $R_{\g_n}^\lambda$-module 
${}_{e(s)}P$. 
This can be given a description in terms of diagrams as we did in type $A_n$.

As in~\cite{vaz2} we define an algebra $\widecheck{R}_{\g_n}^\lambda$ by 
\begin{equation}
\widecheck{R}_{\g_n}^\lambda =  
\bigoplus\limits_{s\in\cS(\lambda)} R_{\g_n}^\lambda/\ker(\pi_s). 
\end{equation}

We have $\Hom_{\widecheck{R}_{\g_n}^\lambda\amod}({}_{e(s)}L,{}_{e(s')}L)=0$ if $s\neq s'$. 
Moreover the module ${}_{e(s)}L$ is projective indecomposable 
as a module over $\widecheck{R}_{\g_n}^\lambda$.

We can continuing adjusting the construction in~\cite{vaz2} and 
define a pair of biadjoint endofunctors $\widecheck{F}_j^{\lambda}$ and $\widecheck{E}_j^{\lambda}$
on  $\widecheck{R}_{\g_n}^\lambda\amod$ that take projectives to projectives 
and 
define a categorical $\g_n$-action on $\widecheck{R}_{\g_n}^\lambda\amod$. 
This is constructed using the appropriate compositions of the various functors 
$\res^\eta_{\eta'}$, $\Pi^\eta_{\eta'}$, $F_j^\eta$, and $E_j^\eta$, and taking projective envelopes as in 
the previous subsection.  
We obtain the following theorem. 

\begin{thm}\label{thm:GTcat}
There is an isomorphism of $\g_n$ representations 
\begin{equation*}
K_0(\widecheck{R}_{\g_n}^\lambda)\xra{\ \ \cong\ \ } V_\lambda^{\g_n} 
\end{equation*} 
taking the projective ${}_{e(s)}L$ to the Gelfand-Tsetlin basis element $\ket{s}$. 
Moreover,
the isomorphism $K_0(\Pi^\lambda)\colon K_0(R_{\g_n}^\lambda)\to V_\lambda^{\g_n}$ 
of Theorem~\ref{thm:branchrules}  sends the projective 
${}_{e(s)}P$ to the Gelfand-Tsetlin basis element $\ket{s}$.
\end{thm}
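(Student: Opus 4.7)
The plan is to follow the type $A_n$ argument of~\cite{vaz2} \emph{ipsis verbis}, with the only new ingredient being the verification that the recursive branching rule proved in Section~\ref{sec:catBR} (for types $B_n$, $C_n$, $D_n$) interacts with the iterated restriction/induction construction in exactly the same formal way as in type $A_n$. Since every Gelfand-Tsetlin pattern $s=(\mu^{(1)},\nu^{(1)},\mu^{(2)},\dotsc)$ is by definition a length-$n$ sequence of pairs $(\nu^{(j)},\mu^{(j+1)})\in\tau_{\g_{n-j+1}}(\mu^{(j)})$, and since Theorem~\ref{thm:cycproj} furnishes, at each level, a surjection $\pi_{(\nu^{(j)},\mu^{(j+1)})}\colon R_{\g_{n-j+1}}^{\mu^{(j)}}\to R_{\g_{n-j}}^{\mu^{(j+1)}}$, each pattern $s$ yields a well-defined iterated surjection $\pi_s\colon R_{\g_n}^\lambda\to R_{\g_0}^\varnothing\cong\Bbbk$ and, dually, the functor $\res^s$ and the special idempotent $e(s)$.

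First I would prove that ${}_{e(s)}L$ is a nonzero projective indecomposable in $\widecheck{R}_{\g_n}^\lambda\amod$. By construction $\widecheck{R}_{\g_n}^\lambda=\bigoplus_{s\in\cS(\lambda)} R_{\g_n}^\lambda/\ker(\pi_s)$, and each summand $R_{\g_n}^\lambda/\ker(\pi_s)\cong\Bbbk$ is a field, so ${}_{e(s)}L$ is the unique simple (and hence projective) module of the $s$-th summand; distinct patterns $s\neq s'$ live in orthogonal summands, which immediately gives $\Hom_{\widecheck{R}_{\g_n}^\lambda\amod}({}_{e(s)}L,{}_{e(s')}L)=0$. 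Consequently the classes $[{}_{e(s)}L]$ for $s\in\cS(\lambda)$ are linearly independent in $K_0(\widecheck{R}_{\g_n}^\lambda)$.

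Next I would establish that the assignment $[{}_{e(s)}L]\mapsto\ket{s}$ extends to an isomorphism of $\g_n$-representations. Linear independence combined with the dimension count $\vert\cS(\lambda)\vert=\dim_{\bQ(q)}V_\lambda^{\g_n}$ (which is exactly the classical Gelfand-Tsetlin statement recalled in Section~\ref{ssec:brulesGT}) shows the map is a $\bQ(q)$-linear isomorphism of underlying vector spaces. To upgrade to $\g_n$-equivariance, for $j\in\{2,\dotsc,n\}$ the functors $\widecheck{F}_j^\lambda,\widecheck{E}_j^\lambda$ are built, via the preceding subsection, from the components $F_j^{\xi_{\mu,\nu}^{\mu',\nu'}(\lambda)}$, which are nonzero only when $(\mu,\nu)=(\mu',\nu')$ and then coincide with the native categorical $\g_{n-1}$-action on $R_{\g_{n-1}}^{\xi_{(\mu,\nu)}(\lambda)}\amod$; iterating Theorem~\ref{thm:branchrules} down the chain matches this with the Gelfand-Tsetlin action of $\{E_j,F_j\}_{j\geq 2}$ described in~\cite{molev}. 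For $j=1$ the functors $\widecheck{F}_1^\lambda,\widecheck{E}_1^\lambda$ are defined by first taking a projective cover, applying $F_1^{\g_n}$ or $E_1^{\g_n}$ upstairs, then pushing down through $\Pi^\lambda$; by construction and the reverse-of-$\Pi^\lambda$ description of the $\g_n$-action on $V_{\cS(\lambda)}$ given after equation~\eqref{eq:brulsalgg}, this matches the Gelfand-Tsetlin action of $E_1,F_1$.

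For the second assertion of the theorem, I would iterate $K_0(\Pi^\lambda)$ from Theorem~\ref{thm:branchrules}: at each step $\Pi^\lambda$ sends the projective ${}_{e(s)}P$ (living in $R_{\g_n}^\lambda\amod$) to the projective attached to the tail of $s$ inside the appropriate $R_{\g_{n-1}}^{\xi_{(\mu^{(1)},\nu^{(1)})}(\lambda)}\amod$ summand; recursion on $n$ then terminates at the $1$-dimensional $\Bbbk$-module corresponding to $\ket{s}$. The main obstacle is the verification that $\widecheck{F}_1^\lambda$ and $\widecheck{E}_1^\lambda$, which necessarily pass through a projective cover and are therefore not manifestly exact/additive on the nose, do descend to the correct Gelfand-Tsetlin operators on classes; as in~\cite{vaz2} this is handled by exhibiting the relevant short exact sequences and reading off the action on $K_0$ via $\gdim\Hom(-,-)=\langle-,-\rangle$ from Theorem~\ref{thm:KKW}, together with Lemma~\ref{lem:cycproj} which ensures the scalar-extension functors $\ext_{\lambda'}^\lambda$ intertwine $E_i,F_i$ for $i\neq 1$ and thereby propagate the identification through every level of the recursion.
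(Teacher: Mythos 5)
Your proof reaches the right conclusions, but it follows a genuinely different route from the paper's, and in doing so it misses the one substantive new ingredient the paper isolates. The paper's proof of the first claim is to transport the $\g_n$-equivariant isomorphism of Theorem~\ref{thm:branchrules} (iterated) from $K_0(R_{\g_n}^\lambda)$ to $K_0(\widecheck{R}_{\g_n}^\lambda)$; the single thing that makes this transport legitimate is the fact that the ideal $\ker(\pi_s)$ is \emph{virtually nilpotent}, which the paper flags as a case-by-case check from the proofs of Lemmas~\ref{lem:cycincB},~\ref{lem:cycincC},~\ref{lem:cycincD}. The second claim then drops out purely formally from the first, via the short exact sequence $0\to\ker(\pi_s)\to{}_{e(s)}P\to{}_{e(s)}L\to 0$ and the vanishing $K_0(\ker(\pi_s))=0$ (again a consequence of virtual nilpotence). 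You instead argue the first claim by a dimension count, and the second by iterating $K_0(\Pi^\lambda)$ down the chain of Gelfand-Tsetlin levels. Both routes work, but the trade-off is as follows: the paper's route via virtual nilpotence carries the $\g_n$-equivariance along for free (since Theorem~\ref{thm:branchrules} is already equivariant), whereas your dimension count gives only a linear isomorphism and the equivariance check for $\widecheck{E}_1,\widecheck{F}_1$ (where a projective cover intervenes and the functor is not obviously exact) is precisely the delicate step you sketch without really carrying out. Conversely, your iterated-$\Pi^\lambda$ argument for the second claim is more hands-on than the paper's s.e.s.\ argument and makes the location of $[{}_{e(s)}P]$ visible, but it leans implicitly on $\pi_s(e(s))$ being nonzero and primitive, which is the same content as virtual nilpotence packaged differently. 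If you want to present your route rigorously, you should state the virtual nilpotence of $\ker(\pi_s)$ explicitly and use it both to justify that the iteration on idempotents does not collapse and to get $[{}_{e(s)}P]=[{}_{e(s)}L]$; at that point the second claim becomes immediate and the first follows by base change along the surjections $R_{\g_n}^\lambda\to R_{\g_n}^\lambda/\ker(\pi_s)$, which is exactly the paper's argument.
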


\begin{proof} 
The first claim follows from Theorem~\ref{thm:branchrules} together with the fact that 
the ideal $\ker(\pi_s)$ is virtually nilpotent   
(this can be seen on a case-by-case check using the proofs of the Lemmas~\ref{lem:cycincB},~\ref{lem:cycincC}, 
and ~\ref{lem:cycincD} together with the diagrammatic description of the categorical projection 
given in Subsection 4.2 of~\cite{vaz}). 
The second claim is a consequence of the first one, since, for the submodule 
$K=\ker(\pi_s)\subset{}_{e(s)}P$,  
we have $K_0(K)=0$ and a s.e.s. 
$0\to K\to {}_{e(s)}P \to {}_{e(s)}L \to 0$.  
\end{proof}

%%%%%%%%%%%%%%%%%%%% End Matter %%%%%%%%%%%%%%%%%%%%

\vspace*{1cm}

%\noindent {\bf Acknowledgements} 

\subsection*{Acknowledgements}
The author thanks Daniel Tubbenhauer for 
numerous helpful comments and suggestions.

%%%%%%%%%%%%%%%%%%%%%%%%%%%%%%%%%%%%%%%%
%%%                                  %%%
%%%            Bibliography          %%%
%%%                                  %%%
%%%%%%%%%%%%%%%%%%%%%%%%%%%%%%%%%%%%%%%%

%%%%%%%%%%%%%%%%%%% end of paper %%%%%%%%%%%%%%%%%%%%%%%%%
\end{document}